\documentclass[11pt]{amsart}
\usepackage{amsmath,amssymb}
\usepackage{verbatim}
\usepackage{color}

\usepackage{graphicx}
%
%
%
%

\newcommand{\Z}{{\mathbb Z}} 
\newcommand{\Q}{{\mathbb Q}}
\newcommand{\R}{{\mathbb R}}
\newcommand{\C}{{\mathbb C}}

\newcommand{\Gal}{\mathrm{Gal}}

\newcommand{\Hom}{\mathrm{Hom}}
\newcommand{\GL}{\mathrm{GL}}
\newcommand{\PGL}{\mathrm{PGL}}
\renewcommand{\mod}{\;\operatorname{mod}}
\newcommand{\ord}{\operatorname{ord}}








\newcommand{\ff}{\frak f}

\newcommand{\fa}{\mathfrak a}

\newcommand{\Ind}{\operatorname{Ind}} 
\newcommand{\Frob}{\operatorname{Frob}} 
 \newcommand{\Res}{\operatorname{Res}} 
 
%

\begin{document}

\newtheorem{thm}{Theorem}[section]
\newtheorem{theorem}{Theorem}[section]
\newtheorem{lem}[thm]{Lemma}
\newtheorem{lemma}[thm]{Lemma}
\newtheorem{prop}[thm]{Proposition}
\newtheorem{proposition}[thm]{Proposition}
\newtheorem{cor}[thm]{Corollary}
\newtheorem{defn}[thm]{Definition}
\newtheorem*{remark}{Remark}
\newtheorem{conj}[thm]{Conjecture}

\title{Pair arithmetical equivalence for quadratic fields}





\author{Wen-Ching Winnie Li}
\address{Department of Mathematics, Pennsylvania State University,
University Park, PA 16802, USA}
\email{wli@math.psu.edu}

\author{Zeev Rudnick}
\address{School of Mathematics, Tel Aviv University,
Tel Aviv 69978, Israel}
\email{rudnick@tauex.tau.ac.il}

\thanks{The research of Li is partially supported by Simons Foundation grant \# 355798.  Z.R received funding from the European Research Council (ERC) under the European Union's  Horizon 2020 research and innovation programme  (Grant agreement No.    786758).}

\date{\today}

\begin{abstract}
Given two nonisomorphic number fields $K$ and $M$, and  finite order Hecke characters $\chi$ of $K$ and $\eta$ of $M$ respectively, we say that  the     pairs $(\chi, K)$ and $(\eta, M)$ are   arithmetically equivalent  if the associated   L-functions coincide: 
$$
L(s, \chi, K) = L(s, \eta, M) .
$$
When the characters are trivial, this reduces to the question of fields with the same Dedekind zeta function, investigated by Gassmann in 1926, who found such fields of degree 180, and by Perlis (1977) and others, who showed that there are no nonisomorphic fields of degree less than $7$. 
We construct infinitely many such pairs  where  the fields are quadratic. This gives dihedral automorphic forms induced from characters of  different quadratic  fields. 
We also give a classification of such characters of order $2$  for the quadratic fields of our examples, all with odd class number.

\end{abstract}

\keywords{L-functions \and Arithmetic equivalence of number fields \and  Dihedral modular forms \and Idele class characters}
\subjclass{11R42, 11F80, 11F11}

\maketitle

\section{Introduction}
\subsection{Arithmetic equivalence of fields}
Two number fields $K$ and $L$ are {\em arithmetically equivalent} if their Dedekind zeta functions coincide: $\zeta_K(s) = \zeta_L(s)$.  
A field is {\em arithmetically solitary} if it is isomorphic to any field with the same Dedekind zeta function. Examples are normal extensions of the rationals. 
The first non-solitary fields were found in 1926 by Gassmann \cite{Gassmann}, who discovered a pair of non-isomorphic fields of degree $180$ which are arithmetically equivalent. Perlis \cite{Perlis} showed that all the fields $K$ with $[K:\Q]\leq 6$ are arithmetically solitary, and constructed a non-solitary field of degree $7$.

A variant for Artin L-functions was investigated by Kl\"{u}ners and Nicolae \cite{KN}. For $j=1,2$ let $K_j/\Q$  
be a finite Galois extension, and let $\chi_j$ be a faithful character of the Galois group $G_j = \Gal(K_j/\Q)$. If the corresponding Artin L-functions coincide: 
$L(s,\chi_1,K_1/\Q)=L(s,\chi_2,K_2/\Q)$,  
then $K_1=K_2$ and $\chi_1=\chi_2$. They also showed that if the base field is not the rationals, this need not be true. 
For other variations on this theme, see \cite{Cor}, \cite{CdSLMS}, \cite{Prasad}, \cite{MS2019}, \cite{Sutherland}.

\subsection{Arithmetic pair equivalence}

In this paper we consider a different variant of field arithmetical equivalence, which we call arithmetic pair equivalence. 
For a number field $K$ denote by $G_K$ the absolute Galois group $\Gal(\bar \Q/K)$.  
Given two nonisomorphic number fields $K$ and $M$, let $\chi$ and $\eta$ be two finite order characters of $G_K$ and $G_M$ respectively.  Two pairs $(\chi, K)$ and $(\eta, M)$ are called {\em arithmetically equivalent}  if the associated  L-functions coincide: 
$$
L(s, \chi, K) = L(s, \eta, M) .
$$

 An immediate consequence of pair arithmetical equivalence is that the two fields $K$ and $M$ must have the same degree over $\Q$. Clearly, pair arithmetical equivalence reduces to field arithmetical equivalence when both characters are trivial.

The problem that we study here is the existence of such pairs, when the base field $K$ is a quadratic extension of the rationals. 
More precisely, given a quadratic extension $K$ of $\Q$, we wish to find a nontrivial  finite order character $\chi$ of $G_K$ so that the pair $(\chi, K)$ is arithmetically equivalent to another pair $(\eta, M)$.

For instance we take the imaginary quadratic field $K=\Q(\sqrt{-1})$ and the real quadratic field $M=\Q(\sqrt{q})$ where $q$ is any prime satisfying  $q=1\bmod 8$, and construct quadratic characters $\chi$ of $K$ and $\eta$ of $M$ such that $L(s,\chi,K) = L(s,\eta,M)$ 
(see \S~\ref{sec:holomorphic forms}).

\subsection{Connection with dihedral modular forms} 
 
We now connect with the theory of automorphic forms.  
Recall that  by class field theory, finite order characters of the Galois group $G_K$ may be identified with finite order characters of the idele class group of $K$ (Hecke characters),    and we shall freely identify the two.   
For $K$ quadratic over $\Q$, and a Hecke  character  $\chi$  of $K$, 
there is a unique normalized automorphic Hecke-eigenform $g_\chi$ of $GL(2)$ over $\Q$, which is cuspidal if $\chi$ is not self-conjugate, with associated L-function $L(s, g_\chi) = L(s, \chi, K)$. It corresponds to the two-dimensional dihedral representation $\rho_\chi:=\Ind_{G_K}^{G_\Q} \chi$ of $G_\Q$.  
We call $\rho_\chi$ odd if it has eigenvalues $\pm 1$ at the complex conjugation $c$ in $G_\Q$, otherwise it is called even, in which case $\rho_\chi(c) = \pm Id$. The newform $g_\chi$ is holomorphic of weight one if $\rho_\chi$ is odd, and is a Maass form with Laplacian eigenvalue $1/4$ if $\rho_\chi$ is even. 

In terms of Fourier expansions, $g_\chi$ is given as follows: 
Assume $\chi$ is not self-conjugate. For $\rho_\chi$ odd, the holomorphic weight one cusp form $g_\chi$  is 
\[
g_\chi(z) = \sum_{\mathfrak a} \chi(\mathfrak a)  e^{2\pi i N(\mathfrak a) z} ,
\]
 summing  over all integral ideals $\mathfrak a$ of $K$ coprime to the conductor 
  of $\chi$,  where $z=x+iy$  with $x, y \in \R$ and  $y>0$ and $N(\mathfrak a)$ is the norm of $\mathfrak a$. 
For $\rho_\chi$ even, the Fourier expansion of the Maass cusp form $g_\chi$ also involves  the $K$-Bessel function $K_0$: 
\[
g_\chi(z) = \sum_{\mathfrak a} \chi(\mathfrak a) \sqrt{y}K_0(2\pi N(\mathfrak a) y) 2 \begin{cases} \cos(2\pi N(\mathfrak a) x),& {\rm if~} \rho_\chi(c) = Id; \\ \sin(2\pi N(\mathfrak a)x),& {\rm if~} \rho_\chi(c) = -Id .\end{cases} 
\]
 
Therefore, if $(\chi, K)$ is arithmetically equivalent to $(\eta, M)$ for quadratic extensions $K$ and $M$, then the modular form 
$g_\chi = g_\eta$ arises from  Hecke  characters of two different fields. An example was first found by Hecke, see \cite[page 243]{Serre}. In  section \S~\ref{sec:Dihedral forms}  we exhibit examples of this phenomenon.

From the viewpoint of Galois representations, Rohrlich studied such examples, called ``Hecke-Shintani representations'' in \cite{Rohrlich1}, in the course of deriving 
an asymptotic formula for the number of isomorphism classes of two-dimensional irreducible monomial representations of $G_\Q$ of bounded conductor.

\subsection{The method }
 
We interpret the equality $$L(s, \chi, K) = L(s, \eta, M)$$ on L-functions of characters as the equality $$L(s,\Ind_{G_K}^{G_\Q} \chi) = L(s, \Ind_{G_M}^{G_\Q} \eta)$$ on L-functions of induced degree-two representations $\Ind_{G_K}^{G_\Q} \chi$ and $\Ind_{G_M}^{G_\Q} \eta$ of the Galois group $G_{\Q}$. This then converts the problem on pair arithmetical equivalence to a problem on equivalence of induced representations. Our first main result, Theorem \ref{arithequiv} in \S2, gives a criterion for pair arithmetical equivalence in terms of the character involved. 

\begin{thm} \label{thm:sufficientcond1}
Let $K$ be a quadratic extension of $\Q$ with $\Gal(K/\Q) = \langle c \rangle$. Suppose a finite order character $\chi$ of $G_K$ is not equal to its conjugate $\chi^c$. Then the pair $(\chi, K)$ is arithmetically equivalent to another pair if and only if $\chi^c = \chi \cdot \delta$ for a quadratic character $\delta$ of $G_K$.
\end{thm}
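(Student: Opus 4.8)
\emph{Proof idea.} The plan is to translate the equality of $L$-functions into an isomorphism of Galois representations. Put $\rho_\chi:=\Ind_{G_K}^{G_\Q}\chi$; the hypothesis $\chi\ne\chi^c$ makes $\rho_\chi$ irreducible by Mackey's criterion, and $L(s,\chi,K)=L(s,\rho_\chi)$. Since an Artin $L$-function is determined by, and determines, the semisimplification of its representation (compare Dirichlet coefficients and apply Chebotarev's theorem --- no analytic continuation is needed), a pair $(\eta,M)$ with $M\not\cong K$ is arithmetically equivalent to $(\chi,K)$ precisely when $M$ is a quadratic field (the two fields share the same degree over $\Q$) and $\Ind_{G_M}^{G_\Q}\eta\cong\rho_\chi$. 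So the theorem reduces to: $\rho_\chi$ is induced from a quadratic field $M\ne K$ if and only if $\chi^c=\chi\delta$ for some quadratic character $\delta$ of $G_K$.

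The main step is the classical dictionary relating ``$\rho_\chi$ is induced from $M$'' to ``$\rho_\chi$ is fixed under twisting by the quadratic character $\delta_M$ of $G_\Q$ that cuts out $M$''. One direction is the projection formula $\Ind_{G_M}^{G_\Q}\eta\otimes\delta_M=\Ind_{G_M}^{G_\Q}\eta$; for the other, an intertwiner realizing $\rho_\chi\otimes\delta_M\cong\rho_\chi$ commutes with $\rho_\chi|_{G_M}$, so Schur's lemma forces $\rho_\chi|_{G_M}$ to decompose as $\eta_1\oplus\eta_2$ with $\eta_1\ne\eta_2$ (it cannot be scalar, else $\rho_\chi(G_\Q)$ would be abelian and $\rho_\chi$ reducible), and then $\rho_\chi=\Ind_{G_M}^{G_\Q}\eta_1$ by Frobenius reciprocity. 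Substituting $\rho_\chi=\Ind_{G_K}^{G_\Q}\chi$ and using that $\Ind_{G_K}^{G_\Q}\psi\cong\Ind_{G_K}^{G_\Q}\psi'$ iff $\psi'\in\{\psi,\psi^c\}$, I obtain the usable criterion: $\rho_\chi\otimes\delta_M\cong\rho_\chi$ exactly when $\operatorname{res}_{G_K}\delta_M\in\{1,\chi^c/\chi\}$.

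Granting this, the ``only if'' direction is short: if $(\chi,K)$ is equivalent to $(\eta,M)$ with $M\ne K$, then $\delta_M\notin\{1,\delta_K\}$, so $\operatorname{res}_{G_K}\delta_M\ne1$, hence $\operatorname{res}_{G_K}\delta_M=\chi^c/\chi$; being the restriction of the quadratic character $\delta_M$, this is a quadratic character $\delta$ of $G_K$ (nontrivial because $\chi\ne\chi^c$) with $\chi^c=\chi\delta$. For ``if'', suppose $\chi^c=\chi\delta$ with $\delta$ quadratic. Then $\delta=\chi^c/\chi$ and $\delta^c=\delta^{-1}=\delta$, so $\delta$ is $\Gal(K/\Q)$-invariant; since $H^2(\Gal(K/\Q),\C^\times)=0$, the inflation--restriction sequence shows $\delta$ extends to a character $\delta_0$ of $G_\Q$. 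Now $\rho_\chi\otimes\delta_0=\Ind_{G_K}^{G_\Q}(\chi\delta)=\Ind_{G_K}^{G_\Q}\chi^c\cong\rho_\chi$, so comparing determinants of these two-dimensional representations forces $\delta_0^2=1$. Hence $\delta_0$ is a quadratic character of $G_\Q$, nontrivial and distinct from $\delta_K$ (as $\operatorname{res}_{G_K}\delta_0=\delta\ne1$); letting $M$ be its fixed field, a quadratic field $\ne K$, the criterion of the previous paragraph supplies $\eta$ with $\rho_\chi=\Ind_{G_M}^{G_\Q}\eta$, whence $(\chi,K)$ is equivalent to the pair $(\eta,M)$.

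The one genuinely delicate point --- and the place I would be careful --- is this last extension step: a $\Gal(K/\Q)$-invariant quadratic character of $G_K$ need not extend to $G_\Q$ as a \emph{quadratic} character, as its extensions can all have order $4$ (this happens exactly when a certain associated quartic field is cyclic over $\Q$), and then no quadratic $M$ appears. What rescues the argument is that $\delta=\chi^c/\chi$ is not arbitrary: the twist relation $\rho_\chi\otimes\delta_0\cong\rho_\chi$ it produces pins down the determinant and forces $\delta_0^2=1$. Everything else is the assembly of standard ingredients --- Mackey's irreducibility criterion, the projection formula, Schur's lemma, Frobenius reciprocity, and Chebotarev's density theorem.
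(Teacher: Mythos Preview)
Your proof is correct and takes a genuinely different route from the paper's.

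For the necessity direction, the paper applies Mackey's double-coset formula and Frobenius reciprocity twice to compute $[\Ind_K^\Q\chi,\Ind_M^\Q\eta]_{G_\Q}$ directly, obtaining the criterion $\chi|_{G_{KM}}=\eta|_{G_{KM}}$ and reading off $\chi^{-1}\chi^c=\delta_{KM/K}$. You instead pass through the twist characterization $\rho_\chi\otimes\delta_M\cong\rho_\chi$ via the projection formula, then translate this to $\operatorname{res}_{G_K}\delta_M\in\{1,\chi^c/\chi\}$ using the classification of characters inducing the same representation. Both arguments are short; yours has the advantage of making the converse symmetric.

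For sufficiency the contrast is sharper. The paper devotes \S2.2--2.5 to a structural analysis of the image of $\rho_\chi$: it shows the center is $\ker\delta$, that $G/\ker\delta$ is the Klein four-group, and hence that the fixed field of $\ker\delta$ is biquadratic over $\Q$, furnishing a quadratic $M\ne K$ directly; it then constructs $\eta$ by an explicit change of basis diagonalizing $\rho(c)$. Your argument is much lighter: you observe $\delta^c=\delta$, extend $\delta$ to $G_\Q$ via $H^2(\Z/2\Z,\C^\times)=0$ (this is the paper's Proposition~\ref{chi=chic}, which they quote rather than prove), and then---and this is the elegant step---force the extension $\delta_0$ to be quadratic by comparing determinants in $\rho_\chi\otimes\delta_0\cong\rho_\chi$. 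The paper's longer route pays for itself later: the structural results of Theorems~\ref{centerofG} and~\ref{structureofG} (parity of $m$, the explicit conjugation action, the dichotomy for $H$) are used in the subsequent classification of characters. Your argument proves exactly the theorem and no more, which is appropriate here; you also make explicit the passage from $L$-function equality to representation isomorphism via Chebotarev, which the paper leaves implicit.
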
   
 We note that Rohrlich \cite{Rohrlich2} gave several criteria for non-arithmetically solitary pairs from the perspective of Galois representations.

In view of Theorem~\ref{thm:sufficientcond1}, our problem becomes one of finding 
finite order characters $\chi$ of $G_K$ satisfying
\begin{equation}\label{delta1}
\chi^c = \chi \cdot \delta
\end{equation}
for some quadratic character $\delta$ of $G_K$. From the viewpoint of idele class characters of $K$, a character $\mu$ is self-conjugate, that is, $\mu = \mu^c$, if and only if it comes from an idele class character of $\Q$ by composing with the norm map, in other words, it comes from base change. Therefore it suffices to consider finite order idele class characters $\chi$ of $K$ up to base change. As explained in \S3.3, we may further assume that the order of $\chi$ is a power of $2$.

We construct such $\chi$ for imaginary quadratic fields $\Q(\sqrt{-p})$ where $p$ is prime, $p=3\bmod 4$ or $p=2$, and also for $\Q(\sqrt{-1})$, and for the real quadratic fields $\Q(\sqrt{q})$ where $q=1\bmod 4$ is prime or $q=2$.   A key feature of these fields is that they have odd  class number. We also classify  
quadratic idele class characters $\chi$ up to base change for these quadratic extensions $K$ and determine their conductors. Note that for $\chi$ quadratic, $\chi \ne \chi^c$ if and only if $\chi$ and $\chi^c$ differ by a quadratic character; therefore we have classified, for such $K$, all pairs $(\chi, K)$ with quadratic $\chi$ arithmetically equivalent to another pair. The results for imaginary quadratic $K$ are given in Theorems \ref{2-classification} and \ref{2-Q(i)}.
The parallel result for real quadratic fields    is given in Theorem \ref{2-classification-Kreal}.

This paper is organized as follows. The main purpose of \S2 is to prove Theorem \ref{thm:sufficientcond1}. Proposition~\ref{Mackey} in \S2.1 establishes the necessity using Mackey's theory. \S2.2-\S2.5 explores the structure of the Galois group of a degree-two dihedral representation induced from a character $\chi$ satisfying (\ref{delta1}). Among other things, it provides information on the quadratic character $\delta$, and leads to the proof of sufficiency in \S2.6. An idele class character $\chi$ of $K$ of finite order gives rise to a primitive multiplicative character $\xi(\chi)$ of the quotient of the ring of integers $\mathbb Z_K$ of $K$ by the conductor $\mathfrak f_\chi$ of $\chi$. In \S3.1 we investigate the lifting problem: Given a nonzero integral ideal $\mathfrak a$ of $\mathbb Z_K$ and a primitive character $\xi$ of $(\mathbb Z_K/\mathfrak a)^\times$, when does $\xi= \xi(\chi)$ come from an idele class character $\chi$ of $K$? For quadratic fields $K$ with odd class number as specified above and $\xi$ of order a power of $2$, we obtain an easy condition, Corollary \ref{quadratic}, which is repeatedly used in the paper. \S3.2-\S3.3 concerns base change and reduction to characters of order powers of $2$, while \S3.4 describes possible conductors for such characters. With this information, in \S4 we characterize quadratic characters $\chi$ up to base change for quadratic fields $K$ specified above and determine their conductors, with imaginary fields in \S4.1 and real fields in \S4.2. Finally, in \S5 explicit examples of infinite families of cusp forms induced from quadratic characters of different quadratic fields are exhibited; holomorphic weight one forms are given in \S5.1, and Maass forms with different infinity type in \S5.2.    
 
\section{A criterion for pair arithmetical equivalences for quadratic extensions}

\subsection{A representation theoretic viewpoint of pair arithmetical equivalence}

Given a quadratic field $K/\Q$ and a finite order character $\chi$ of $G_K = \Gal(\bar \Q/K)$, we are interested in knowing whether there is another quadratic extension $M/\Q$ and a finite order   character $\eta$ of $G_M$ such that the two Artin $L$-functions agree:
$$ L(s, \chi, K) = L(s, \eta, M).$$
Induce the degree-one representation $\chi$ of $G_K$ to a degree-two representation $\Ind_K^{\Q}\chi = \Ind_{G_K}^{G_{\Q}} \chi$ of $G_{\Q}$. Since the Artin $L$-function is invariant under induction, we have
$$L(s, \chi, K) = L(s, \Ind_K^{\Q}\chi).$$
Similarly, for the pair $(\eta, M)$ we have the degree-two induced representation $\Ind_M^{\Q}\eta$, and
$$L(s, \eta, M) = L(s, \Ind_M^{\Q}\eta).$$
Hence if the two degree-$2$ representations are equivalent  
\begin{equation}\label{two equivalent}
 \Ind_K^{\Q}\chi \simeq \Ind_M^{\Q}\eta,
 \end{equation} 
then we have the desired equality
$$ L(s, \chi, K) = L(s, \eta, M).$$

This converts the question on arithmetical equivalence of pairs to a question on equivalence of representations. Suppose $$\Gal(K/\Q) = \langle c \rangle \qquad  {\rm and} \qquad \Gal(M/\Q) = \langle  \tau \rangle.$$ Then $c$ acts on $G_K$ by conjugation, which in turn defines the conjugate character $\chi^c$ by $\chi^c(h) = \chi(chc^{-1})$ for $h \in G_K$. Similarly define the conjugate $\eta^{\tau}$ of $\eta$.
In order that $\chi$ and $\eta$ induce irreducible representations of $G_{\Q}$, it is necessary and sufficient that the two characters are not self-conjugate, namely $\chi^c \ne \chi$ and $\eta^{\tau} \ne \eta$. 

\begin{prop}\label{Mackey} Suppose $\chi^c \ne \chi$ and $\eta^{\tau} \ne \eta$. Then  
\eqref{two equivalent} holds if and only if the restrictions to the subgroup $G_K\cap G_M= G_{KM}$ coincide:
\begin{equation}\label{restriction cond}
 \chi|_{G_K\cap G_M}  =  \eta|_{G_K\cap G_M}.   
\end{equation}
Moreover, we have
\[
\chi^{c} = \chi \cdot  \delta_{KM/K}, \quad \eta^\tau = \eta \cdot \delta_{KM/M}. 
\]
Here $\delta_{KM/K}$ and $\delta_{KM/M}$ are the quadratic characters of $\Gal(KM/K)$ and $\Gal(KM/M)$, respectively.
\end{prop}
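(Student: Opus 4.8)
The plan is to analyze the two degree-two representations by restricting them to the common open subgroup $G_{KM}=G_K\cap G_M$ and then comparing characters; the only input from Mackey's theory is the restriction formula for the normal index-two subgroup $G_K\subset G_\Q$, namely $\Res_{G_K}\Ind_K^{\Q}\chi\simeq\chi\oplus\chi^c$, so that $\tr\Ind_K^{\Q}\chi$ equals $\chi+\chi^c$ on $G_K$ and vanishes off $G_K$, and likewise for $\eta$. Since $K\ne M$ are distinct quadratic fields, $KM/\Q$ is biquadratic, $\Gal(KM/\Q)$ is a Klein four-group, $G_{KM}$ is normal in $G_\Q$ of index $4$ and of index $2$ in each of $G_K$ and $G_M$, and $\Gal(KM/K)=G_K/G_{KM}$. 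The one elementary observation I would isolate first is that $G_M\setminus G_{KM}\subseteq G_\Q\setminus G_K$ (any $g\in G_M$ that lies in $G_K$ lies in $G_K\cap G_M=G_{KM}$); consequently the nontrivial element $c$ of $\Gal(K/\Q)$ may be chosen to be represented by an element of $G_M$, and symmetrically $\tau$ may be represented in $G_K$.

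First I would show that the restriction condition \eqref{restriction cond} forces $\chi^c=\chi\cdot\delta_{KM/K}$. Picking a representative $c\in G_M$ of the nontrivial element of $\Gal(K/\Q)$, for $h\in G_{KM}$ one has $chc^{-1}\in G_{KM}$, whence $\chi^c(h)=\chi(chc^{-1})=\eta(chc^{-1})=\eta(h)=\chi(h)$, using \eqref{restriction cond} for the outer two equalities and, in the middle, that $\eta$ factors through $G_M^{\mathrm{ab}}$ with $c,h\in G_M$. Thus $\chi^c$ and $\chi$ agree on $G_{KM}$; as $\chi^c\ne\chi$ by hypothesis, $\chi^c\chi^{-1}$ is the unique nontrivial character of $G_K$ trivial on $G_{KM}$, i.e. the inflation $\delta_{KM/K}$ of the nontrivial character of $G_K/G_{KM}=\Gal(KM/K)$. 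The relation $\eta^\tau=\eta\cdot\delta_{KM/M}$ follows by the same argument with the roles of $K$ and $M$ exchanged.

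For the biconditional I would compare characters. Assume \eqref{restriction cond}. By the previous step $\chi^c=-\chi$ on $G_K\setminus G_{KM}$, so $\tr\Ind_K^{\Q}\chi=\chi+\chi^c$ vanishes on $G_K\setminus G_{KM}$ and equals $2\chi$ on $G_{KM}$; hence it is supported on $G_{KM}$ with value $2\chi|_{G_{KM}}$, and the identical computation shows $\tr\Ind_M^{\Q}\eta$ is supported on $G_{KM}$ with value $2\eta|_{G_{KM}}=2\chi|_{G_{KM}}$. The two characters coincide, so \eqref{two equivalent} holds. Conversely, assume \eqref{two equivalent}: the common character of $\Ind_K^{\Q}\chi$ and $\Ind_M^{\Q}\eta$ has support contained in $G_K$ and in $G_M$, hence in $G_{KM}$; in particular $\chi+\chi^c\equiv 0$ on $G_K\setminus G_{KM}$, so $\chi^c\chi^{-1}\equiv -1$ there, and a character constant equal to $-1$ on the nontrivial coset of $G_{KM}$ is trivial on $G_{KM}$, hence equals $\delta_{KM/K}$, giving $\chi^c=\chi\cdot\delta_{KM/K}$ and likewise $\eta^\tau=\eta\cdot\delta_{KM/M}$; restricting the equality of characters to $G_{KM}$ then gives $2\chi|_{G_{KM}}=2\eta|_{G_{KM}}$, which is \eqref{restriction cond}. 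In both cases the ``moreover'' relations have been obtained.

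The main obstacle is not a computation but getting the biquadratic bookkeeping exactly right: that $K\ne M$ is genuinely needed (for $K=M$ the asserted equivalence and the relation $\chi^c=\chi\cdot\delta$ both fail), that $G_{KM}$ is normal in $G_\Q$ with Klein four-group quotient so that $c$ can be chosen inside $G_M$, and the clean identification — once $\chi^c\chi^{-1}$ is known to be trivial on $G_{KM}$ and nontrivial overall — of $\chi^c\chi^{-1}$ with the specific quadratic character $\delta_{KM/K}$. After that, everything reduces to the index-two case of Mackey's restriction formula together with elementary character arithmetic.
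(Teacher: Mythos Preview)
Your proof is correct. The route differs from the paper's, which computes the intertwining number $[\Ind_K^{\Q}\chi,\Ind_M^{\Q}\eta]_{G_\Q}$ by Frobenius reciprocity, then applies the full Mackey double-coset formula to identify $\Res_{G_M}\Ind_K^{\Q}\chi$ with $\Ind_{G_{KM}}^{G_M}\chi$, and finally applies Frobenius reciprocity once more to reduce to $[\chi,\eta]_{G_{KM}}$; irreducibility then turns the intertwining number into an equivalence criterion. You instead compare traces directly: using only the index-two case $\Res_{G_K}\Ind_K^{\Q}\chi\simeq\chi\oplus\chi^c$ and the vanishing of the induced character off $G_K$, you locate the support of the common character in $G_{KM}$ and read off both the biconditional and the ``moreover'' relations. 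Your derivation of $\chi^c=\chi\cdot\delta_{KM/K}$ in the forward direction, via choosing a lift $c\in G_M$ and exploiting that $\eta$ is abelian on $G_M$, is a nice touch the paper does not use; the paper instead obtains this relation (only under the hypothesis that \eqref{two equivalent} holds) by noting $\Ind_K^{\Q}\chi=\Ind_K^{\Q}\chi^c$ and rerunning the intertwining-number argument with $\chi^c$. Your approach is more elementary in that it avoids the general double-coset formula and the intertwining-number framework; the paper's approach is more structural and would generalize more readily beyond the index-two setting.
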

\begin{proof} As Galois representations, $\chi$, $\eta$, $\Ind_K^{\Q} \chi$, and $\Ind_M^{\Q} \eta$ all factor through finite quotients of their respective Galois groups. So they may be viewed as representations of finite groups. 
Given a pair of   representations $\pi_1, \pi_2$ of a finite group $G$, denote by 
$$[\pi_1,\pi_2]_G = \dim \Hom_G(\pi_1,\pi_2).$$ 
So for $\pi_1,\pi_2$ irreducible, $[\pi_1,\pi_2]_G = 1$ if $\pi_1\simeq \pi_2$, and is equal to zero otherwise. 

We also recall Frobenius reciprocity for an induced representation from a subgroup $H$ of $G$:
\[
[ \Ind_H^{G} \xi , \pi]_G=[\xi ,\Res_H \pi]_H.
\]
Hence in our case
\[
[\Ind_{K}^{\Q}  \chi , \Ind_{M}^{\Q}  \eta]_{G_{\Q}}  = [\Res_{G_M} \Ind_{K}^{\Q}  \chi , \eta]_{G_M}.
 \]
Next we recall ``Mackey theory'', which says that the restriction of an induced representation has a direct sum decomposition
\begin{equation}\label{mackey formula}
\Res_{G_M} \Ind_{K}^{\Q}  \chi   \simeq \bigoplus_{s\in G_K\backslash G_{\Q}/G_M} \Ind_{s^{-1}G_Ks\cap G_M}^{G_M} (\chi^{s} ),
\end{equation}
where $\chi^{s}(h) = \chi(shs^{-1})$. 

In our case, $G_K,  G_M$ have index two in $G_{\Q}$, hence are normal, and moreover $G_KG_M=G_{\Q}$ since $G_K\neq G_M$. Hence there is only one double coset, and Mackey's formula \eqref{mackey formula} reduces to 
\[
\Res_{G_M} \Ind_{K}^{\Q}  \chi   \simeq \Ind_{G_K\cap G_M}^{G_M}\chi.
\]
Hence
\[
[\Ind_{K}^{\Q}  \chi , \Ind_{M}^{\Q}  \eta]_{G_{\Q}}  = [\Res_{G_M} \Ind_{K}^{\Q}  \chi , \eta]_{G_M}
 =[  \Ind_{G_K\cap G_M}^{G_M}\chi,\eta]_{G_M}.
\]
Applying again Frobenius reciprocity gives
\[
[  \Ind_{G_K\cap G_M}^{G_M}\chi,\eta]_{G_M} = [\chi,\eta]_{G_K\cap G_M} = \begin{cases} 1,&{\rm if}~ \chi|_{G_K\cap G_M} = \eta|_{G_K\cap G_M} \\0,&\mbox{otherwise, } \end{cases}
\]
which proves the claim \eqref{restriction cond}.

Moreover, we have $\Ind_{K}^{\Q}\chi = \Ind_{K}^{\Q} \chi^c$, so the same conclusion holds with $\chi$ replaced by $\chi^c$, and in particular we must have
\[
\chi|_{G_K\cap G_M} = \chi^c|_{G_K\cap G_M}.
\]
Since $[G_K:G_K\cap G_M]=2$, this means that $\chi^{-1}\chi^c$ is a quadratic character of $G_K$, nontrivial because $\chi\neq \chi^c$ for irreducibility, which is trivial on $G_K\cap G_M = G_{KM}$.  Hence it must equal $\delta_{KM/K}$.  

Likewise $\eta^{-1}\eta^\tau$ is a quadratic character of $G_M$, which is trivial on $G_K\cap G_M$. Hence it must equal $\delta_{KM/M}$.  
\end{proof}

Proposition \ref{Mackey} provides a necessary condition for $\Ind_K^{\Q} \chi$ to be equivalent to another induced representation of the same type, namely $\chi^c$ differs from $\chi$ by a quadratic character. The theorem below says that this condition is also sufficient.

\begin{thm}\label{arithequiv} Let $K/\Q$ be a quadratic extension with $\Gal(K/\Q) = \langle c \rangle$. Let $\chi$ be a finite order non self-conjugate character of $G_K$. Then 

(a) $\Ind_K^{\Q} \chi \simeq \Ind_M^{\Q} \eta$ for some pair $(\eta, M)$ with $M \ne K$

\noindent if and only if 

(b) $\chi^c = \chi \cdot \delta$ for some quadratic character $\delta$ of $G_K$.
\end{thm}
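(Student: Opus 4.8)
The plan is to prove (a) $\Rightarrow$ (b) and (b) $\Rightarrow$ (a) separately, with the forward direction already essentially handled by Proposition~\ref{Mackey}. For the implication (a) $\Rightarrow$ (b): assuming $\Ind_K^\Q\chi \simeq \Ind_M^\Q\eta$ with $M \neq K$, Proposition~\ref{Mackey} applies (note that irreducibility of $\Ind_M^\Q\eta$, forced by $\Ind_K^\Q\chi$ being irreducible, guarantees $\eta^\tau \neq \eta$, so the hypotheses of the proposition are met), and it yields directly that $\chi^c = \chi\cdot\delta_{KM/K}$, where $\delta_{KM/K}$ is the nontrivial quadratic character of $\Gal(KM/K)$. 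So we may take $\delta = \delta_{KM/K}$.

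For the converse (b) $\Rightarrow$ (a), which is the substantive direction, suppose $\chi^c = \chi\cdot\delta$ with $\delta$ a nontrivial quadratic character of $G_K$ (nontrivial because $\chi \neq \chi^c$). The character $\delta$ cuts out a quadratic extension $L/K$, i.e. $\delta = \delta_{L/K}$ where $L$ is the fixed field of $\ker\delta$ inside $G_K$. First I would check that $L/\Q$ is Galois: since $\chi^c/\chi = \delta$ and conjugating again by $c$ gives $\chi/\chi^c = \delta^c$ (using $\delta^2=1$, so $\delta^{-1}=\delta$), we get $\delta^c = \delta$, meaning $\ker\delta$ is stable under the $c$-action on $G_K$, so $L$ is normal over $\Q$. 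Then $\Gal(L/\Q)$ is an order-$4$ extension of $\langle c\rangle$ by $\Gal(L/K)$; it is either $\Z/4$ or $(\Z/2)^2$. The goal is to produce the \emph{third} quadratic subfield $M$ of $L$ (when $\Gal(L/\Q) = (\Z/2)^2$) or an appropriate quadratic subfield, together with a character $\eta$ of $G_M$, so that $\Ind_K^\Q\chi \simeq \Ind_M^\Q\eta$. The natural candidate: $L/M$ is quadratic with $LM$-part $= L$, and one wants $\eta$ on $G_M$ with $\eta|_{G_L} = \chi|_{G_L}$ (which determines $\eta$ up to twisting by $\delta_{L/M}$), and then verify via Proposition~\ref{Mackey} (applied in reverse) that the induced representations agree — this reduces to checking $\chi|_{G_L} = \eta|_{G_L}$ and that $\eta$ is non-self-conjugate.

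The main obstacle is the case $\Gal(L/\Q) \cong \Z/4$, which is cyclic and has no third quadratic subfield; here one cannot simply take $M$ to be another quadratic subfield of $L$. This is presumably where the structural analysis of \S2.2--\S2.5 (promised in the introduction, "explores the structure of the Galois group of a degree-two dihedral representation induced from a character $\chi$ satisfying (\ref{delta1})") comes in: one must show that either this cyclic case does not actually occur for non-self-conjugate $\chi$, or else one must enlarge the field — pass to the field cut out by a larger portion of $\ker\chi$ and find $M$ inside that larger Galois group as a quadratic extension of $\Q$ not equal to $K$ over which $\chi$ descends. Concretely, I would let $\mathcal L$ be the fixed field of $\ker\chi \cap \ker\chi^c$ (equivalently of $\ker(\rho_\chi)$ where $\rho_\chi = \Ind_K^\Q\chi$), analyze $\Gal(\mathcal L/\Q)$ as a dihedral-type group, locate the relevant reflection subgroups, and read off from them a quadratic field $M \neq K$ and a character $\eta$ of $G_M$ realizing $\rho_\chi$ as an induction from $M$; the condition $\chi^c = \chi\cdot\delta$ is exactly what makes $\rho_\chi$ imprimitive in more than one way, hence inducible from more than one quadratic subfield. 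I would then conclude by invoking Proposition~\ref{Mackey} to confirm $\Ind_K^\Q\chi \simeq \Ind_M^\Q\eta$.
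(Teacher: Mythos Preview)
Your plan is correct and aligns with the paper's proof: both directions proceed as you describe, with Proposition~\ref{Mackey} handling (a)$\Rightarrow$(b) and the structural analysis of \S2.2--2.5 underpinning (b)$\Rightarrow$(a). The one point you leave open is resolved definitively: the cyclic case $\Gal(L/\Q)\cong\Z/4\Z$ never occurs---this is precisely Theorem~\ref{centerofG}, proved (as you anticipate) by passing to the field $E$ where $\rho_\chi$ is faithful and showing that $\ker\delta$ is the center of $\Gal(E/\Q)$ with quotient the Klein four-group---so no enlargement beyond $L$ is needed to locate $M$; the paper then takes $M\subset L$ to be the quadratic subfield fixed pointwise by $c$ and constructs $\eta$ by explicitly diagonalizing $\rho_\chi|_{G_M}$ in the eigenbasis of $\rho_\chi(c)$, which is the concrete realization of your proposed extension of $\chi|_{G_L}$ to $G_M$.
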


As discussed above, (a) implies $(\chi, K)$ is arithmetically equivalent to $(\eta, M)$. Hence
we have the following immediate corollary, which provides a convenient sufficient condition for $(\chi, K)$ to be arithmetically equivalent to another pair.

\begin{cor}\label{sufficientcond} Let $K/\Q$ be a quadratic extension with $\Gal(K/\Q) = \langle c \rangle$. Let $\chi$ be a finite order character of $G_K$. If $\chi^c = \chi \cdot \delta$ for some quadratic character $\delta$ of $G_K$, then there is a pair arithmetically equivalent to $(\chi, K)$. 
\end{cor}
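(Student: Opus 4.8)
The plan is to prove the equivalence (a)$\,\Leftrightarrow\,$(b) of Theorem~\ref{arithequiv}; Corollary~\ref{sufficientcond} is then immediate, since its hypothesis is exactly (b), so (a) holds, and (a) gives $L(s,\chi,K)=L(s,\Ind_K^{\Q}\chi)=L(s,\Ind_M^{\Q}\eta)=L(s,\eta,M)$ by inductivity of Artin $L$-functions, i.e. $(\chi,K)$ and $(\eta,M)$ are arithmetically equivalent (here $\eta$ has finite order because $\Ind_K^{\Q}\chi$ factors through a finite quotient of $G_{\Q}$). For (a)$\,\Rightarrow\,$(b) I would simply unwind Proposition~\ref{Mackey}: putting $\rho=\Ind_K^{\Q}\chi$, which is irreducible of dimension $2$ because $\chi^c\neq\chi$, an isomorphism $\rho\simeq\Ind_M^{\Q}\eta$ with $M\neq K$ forces $[M:\Q]=\dim\Ind_M^{\Q}\eta=2$, so $M$ is a quadratic field, and irreducibility of $\Ind_M^{\Q}\eta$ forces $\eta\neq\eta^\tau$; then Proposition~\ref{Mackey} gives $\chi^c=\chi\cdot\delta_{KM/K}$, and $\delta:=\delta_{KM/K}$ is a quadratic character of $G_K$ (nontrivial, since $[KM:K]=2$ as $M\neq K$), which is (b).

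The substance is (b)$\,\Rightarrow\,$(a). Assume $\chi^c=\chi\cdot\delta$ with $\delta$ a quadratic character of $G_K$; then $\delta\neq 1$, since $\chi^c\neq\chi$. First I would observe that $\delta$ is self-conjugate: applying $c$ to $\chi^c=\chi\delta$ gives $\chi=\chi^{c^2}=\chi\,\delta\,\delta^c$, so $\delta\delta^c=1$, and $\delta^2=1$ forces $\delta^c=\delta$. A self-conjugate character of $G_K$ extends to a character $\tilde\delta$ of $G_{\Q}$ — this is the base-change correspondence recalled in the introduction, or, directly, $\Ind_K^{\Q}\delta$ is then reducible and each of its two $1$-dimensional summands restricts to $\delta$ on $G_K$. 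Next I would apply the projection formula:
\[
\rho\otimes\tilde\delta=\bigl(\Ind_K^{\Q}\chi\bigr)\otimes\tilde\delta=\Ind_K^{\Q}\bigl(\chi\cdot\tilde\delta|_{G_K}\bigr)=\Ind_K^{\Q}(\chi\delta)=\Ind_K^{\Q}\chi^c=\Ind_K^{\Q}\chi=\rho,
\]
so $\rho\otimes\tilde\delta\simeq\rho$. Comparing determinants (and using $\dim\rho=2$) gives $\det\rho=(\det\rho)\cdot\tilde\delta^2$, hence $\tilde\delta^2=1$: the extension $\tilde\delta$ is automatically a quadratic character of $G_{\Q}$, it is nontrivial ($\tilde\delta|_{G_K}=\delta\neq1$), and it differs from the character $\delta_{K/\Q}$ cutting out $K$ (which restricts trivially to $G_K$).

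To finish I would recover $M$ and $\eta$ from $\tilde\delta$ via the standard fact that a $2$-dimensional irreducible representation fixed by a nontrivial quadratic twist $\epsilon$ is induced from $\ker\epsilon$. The quick argument: $\Res_{\ker\epsilon}\rho$ cannot be irreducible, for otherwise Frobenius reciprocity would force $\rho$ to occur with multiplicity one in $\Ind_{\ker\epsilon}^{G_{\Q}}\Res_{\ker\epsilon}\rho$, which however is isomorphic to $\rho\oplus(\rho\otimes\epsilon)\simeq\rho\oplus\rho$; so $\Res_{\ker\epsilon}\rho=\psi\oplus\psi'$ for characters $\psi,\psi'$ of $\ker\epsilon$, necessarily distinct (else $\rho$ would be reducible), and then $\rho\simeq\Ind_{\ker\epsilon}^{G_{\Q}}\psi$. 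Applying this with $\epsilon=\tilde\delta$ and letting $M$ be the quadratic field with $G_M=\ker\tilde\delta$, we get $M\neq K$ (because $\tilde\delta\neq\delta_{K/\Q}$) and $\rho\simeq\Ind_M^{\Q}\eta$ with $\eta:=\psi$ a finite-order character of $G_M$ — which is (a).

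I expect the one genuinely new point to be the self-conjugacy of $\delta$ together with the determinant computation showing that its extension to $G_{\Q}$ is again quadratic: this is precisely what manufactures a \emph{second} nontrivial quadratic self-twist of $\rho$ besides $\delta_{K/\Q}$, and hence a second quadratic field inducing $\rho$; everything else is formal representation theory of finite groups. The main obstacle, such as it is, will be keeping the bookkeeping honest — the existence and quadraticity of $\tilde\delta$, and the verification that $\ker\tilde\delta$ genuinely differs from $G_K$. A more hands-on alternative, matching the route announced for \S2.2--\S2.5, would instead analyze the finite monomial image $\rho(G_{\Q})$ inside the normalizer of a maximal torus of $\GL_2(\C)$ and read off both $\delta$ and the second index-two subgroup directly from its group structure; I would expect the streamlined argument above to be shorter.
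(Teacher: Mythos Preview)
Your argument is correct and takes a genuinely different route from the paper's. The paper proves (b)$\Rightarrow$(a) by a hands-on analysis of the finite image $\rho(G_{\Q})$: \S\S2.2--2.5 show that the center of this group is exactly $\ker\delta$ and that $G/\ker\delta$ is the Klein four-group (Theorem~\ref{centerofG}), so the fixed field of $\ker\delta$ is biquadratic over $\Q$ and contains a quadratic subfield $M\neq K$; then \S2.6 writes $\rho$ in an explicit basis, diagonalizes $\rho(c)$, and reads off $\eta$ by a change of basis. Your approach bypasses all of this group-structure analysis: you observe $\delta^c=\delta$, extend it to $\tilde\delta$ on $G_{\Q}$, use the projection formula plus a determinant comparison to see that $\tilde\delta$ is a nontrivial quadratic twist of $\rho$ distinct from $\delta_{K/\Q}$, and then invoke the standard ``irreducible two-dimensional self-twist implies induced'' lemma. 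This is shorter and more conceptual; the paper's route, by contrast, yields the extra structural information recorded in Theorems~\ref{centerofG} and \ref{structureofG} and in Proposition~\ref{faithful}, which, while not needed for the corollary itself, is of independent interest and is referenced later. One small wording point: in your sentence ``then $\delta\neq 1$, since $\chi^c\neq\chi$'' the implication actually runs the other way for the corollary (where non-self-conjugacy is not assumed): $\delta$ quadratic means $\delta\neq 1$, whence $\chi^c=\chi\delta\neq\chi$; the paper makes exactly this remark just after stating the corollary.
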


Note that the non self-conjugate requirement for $\chi$ automatically follows from the condition $\chi^c = \chi \cdot \delta$. 

Theorem \ref{arithequiv} will be proved in the last subsection of this section, after studying the structure of the Galois group of a dihedral representation induced by a character $\chi$ satisfying the condition (b).

\subsection{The Galois group of a dihedral representation}

Consider a two-dimensional \underline{faithful}\footnote{otherwise it factors through a subfield of $E$, that is through $\Gal(E'/\Q)$ for $\Q\subset E'\subset E$} 
irreducible representation $$\rho:G:=\Gal(E/\Q)\to \GL(2,\C)$$ where $E/\Q$ is a finite extension, which is {\em dihedral}, in the sense that the projectivation 
 \[
 \bar \rho: G\to \GL(2,\C) \to \PGL(2,\C)
 \]
 is a dihedral group $D_n$ of order $2n$: 
 \[
 \bar\rho(G) \simeq D_n.
 \]
Thus  there is an index-two subgroup $H\subset G$ and a character $\chi:H\to \C^\times$ so that 
  $$\rho\simeq \Ind_H^G(\chi).$$ 
    
    The subgroup $H$ corresponds to a quadratic extension $K/\Q$, a subfield of $E$, namely the fixed points of $H$ in $E$, so that 
    \[
    H=\Gal(E/K).
    \] 
    Let $c$ be an element of $\Gal(E/\Q)$ such that when restricted to $K$ it gives the Galois involution of $K$ so that we can write 
  \[
  \Gal(K/\Q) = \{1,c\} \simeq \Gal(E/\Q)/\Gal(E/K) = G/H. 
  \] 

 Assume further that there is a quadratic character $\delta:H\to \{\pm 1\}$, so that
 \[
 \chi^c = \chi \cdot \delta, 
 \]
 where $\chi^c$ is the character on $H$ given by $\chi^c(h) = \chi(chc^{-1})=\chi(c^{-1}hc)$ (since $c^2 \in H$) arising from the short exact sequence 
 \[
 1 \rightarrow H \rightarrow G \rightarrow \Gal(K/\Q) \rightarrow 1.
 \]
We study the structure of $G=\Gal(E/\Q)$.
 \begin{thm}\label{centerofG}
  The Galois group   $G=\Gal(E/\Q)$ is of order $4m $, with the center $Z =$ ker $\delta$ being a cyclic subgroup of   order $m $. 
The projectivization $G/Z\simeq \bar\rho (G)\subset \PGL(2,\C)$ is the Klein 4-group $D_2\simeq \Z/2\Z \oplus \Z/2\Z$. 
 \end{thm}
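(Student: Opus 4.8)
The plan is to start from faithfulness to constrain $H$. Since $H$ is normal of index two, the restriction of the induced representation decomposes as $\rho|_H\simeq\chi\oplus\chi^c$, so faithfulness means the homomorphism $\phi\colon H\to\C^\times\times\C^\times$, $h\mapsto(\chi(h),\chi^c(h))$, is injective. In particular $H$ is abelian and the pair $(\chi,\chi^c)$ separates points of $H$; also conjugation by $c^2\in H$ is then trivial on $H$, which is exactly what lets one write $\chi^c(h)=\chi(chc^{-1})=\chi(c^{-1}hc)$ unambiguously.

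Next I would compute the center $Z=Z(G)$, treating the elements inside and outside $H$ separately. For $h\in H$, being central is equivalent to commuting with the coset representative $c$, namely $chc^{-1}=h$; applying $\chi$ and $\chi^c$ to both sides and using $\chi^c=\chi\cdot\delta$ together with $c^2\in H$ and the abelianness of $H$, one finds $\phi(chc^{-1})=\phi(h)$ precisely when $\delta(h)=1$, so $H\cap Z=\ker\delta$. For an element $g=ch'$ outside $H$, conjugation by $g$ on the abelian group $H$ equals conjugation by $c$, which is nontrivial because $\chi^c=\chi\delta\neq\chi$ (irreducibility of $\rho$ forces $\delta$ to be nontrivial); hence nothing outside $H$ is central. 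This identifies $Z=\ker\delta$. Since $\delta$ is a nontrivial quadratic character, $|\ker\delta|=|H|/2=:m$, so $|G|=2|H|=4m$.

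For cyclicity of $Z$: on $\ker\delta$ we have $\chi^c=\chi\delta=\chi$, so $\phi$ restricted to $Z$ lands in the diagonal of $\C^\times\times\C^\times$, exhibiting $Z$ as a finite subgroup of $\C^\times$ via $\chi|_Z$; hence $Z$ is cyclic of order $m$, and incidentally $\rho(z)=\chi(z)I$ is scalar for every $z\in Z$. For the projectivization, since $\rho$ is faithful we may identify $G$ with $\rho(G)$, and then $\bar\rho(G)=\rho(G)/(\rho(G)\cap\C^\times I)$. By Schur's lemma the central subgroup $Z$ acts by scalars, and conversely any $g$ with $\rho(g)$ scalar commutes with everything, hence (by faithfulness) is central; thus $\rho(G)\cap\C^\times I=\rho(Z)$ and $G/Z\cong\bar\rho(G)$, a group of order $4m/m=4$. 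Finally, $G$ carries a $2$-dimensional irreducible representation, so $G$ is nonabelian and $G/Z(G)$ cannot be cyclic; the unique noncyclic group of order $4$ is the Klein four-group $D_2\cong\Z/2\Z\oplus\Z/2\Z$.

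I expect the determination of the center to be the only delicate point: one must handle the cases inside and outside $H$ separately, and inside $H$ one has to keep careful track of how $\chi^c=\chi\cdot\delta$, the abelianness of $H$, and the relation $c^2\in H$ interact. Once $Z=\ker\delta$ is established, the order count $|G|=4m$, the cyclicity of $Z$, and the identification $G/Z\cong D_2$ all follow quickly.
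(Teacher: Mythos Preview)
Your argument is correct and follows the same overall arc as the paper: identify $Z=\ker\delta$ via the relation $\chi^c=\chi\delta$, read off $|G|=4m$, get cyclicity of $Z$ from the embedding $\chi|_Z\colon Z\hookrightarrow\C^\times$, and then identify $G/Z$ as the Klein four-group. The implementations differ only cosmetically: the paper writes down the explicit matrices of $\rho(h)$ and $\rho(c)$ in the basis $\{\mathbf v_1,\rho(c)\mathbf v_1\}$, whereas you package the same information into the injective map $\phi=(\chi,\chi^c)\colon H\hookrightarrow\C^\times\times\C^\times$ and argue with it directly.

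The one substantive difference is the last step. The paper invokes the standing hypothesis that $\bar\rho(G)$ is a dihedral group $D_n$ and observes that the only dihedral group of order $4$ is $D_2$. You instead use the classical fact that for a nonabelian group $G$ the quotient $G/Z(G)$ is never cyclic, so a group of order $4$ arising as $G/Z$ must be $\Z/2\Z\oplus\Z/2\Z$. Your route is a bit cleaner here, since it does not appeal to the dihedral hypothesis at all; the paper's route is shorter given that hypothesis is already on the table. Either way the conclusion is the same.
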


 \begin{proof} Let ${\bf v_1}$ be a basis of the $1$-dimensional representation $\chi$ of $H$. Then ${\bf v_2} = \rho(c)({\bf v_1})$ is linearly independent of ${\bf v_1}$ since $G$ is generated by $H$ and $c$, and $\rho$ is $2$-dimensional.
In the basis ${\bf v_1, \bf v_2}$ of $\Ind_H^G\chi$, we have 
\[
\rho(c) = \begin{pmatrix} 0& \chi(c^2) \\ 1 &0 \end{pmatrix}\quad{\rm and}
 \quad 
\rho(h) = \begin{pmatrix} \chi(h)& 0\\ 0&\chi^c(h) \end{pmatrix} ~{\rm for}~h \in H
\]
because $c^2 \in H$.  

Since we assume that $\chi^c = \chi \cdot \delta$, we have
\[
\rho(h) = \chi(h)  \begin{pmatrix} 1& 0\\ 0&\delta(h) \end{pmatrix}
\]
and since $\delta$ is a quadratic character, we have that on the index two subgroup $\ker \delta\subset H$, $\rho(h)$ is a scalar matrix
\[
\rho(h) = \chi(h)I, \quad h\in \ker \delta.
\]

Therefore the center of $G$ equals $\ker \delta$, which has index $4$ in $G$: 
\[
{\rm Center}(G) = \ker \delta
\]
and the image in $\PGL(2,\C)$ of $\rho$ is therefore a group of order $4$, isomorphic to $G/\ker\delta$. Since we are given that it is dihedral, we therefore conclude that it equals $D_2$.

We note that since $\rho = \Ind_H^G\chi$ is faithful, the same holds for its restriction to $H$, that is, the subgroup
 \[
 \rho(H) = \{ \chi(h) \begin{pmatrix} 1&0 \\0&\delta(h) \end{pmatrix} : h\in H\}.
 \]

We now explore the implications of our conditions on the structure of $G$, knowing that the center is $Z= \ker \delta$, a subgroup of index $2$ in $H$. In particular the order of $H$ is even: $|H|=2m$. 

The center is $Z=\ker\delta$, the image under $\rho $ being 
\[
\rho(Z) = \{\chi(h)I: h\in Z\}
\]
and the restriction of $\chi$ to the center $Z$ must be faithful, so $\chi(Z)$ being a finite subgroup of the multiplicative group of the field of complex numbers must be cyclic, say is cyclic of order $m$: $Z=\langle z_0: z_0^m=1\rangle$ and $G/Z\simeq \Z/2\Z\oplus \Z/2\Z$. 
 \end{proof}

It remains to restrict the structure of $G$.   

\begin{thm}\label{structureofG}
i) $m$ is even: $m=2\mu$, so that $|G|=4m=8\mu$ has order divisible by $8$. 

ii) The action of 
$c$ on $H$ is given by 
\[
chc^{-1} = \begin{cases} h, & h\in Z\\ z_1h,&h\notin Z \end{cases}
\]
where $z_1\in Z$ is the unique involution in $Z$ (corresponding to the element $m/2\in \Z/m\Z$). 

iii) There are two possibilities for H

a)  $H\simeq \Z/2m\Z$ is cyclic; 

b) $H=\Z/m\Z\oplus \Z/2\Z$. 

\end{thm}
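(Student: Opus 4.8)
The plan is to derive everything from the explicit description of $\rho|_H$ obtained in the proof of Theorem~\ref{centerofG}, together with the one extra structural input that $c^2\in H$. First I would record that $H$ is abelian: since $Z=\ker\delta=\mathrm{Center}(G)$ is contained in $H$ with $[H:Z]=2$, the group $H/\mathrm{Center}(H)$ is a quotient of the cyclic group $H/Z$, hence cyclic, so $H$ is abelian (equivalently, $\rho(H)$ consists of the pairwise commuting matrices $\chi(h)\,\mathrm{diag}(1,\delta(h))$). Consequently conjugation by $c^2\in H$ is trivial on $H$, i.e. the automorphism $\phi\colon H\to H$, $\phi(h)=chc^{-1}$, satisfies $\phi^2=\mathrm{id}$.

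Next I would pin down $\phi$ exactly. Because $Z$ is central in $G$, $\phi$ fixes $Z$ pointwise, so $\phi$ induces the identity on $H/Z\simeq\Z/2\Z$; hence $h\mapsto\phi(h)h^{-1}$ is a homomorphism $H\to Z$ that is trivial on $Z$ and therefore factors through $H/Z$. Thus there is a single element $z_1\in Z$ with $\phi(h)=h$ for $h\in Z$ and $\phi(h)=z_1h$ for $h\in H\setminus Z$, which is the shape of statement (ii) once $z_1$ is identified. Applying $\chi$ to $\phi(h)=z_1h$ for $h\notin Z$ and using $\chi^c=\chi\delta$ with $\delta|_{H\setminus Z}\equiv-1$ gives $\chi(z_1)=-1$; in particular $z_1\neq 1$, and from $\phi^2=\mathrm{id}$ (or from $\chi(z_1)^2=1$ together with the faithfulness of $\chi|_Z$) we get $z_1^2=1$. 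So $z_1$ is a nontrivial involution in the cyclic group $Z$ of order $m$, which forces $2\mid m$, say $m=2\mu$, makes $z_1$ the unique involution $m/2\in\Z/m\Z$, and gives $|G|=4m=8\mu$. This establishes (i) and (ii).

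For (iii) I would simply invoke the structure theorem for finite abelian groups. Write $H\cong\Z/d_1\Z\times\cdots\times\Z/d_k\Z$ with $d_1\mid\cdots\mid d_k$. A generator of $Z$ has order $m$, so $m\mid d_k$, while $d_k\mid|H|=2m$; hence $d_k=m$ or $d_k=2m$. If $d_k=2m$ then $H$ is cyclic of order $2m$, giving case (a). If $d_k=m$ then $d_1\cdots d_{k-1}=2$, which forces $k=2$ and $d_1=2$, i.e. $H\cong\Z/2\Z\times\Z/m\Z$, giving case (b).

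I do not expect a genuine obstacle; this is elementary group theory on top of Theorem~\ref{centerofG}. The one point that needs care is the second step: verifying that the element $z_1$ produced from the conjugation action is a \emph{nontrivial} involution. This is precisely where the hypothesis that $\chi$ is not self-conjugate (so $\phi\neq\mathrm{id}$, hence $z_1\neq1$) and the faithfulness of $\chi|_Z$ (so $z_1^2=1$ forces $z_1$ of exact order $2$) enter, and it is what forces $m$ to be even and thereby drives the rest of the argument.
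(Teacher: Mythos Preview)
Your argument is correct and complete. It takes a genuinely different route from the paper's proof, which proceeds by a case split on whether $\chi$ is faithful on $H$: in the faithful case the paper observes $H$ is cyclic (case (a)), writes $ch_0c^{-1}=h_0^k$ for a generator $h_0$, and computes $k$ by an explicit matrix identity to get $k=m+1$ and $m$ even; in the non-faithful case it identifies $H=Z\times\ker\chi\cong\Z/m\Z\times\Z/2\Z$ (case (b)) and again uses a matrix conjugation to force $-1\in\langle\zeta_m\rangle$, hence $m$ even.

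By contrast, you argue abstractly: $H$ is abelian because $H/Z$ is cyclic of order $2$; conjugation by $c$ is an involution on $H$ fixing $Z$ pointwise, so $h\mapsto chc^{-1}h^{-1}$ factors through $H/Z$ and picks out a single $z_1\in Z$; then the relation $\chi^c=\chi\delta$ evaluated off $Z$ gives $\chi(z_1)=-1$, and faithfulness of $\chi|_Z$ forces $z_1$ to be the unique involution and $m$ even. You then recover (iii) from the invariant-factor decomposition of the abelian group $H$ of order $2m$ containing an element of order $m$. Your approach is cleaner and avoids matrices and case analysis; the paper's approach has the side benefit of simultaneously identifying which structural alternative for $H$ corresponds to $\chi$ faithful versus not (information used later in Proposition~\ref{faithful}).
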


To prove this theorem, there are two possibilities to consider: $\chi$ is a faithful character of $H$, or not, which are carried out in the next two subsections.

\subsection{The case $\chi$ is faithful}
In this case $H\simeq \chi(H)$ is embedded as a finite subgroup of the multiplicative group of the complex numbers, hence is cyclic, of order 
$2m$, say $H=\langle h_0: h_0^{2m}=1 \rangle$, then $Z = \langle h_0^2\rangle$ consists of the squares in $H$. 

Now conjugation by $c$ acts by an automorphism, hence takes the generator $h_0$ to $h_0^k$: 
\[
ch_0c^{-1} = h_0^{k}
\]
with $k$ coprime to $|H|=2m$. In particular $k$  is odd. 

We can obtain further information by using the matrix representation: Recall $\delta(h_0) = -1$, and $\chi(h_0)=\zeta_{2m}$ is a primitive $2m$-th root of unity, and 
\[
\rho(h_0)= \chi(h_0) \begin{pmatrix} 1& 0\\ 0&-1 \end{pmatrix}  = \zeta_{2m}\begin{pmatrix} 1& 0\\ 0&-1 \end{pmatrix}. 
\]
We have
\[
\rho(c)\rho(h_0) \rho(c)^{-1} = \begin{pmatrix} 0&\chi(c^2)\\ 1&0 \end{pmatrix}\zeta_{2m} \begin{pmatrix} 1& 0\\ 0&-1 \end{pmatrix}\begin{pmatrix} 0&1\\ \chi(c^2)^{-1}&0 \end{pmatrix} =\zeta_{2m} \begin{pmatrix} -1& 0\\ 0&1 \end{pmatrix}
\]
 while on the other hand
 \[
 \rho(c)\rho(h_0) \rho(c)^{-1} =  \rho(ch_0c^{-1}) = \rho(h_0^k) =\rho(h_0)^k =  \zeta_{2m}^k  \begin{pmatrix} 1& 0\\ 0& (-1)^k \end{pmatrix}
 \] 
 giving 
 \[
\zeta_{2m}^{k-1} = -1.
 \]
Since  $\zeta_{2m}$ is a primitive $2m$-th root of unity,   we find  
 \[
 k-1 = m\mod 2m  
 \]
 so that the only possibility is (given $1\leq k<2m$)
 \[
 k= m+1.
 \]
 Combined with the fact that $k$ is odd, as observed above, this implies that $m=2\mu$ is even. 

 We can rewrite the action of $c$ on $H$ as follows: As $c$ commutes with the center $Z$, which has index $2$ in $H$, it suffices to compute   
 \[
 ch_0c^{-1} = h_0^{m+1} = h_0 \cdot (h_0^2)^{m/2}=h_0 z_1, 
 \]
 where $z_1 = z_0^{m/2}$ is the unique element of order $2$ in $Z$. 
 This proves 
 \[
 chc^{-1} = \begin{cases} h,& h\in Z\\ hz_1,&h\notin Z, \end{cases}
 \] as described in (ii).

\subsection{The case $\chi$ is not  faithful}  
Recall that $Z=\langle z_0: z_0^m=1\rangle$ is cyclic of order $m$, so that $\rho(z_0) = \zeta_mI$ for $\zeta_m$ a primitive $m$th root of unity. Since $\chi(H) = \chi(Z)$ by assumption and $\chi$ is faithful on $Z$, ker~$\chi$ is generated by an element $h_1 \in H$ of order $2$. As $Z$ commutes with $h_1$ and $Z$ intersects $\langle h_1 \rangle$ trivially, we have 
\[
H=Z\coprod h_1 Z = Z \times \langle  h_1 \rangle = {\rm ker}~ \delta \times {\rm ker}~ \chi \cong \Z/m\Z \times \Z/2\Z.
\]
The action of $\rho$ on $H$ is given by  
\begin{equation}\label{Matrices for H}
\rho(z_0^j) =  \zeta_m^jI, \qquad \rho(h_1z_0^j) = \zeta_m^j \begin{pmatrix} 1&0\\0&-1\end{pmatrix},\qquad 0\leq j\leq m-1. 
\end{equation}

We prove that $m$ is even: Conjugating by $c$, we must have $ch_1c^{-1}\in H$, so that 
\[
\rho(c)\rho(h_1)\rho(c)^{-1}  = \begin{pmatrix} 0& \chi(c^2) \\ 1 &0 \end{pmatrix}\begin{pmatrix} 1&0\\0&-1\end{pmatrix}
  \begin{pmatrix} 0& 1 \\ \chi(c^2)^{-1} &0 \end{pmatrix} = - \begin{pmatrix} 1&0\\0&-1\end{pmatrix} \in \rho(H),
\]
that is,
\[
-\begin{pmatrix} 1&0\\0&-1\end{pmatrix} = \zeta_m^j \begin{pmatrix} 1&0\\0&-1\end{pmatrix}
\]
for some $j$, forcing $\zeta_m^j=-1$. But if $m$ is odd then $-1$ is not an  $m$-th root of unity, giving a contradiction. Hence $m$ must be even. Furthermore, the above computation shows that
%

\[
\rho(c)\rho(h_1)\rho(c)^{-1}  = -\rho(h_1) = \zeta_m^{m/2} \rho(h_1) = \rho(z_0^{m/2}h_1).
\]
Together with $c$ commuting with $Z$, we obtain the action of $c$ on $H$:
 \[
 chc^{-1} = \begin{cases} h,& h\in Z\\ hz_1,&h\notin Z, \end{cases}
 \]
where $z_1 = z_0^{m/2}$ is the unique element of order $2$ in $Z$, as described in (ii). 
\bigskip

\subsection{Completing the proof of Theorem \ref{structureofG}}
In conclusion,  $G$ is a semi-direct product  
\[
G= H\rtimes  \Z/2\Z 
\] with $H = \Z/2m\Z$ if $\chi$ is faithful on $H$ and $H = \Z/m\Z \times \Z/2\Z$ otherwise. This proves (iii). The evenness of $m$ is also established in both cases. This is (i). The proof of Theorem \ref{structureofG} is now completed.
\medskip

Several remarks are in order.
 
{\bf Remarks.} 1. Since the action of $c$ fixes the elements in $Z$ and interchanges $h$ and $z_1h$ for $h$ outside $Z$, this shows that $\Frob_v$ for the primes $v$ above $p$ inert in $K$ are in $Z$ and those above $p$ splitting in $K$, if not identity, are outside $Z$. 

2. For $h = c^2 \in H$, since $chc^{-1} = c^2 = h$, we have $c^2 \in Z$ by Theorem \ref{structureofG} (ii). Therefore $\delta(c^2) = 1$ because ker $\delta = Z$. 

3. The fixed field $F$ of ker $\delta = Z$ is a biquadratic extension of $\Q$ since $G/Z$ is a Klein-$4$ group by Theorem \ref{centerofG}. Thus $F$ contains three quadratic extensions of $\Q$, one of which is $K$, and $F=KM$ for any quadratic subfield $M \ne K$. Moreover, $\delta$, being the unique quadratic character on $\Gal(KM/K)$, is equal to the quadratic character $\delta_{KM/K}$ attached to the extension $KM/K$, which lifts the quadratic character $\delta_{M/\Q}$. This shows that  
$$ \chi^c = \chi \cdot  \delta_{KM/K}$$
for any quadratic subfield $M$ of $F$ other than $K$. In particular, since $c$ restricted to $F$ has order $2$, we may choose $M$ to be the fixed subfield of $c$ on $F$.

4. Suppose $\chi$ has order $r$. Raising both sides of $\chi^c = \chi \cdot \delta$ to the $r$th power implies $r$ even since $\delta$ has order $2$. This is in concert with statement (i) in Theorem \ref{structureofG}.

The following proposition gives different criteria for the faithfulness of $\chi$ on $H$.

\begin{prop}\label{faithful} Suppose $\chi$ has order $r \equiv 0 \mod 4$. The following statements are equivalent:

(a) $\chi$ is faithful on $H$.

(b) ker $\delta$ contains ker $\chi$, equivalently, the fixed field of ker $\chi$ contains that of ker $\delta$. 

(c) $\chi^{r/2}$ is $\delta$.

(d) $\chi^c = \chi^{1 + r/2}$.
\end{prop}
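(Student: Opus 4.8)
The plan is to establish the chain of equivalences $(a)\Leftrightarrow(b)$, $(b)\Leftrightarrow(c)$, $(c)\Leftrightarrow(d)$, working with $H$ in the two structural forms provided by Theorem~\ref{structureofG}(iii), and using that $\ker\delta=Z$ is cyclic of order $m=2\mu$ (Theorem~\ref{centerofG}). First I would pin down the arithmetic relations among the orders. Since $\chi$ has order $r$ and $\chi|_Z$ is faithful on the cyclic group $Z$ of order $m$, we get $m\mid r$; conversely $\chi(H)=\chi(Z)$ when $\chi$ is not faithful, so in case~(iiib) $r=m$, while in case~(iiia) $H\cong\Z/2m\Z$ is cyclic and $\chi$ faithful gives $r=2m$. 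The hypothesis $4\mid r$ then forces $4\mid m$ in the non-faithful case (so $z_1=z_0^{m/2}$ lies in $\ker\chi$ precisely when... — see below), and more importantly it guarantees $\chi^{r/2}$ has order exactly $2$, so it is a legitimate candidate to be $\delta$.

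For $(a)\Leftrightarrow(b)$: faithfulness of $\chi$ on $H$ means $\ker\chi=\{1\}$; since $\ker\chi\subseteq Z$ would be the content of (b) phrased as $\ker\chi\subseteq\ker\delta$, I would argue that if $\ker\chi\subseteq Z$ then, $\chi$ being already faithful on $Z$, $\ker\chi$ must be trivial, giving $(b)\Rightarrow(a)$; the reverse $(a)\Rightarrow(b)$ is immediate since the trivial subgroup is contained in everything. The "equivalently" clause about fixed fields is just the Galois correspondence inclusion-reversal, so I would dispatch it in one sentence. For $(c)\Rightarrow(b)$: if $\chi^{r/2}=\delta$ then $\ker\chi\subseteq\ker\chi^{r/2}=\ker\delta$, which is (b). For $(b)\Rightarrow(c)$: assuming (a), so $H$ is cyclic of order $2m$ with $r=2m$, I compute $\chi^{r/2}=\chi^m$ on the generator $h_0$: $\chi(h_0)=\zeta_{2m}$, so $\chi^m(h_0)=\zeta_{2m}^m=-1=\delta(h_0)$, and both are trivial on $Z=\langle h_0^2\rangle$ since $\chi^m(h_0^2)=\zeta_{2m}^{2m}=1$; hence $\chi^{r/2}=\delta$ as characters of $H$. (One must double-check that $\chi^{r/2}$ really does kill $Z$ — this is where $4\mid r$, i.e. $m$ even, is used: $\chi^m$ restricted to $Z=\langle h_0^2\rangle$ has order $m/\gcd(m,m)=1$.)

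For $(c)\Leftrightarrow(d)$: this is purely formal once we are in the cyclic case. Given (c), $\chi^c=\chi\cdot\delta=\chi\cdot\chi^{r/2}=\chi^{1+r/2}$, which is (d); conversely (d) gives $\delta=\chi^{-1}\chi^c=\chi^{r/2}$. The only subtlety is that (d) presupposes the multiplicative relation $\chi^c=\chi\cdot\delta$ in a form that lets us solve for $\delta$, which we have by hypothesis throughout \S2.2. I expect the main obstacle to be bookkeeping in the non-faithful case (iiib): there $r=m$, so $r/2=m/2$, and I must verify that $\chi^{m/2}$ is \emph{not} equal to $\delta$ — indeed $\chi^{m/2}$ is nontrivial on $Z$ (it has order $2$ there, since $\chi|_Z$ has order $m$), whereas $\delta$ is trivial on $Z=\ker\delta$, so (c) fails, consistently with (a) failing. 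The same computation shows (b) fails: $\ker\chi=\langle h_1\rangle$ is not inside $Z$ by the direct-product description $H=Z\times\langle h_1\rangle$. Threading these non-faithful-case checks through each of the four implications — confirming that all of (a)--(d) are simultaneously false — is the fiddly part; the faithful case is a short explicit matrix/root-of-unity computation.
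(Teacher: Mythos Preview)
Your proposal is correct and follows essentially the same route as the paper. The paper proves the cycle $(a)\Rightarrow(c)\Rightarrow(b)\Rightarrow(a)$ together with the formal $(c)\Leftrightarrow(d)$, while you establish the pairwise equivalences $(a)\Leftrightarrow(b)$, $(b)\Leftrightarrow(c)$, $(c)\Leftrightarrow(d)$; the underlying arguments are the same (faithfulness of $\chi$ on $Z=\ker\delta$ for $(b)\Rightarrow(a)$, the containment $\ker\chi\subseteq\ker\chi^{r/2}$ for $(c)\Rightarrow(b)$, and the relation $\chi^c=\chi\cdot\delta$ for $(c)\Leftrightarrow(d)$). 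The paper's $(a)\Rightarrow(c)$ is slightly slicker than your $(b)\Rightarrow(c)$: it just observes that a cyclic $H$ has a unique quadratic character, so $\chi^{r/2}=\delta$ without computing on generators; also, your aside that ``$4\mid r$, i.e.\ $m$ even, is used'' in showing $\chi^m|_Z=1$ is not quite right --- that step is $m/\gcd(m,m)=1$ regardless of parity --- but this does not affect the argument.
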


Note that the assertion (d) is a condition for faithfulness of $\chi$ on $H$ in terms of $\chi$ alone. 

\begin{proof} Since $\chi^c = \chi \cdot \delta$, we have (c) $\Longleftrightarrow$ (d). 

(a) $\Rightarrow$ (c). Since $\chi$ is faithful on $H$, so $H \cong \chi(H)$ is cyclic and $\chi^{r/2}$, being the unique character on $H$ of order $2$, is equal to $\delta$. 

(c) $\Rightarrow$ (b). This is because ker $\chi^{r/2}$ contains ker $\chi$.

(b) $\Rightarrow$ (a). It is shown in \S2.4 that, if $\chi$ is not faithful on $H$, then ker $\chi = \langle  h_1 \rangle$ has order $2$ and ker $\delta = Z$ intersects ker $\chi$ trivially. So ker $\delta$ does not contain ker $\chi$. 
\end{proof}

As an immediate consequence, we have the following description of the fixed field $E$ of $H$.

\begin{cor} $E$ is the composition of the fixed field of ker $\chi$ and that of ker $\delta$. 
\end{cor}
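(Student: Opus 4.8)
The plan is to read off the claim directly from the structure established in this section. We have a faithful irreducible dihedral representation $\rho = \Ind_H^G \chi$ with $G = \Gal(E/\Q)$, so $E$ is the fixed field of $\ker\rho = \{1\}$, i.e. $E$ corresponds to the trivial subgroup of $G$. Let $E_\chi$ denote the fixed field of $\ker\chi$ (so $\Gal(E/E_\chi) = \ker\chi$) and let $E_\delta = F$ be the fixed field of $\ker\delta = Z$ (so $\Gal(E/E_\delta) = Z$, and by the Remark $E_\delta = KM$). The compositum $E_\chi E_\delta$ is the fixed field of $\ker\chi \cap \ker\delta$. Therefore the corollary is equivalent to the purely group-theoretic assertion
\[
\ker\chi \cap \ker\delta = \{1\}.
\]

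First I would dispose of the case where $\chi$ is faithful on $H$: then $\ker\chi = \{1\}$ trivially inside $H$, but one must check $\ker\chi = \{1\}$ in $G$, not just in $H$. Since $\rho$ is faithful on $G$ and $\chi$ determines $\rho|_H$ while $\rho(c)$ is the explicit antidiagonal matrix, any $g \in \ker\rho$ lies in $H$ (an element outside $H$ has antidiagonal $\rho$-image, hence is not the identity matrix), so $\ker\chi$ computed in $G$ equals $\ker\chi$ computed in $H$; faithfulness of $\chi$ on $H$ then gives $\ker\chi = \{1\}$ and the compositum is all of $E$, matching the fact that $E_\delta \subseteq E_\chi = E$ by Proposition~\ref{faithful}(b). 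Actually this already shows $E = E_\chi = E_\chi E_\delta$.

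For the case where $\chi$ is not faithful on $H$: by the analysis in \S2.4, $\ker\chi = \langle h_1\rangle$ has order $2$, $\ker\delta = Z$ is cyclic of order $m$, and $H = Z \times \langle h_1\rangle$, so $\ker\chi \cap \ker\delta = \langle h_1\rangle \cap Z = \{1\}$ since the product is direct. Hence $\ker\chi \cap \ker\delta = \{1\}$ again, so $E_\chi E_\delta$ is the fixed field of the trivial group, namely $E$. In both cases $E = E_\chi E_\delta$, which is the assertion of the corollary.

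The only mildly delicate point — and the one I would be careful to spell out — is the translation step: verifying that $\ker\chi$, a priori a subgroup of $H$, coincides with the kernel of "$\chi$ viewed as a function on the subgroup $H \subseteq G$" in the sense relevant to Galois theory (the subgroup of $G$ fixing $E_\chi$ pointwise), and symmetrically that no element of $G \setminus H$ can lie in the intersection. Both follow from the explicit matrix formulas: elements of $H$ act $\rho$-diagonally while a coset representative $c$ acts anti-diagonally, so $\ker\rho \subseteq H$ and hence the intersection of any two such "kernels" is genuinely computed inside $H$ where the direct-product (or cyclic) structure makes the computation immediate. There is no real obstacle; the corollary is essentially a bookkeeping consequence of Theorem~\ref{structureofG} and Proposition~\ref{faithful}.
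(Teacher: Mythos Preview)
Your argument is correct and matches the paper's intended reasoning: the corollary is stated there as an ``immediate consequence,'' and the underlying fact is precisely what you identify, namely $\ker\chi \cap \ker\delta = \{1\}$, which is trivial when $\chi$ is faithful and is established in \S2.4 (and recalled in the proof of Proposition~\ref{faithful}(b)$\Rightarrow$(a)) when $\chi$ is not faithful.

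One small remark: your final paragraph worrying about elements of $G\setminus H$ is unnecessary. The characters $\chi$ and $\delta$ are defined only on $H$, so $\ker\chi$ and $\ker\delta$ are by definition subgroups of $H\subseteq G$; the Galois correspondence for $E/\Q$ then gives $\Gal(E/E_\chi)=\ker\chi$ and $\Gal(E/E_\delta)=\ker\delta$ directly, with no ambiguity about contributions from outside $H$. This does not affect the correctness of your proof, only its economy.
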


\subsection{A proof of Theorem \ref{arithequiv}}

 By Proposition \ref{Mackey}, (a) implies (b) with $\delta = \delta_{KM/K}$, a quadratic character of $\Gal(KM/K)$ and hence of $G_K$. 

Now we consider the converse. Suppose $\chi^c = \chi \cdot \delta$ for some quadratic character $\delta$ of $G_K$. We shall find a quadratic extension $M$ of $\Q$ different from $K$ and a finite order non self-conjugate character $\eta$ of $G_M$ such that $\chi|_{G_K\cap G_M} = \eta|_{G_K \cap G_M}$, which in turn implies (a) by Proposition \ref{Mackey}. 

Write $\rho$ for the induced representation $\Ind_K^{\Q} \chi$, which is an irreducible degree-$2$ dihedral representation of $G_{\Q}$ since $\chi$ is not self-conjugate. Then by Remark 3 in \S2.5, there is a quadratic subfield $M \ne K$ of the fixed field of ker $\delta$, pointwisely fixed by $c$, such that we may choose $\delta = \delta_{KM/K}$. We proceed to find a character $\eta$ of $G_M$ with the desired properties.  

Let ${\bf v_1}$ be a basis of the space of $\chi$ and ${\bf v_2} = \rho(c)({\bf v_1})$ so that $\{\bf v_1, \bf v_2\}$ is a basis of the space $V$ of $\rho$. In this basis, we have 
\[
\rho(c) = \begin{pmatrix} 0& \chi(c^2) \\ 1 &0 \end{pmatrix}\quad{\rm and}
 \quad 
\rho(h) = \begin{pmatrix} \chi(h)& 0\\ 0&\chi^c(h) \end{pmatrix} ~{\rm for}~h \in G_K
\]
as in the proof of Theorem \ref{centerofG}. Since ker $\delta_{KM/K} = G_{KM}$, we have $\chi = \chi^c$ on $G_{KM}$ and $\chi = -\chi^c$ on $G_K \setminus G_{KM}$ by assumption. 

As $c$ fixes $M$ elementwise, $c \in G_M$ and $G_M = G_{KM} \cup G_{KM}c$. We study the restriction of $\rho$ to $G_M$. By Remark 2 of the previous subsection, $c^2$ lies in $G_{KM}$ so that $\rho(c^2)$ is scalar multiplication by a nonzero constant $a = \chi(c^2)$. Write $a = b^2$. Let ${\bf w_1} = {\bf v_1} + b^{-1}{\bf v_2}$ and ${\bf w_2} = {\bf v_1} - b^{-1}{\bf v_2}$. In terms of the new basis $\{{\bf w_1}, {\bf w_2}\}$ of $V$, we find $\rho(c){\bf w_1} = b{\bf w_1}$ and $\rho(c){\bf w_2} = -b{\bf w_2}$. Let $h \in G_{KM}$. With respect to the basis $\{{\bf w_1}, {\bf w_2}\}$ of $V$, the action of $\rho(h)$ is given by scalar multiplication $\left(\begin{matrix}\chi(h) & 0\\0 & \chi(h)\end{matrix}\right)$ as discussed before, while the action of $\rho(hc)$ is represented by $\left(\begin{matrix}b\chi(h) & 0\\ 0 & -b\chi(h)\end{matrix}\right)$, analogous to  $\rho$ restricted to $G_K$.  
This shows that the $1$-dimensional space spanned by ${\bf w_1}$ is invariant under $\rho(G_M)$ and hence the action is given by a character $\eta$ of $G_M$, and 
$\rho$ restricted to $G_M$ is a direct sum of two  characters $\eta$ and $\eta \cdot \delta_{KM/M}$ with $\eta|_{G_K \cap G_M} = \chi|_{G_K \cap G_M}$. Moreover, $\eta$ has finite order and $\eta$ is not self-conjugate, as desired.

\section{Characters of Galois groups and idele class characters} 
\subsection{Idele class characters}
We first set up notation and recall some facts. For a number field $K$, denote by $\Z_K$ its ring of integers and $U_K$ the group of units in $\Z_K$. The maximal ideals of $\Z_K$ give rise to finite places of $K$, while the infinite  places of $K$ come from the $r_1$ distinct real imbeddings and $r_2$ nonconjugate complex imbeddings of $K$. Together, they constitute the set $\Sigma(K)$ of all places of $K$. The completion of $K$ at a place $v \in \Sigma(K)$ is denoted $K_v$. When $v$ is finite, let $\mathcal O_v$ denote the ring of integers in $K_v$, $\mathcal U_v$ the group of units, and $\mathcal M_v$ the unique maximal ideal of $\mathcal O_v$. Any generator $\pi_v$ of $\mathcal M_v$ is a uniformizer of $K_v$. Then $K_v^\times = \mathcal U_v \times \langle  \pi_v \rangle$. Clearly $\mathcal U_v$ contains the group of global units $U_K$.   

The (topological) group of ideles of $K$, defined by 
$$ I_K = \{x=(x_v) \in \prod_{v \in \Sigma(K)} K_v^\times~|~ x_v \in \mathcal U_v~{\rm for~almost~all}~v \},$$ is the restricted product of $\{K_v^\times : v \in \Sigma(K)\}$ with respect to $\{\mathcal U_v : {\rm finite}~v \in \Sigma(K)\}$. 
The field $K^\times$ is diagonally imbedded in $I_K$ as a discrete subgroup, and the quotient $I_K/K^\times$ is called the idele class group of $K$. We may write $I_K = (I_K)_{\infty}(I_K)^{\infty}$, where $$(I_K)_{\infty}= \prod_{v \in \Sigma(K)~{\rm infinite}}K_v^\times$$ is the subgroup of ideles supported at the infinite places and $(I_K)^\infty$ is the subgroup of ideles supported at the finite places. 
 
A character $\chi$ of $I_K$ is a continuous homomorphism from $I_K$ to the unit circle in $\mathbb C^\times$. It can be expressed as a product of $\chi_v$, its restriction to $K_v^\times$, over all $v \in \Sigma(K)$. Note that $\chi_v$ at a finite place $v$ is determined by its values on the group of units $\mathcal U_v$ and one uniformizer $\pi_v$. Sometimes we write $\chi = \chi_\infty \chi^\infty$, where $\chi_\infty$ and $\chi^{\infty}$ are the restrictions of $\chi$ to $(I_K)_{\infty}$ and $(I_K)^{\infty}$, respectively. We discuss $\chi^\infty$ and $\chi_\infty$ in more detail below.

As a result of continuity, for almost all finite $v$, $\chi_v(\mathcal U_v) = 1$, in which case we say that $\chi_v$ is unramified, or $\chi$ is unramified at $v$. The set $S$ of finite places where $\chi$ is ramified is finite. At each $v \in S$, there is a smallest positive integer $n(\chi_v)$ such that $\chi_v$ is trivial on $1 + \mathcal M_v^{n(\chi_v)}$; call $\mathcal M_v^{n(\chi_v)}$ the conductor of $\chi_v$. The conductor of $\chi$ is the product $\ff_\chi = \prod_{v \in S} v^{n(\chi_v)}$, which is a nonzero ideal of $\mathbb Z_K$. Observe that $\prod_{v \in S} \mathcal U_v/(1 + \mathcal M_v^{n(\chi_v)}) \cong (\mathbb Z_K/\ff_\chi)^\times$, hence $\chi$ restricted to $\prod_{v \in S} \mathcal U_v$ induces a character $\xi=\xi(\chi)$ of $(\mathbb Z_K/\ff_\chi)^\times$. Moreover, since $\ff_\chi$ is the conductor of $\chi$, $\xi$ is a primitive character of $(\mathbb Z_K/\ff_\chi)^\times$ in the sense that it does not induce a character of $(\mathbb Z_K/\mathfrak a)^\times$ for any ideal $\mathfrak a$ of $\mathbb Z_K$ properly containing $\ff_\chi$. In conclusion, $\chi^\infty$ on the group of units $\prod_{v ~{\rm finite}} \mathcal U_v$ lifts a primitive character $\xi$ on $(\mathbb Z_K/\ff_\chi)^\times$, where $\ff_\chi$ is the conductor of $\chi$, and $\chi^\infty$ is determined by $\xi$ and the values $\chi_v(\pi_v)$ for all finite places $v$.  

Now {\it assume $\chi$ has finite order}. We discuss $\chi_\infty$. If $v$ is a real place of $K$, then $K_v = \mathbb R$ and $\chi_v$ is either trivial or the sign function on $\mathbb R^\times$; while if $v$ is a complex place, then $K_v = \mathbb C$ and $\chi_v$ is always trivial on $\mathbb C^\times$.

A character $\chi$ of $I_K$ is called an idele class character of $K$ if it is trivial on $K^\times$. This is a strong constraint on $\chi$. For example, $\chi_\infty$ and $\xi(\chi)$ are related by $\chi(U_K)=1$. Therefore, an idele class character $\chi$ of $K$ is determined by $\chi_\infty$, $\xi(\chi)$ and $\chi_v(\pi_v)$ for finitely many $v$ representing the ideal class group of $K$. In particular, if $K$ has class number one, then $\chi$ is determined by $\chi_\infty$ and $\xi(\chi)$. 

The theorem below gives other occasions that an idele class character $\chi$ is determined by $\xi(\chi)$ and $\chi_\infty$.

\begin{thm}\label{extension} Let $\fa$ be a nonzero ideal of the ring of integers $\Z_K$ of a number field $K$. Let $\xi$ be a primitive character of $(\Z_K/\fa)^\times$ of even order $r$ which lifts to a character $\chi_U$ on the group of units in $(I_K)^\infty$. Suppose that there is a character $\chi_\infty$ on $(I_K)_\infty$ with order $\le 2$ such that the character $\chi_J :=\chi_\infty \times \chi_U$ on 
$J := (I_K)_\infty \prod_{v \in \Sigma(K)~{finite}} \mathcal U_v$ is trivial on the group of global units $U_K$. If the order $r$ of $\xi$ is coprime to the class number of $K$, then $\chi_J$ on $J$ has a unique extension to a character $\chi$ on $I_K$ of order $r$, conductor $\ff_\chi = \fa$,  and trivial on $K^\times$.
\end{thm}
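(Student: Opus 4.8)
The plan is to produce the extension $\chi$ on all of $I_K$ by choosing its values $\chi_v(\pi_v)$ on uniformizers, one finite place at a time, in a way that is forced by triviality on $K^\times$, and then to check that the resulting character has order exactly $r$ and conductor exactly $\fa$. First I would recall the exact sequence relating the idele class group to the ideal class group: writing $J = (I_K)_\infty \prod_{v\ \mathrm{finite}} \mathcal U_v$, we have $I_K/(K^\times J) \cong \mathrm{Cl}(K)$, the ideal class group, via the map sending an idele to the class of the ideal it generates. Pick ideles $\pi^{(1)},\dots,\pi^{(h)}$ representing the classes of $\mathrm{Cl}(K)$, each supported at a single finite place unramified for $\xi$ (possible since every ideal class contains infinitely many prime ideals), so that $I_K = \bigsqcup_{i} K^\times J \pi^{(i)}$. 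To extend $\chi_J$ to $I_K$ we must specify the values $\chi(\pi^{(i)})$ subject to the compatibility forced by overlaps: whenever $\alpha \pi^{(i)} j = \pi^{(i')}$ for some $\alpha \in K^\times$, $j \in J$ — which happens precisely when $i=i'$ and the corresponding ideal relation is a principal ideal $(\alpha)$ — we need $\chi(\pi^{(i)})\,\chi_J(j) = \chi(\pi^{(i)})$ consistently, and more to the point, for $i=1$ (the trivial class) we need $\chi(\pi^{(1)})$ compatible with the fact that $\pi^{(1)}$ already lies in $K^\times J$. The cleanest route: the subgroup $K^\times J \subseteq I_K$ has index $h = |\mathrm{Cl}(K)|$, and $\chi_J$ together with $\chi|_{K^\times} = 1$ defines a character on $K^\times J$ (this is well-defined precisely by the hypothesis that $\chi_J$ is trivial on $U_K = K^\times \cap J$); call it $\psi$. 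So the real task is to extend the character $\psi$ on the finite-index subgroup $K^\times J$ to $I_K$.

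The main step is to show such an extension exists \emph{with order still equal to $r$}, and is unique. Existence of \emph{some} extension of a character of a subgroup to a finite abelian quotient is automatic (characters of subgroups of finite abelian groups extend, by divisibility of $\C^\times$/the circle), giving an extension $\chi$ whose order is a multiple of $r$ dividing $rh$. To cut the order down to exactly $r$: since $\gcd(r,h)=1$, I would use that $\chi^h$ restricted to $K^\times J$ equals $\psi^h$, and since $\psi$ has order dividing $r$ with $\gcd(r,h)=1$ the map $x \mapsto x^h$ is a bijection on the (cyclic, order-dividing-$r$) image of $\psi$; concretely, pick the extension and then replace $\chi$ by $\chi^{h'}$ where $h h' \equiv 1 \pmod r$ — since $I_K/K^\times J$ has exponent dividing $h$, raising to the $h'$ power fixes $\psi$ on $K^\times J$ (because $h' \equiv h'^{} $ acts as identity on order-$r$ elements after composing appropriately — more carefully, $\psi^{hh'} = \psi$ since $hh'\equiv 1\bmod r$), while on the quotient $I_K/K^\times J$ it kills precisely the part whose order divides $h$. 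This forces the new $\chi$ to have order dividing $r$, hence exactly $r$. For uniqueness: two extensions differ by a character of $I_K/K^\times J \cong \mathrm{Cl}(K)$, a group of order $h$; if both extensions have order dividing $r$, their ratio has order dividing $\gcd(r,h) = 1$, so they coincide.

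Finally I would verify the conductor claim. The conductor of $\chi$ is determined entirely by $\chi^\infty$ on $\prod_v \mathcal U_v$, which agrees with $\chi_U$, which by hypothesis lifts the \emph{primitive} character $\xi$ of $(\Z_K/\fa)^\times$; primitivity of $\xi$ is exactly the statement that $\ff_\chi = \fa$, so nothing further is needed beyond unwinding the definition of conductor recalled in \S3.1. The step I expect to be the real obstacle is the order-control argument in the second paragraph: one must be careful that ``raising to a power coprime to $h$'' genuinely produces an extension of the \emph{same} $\psi$ and genuinely has order $r$ rather than merely dividing $r$ — the lower bound $r \mid \mathrm{ord}(\chi)$ is clear since $\chi|_{K^\times J} = \psi$ has order $r$, so equality follows, but organizing the cyclic-group arithmetic on the quotient $\mathrm{Cl}(K)$ cleanly (rather than invoking structure theory of finite abelian groups ad hoc) is where care is needed. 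An alternative, perhaps cleaner, formulation of the whole argument: the restriction map $\mathrm{Hom}(I_K/K^\times,\ \C^\times)[r] \to \mathrm{Hom}(J/U_K,\ \C^\times)[r]$ on $r$-torsion is an isomorphism because its kernel and cokernel are controlled by $\mathrm{Cl}(K)$ and its Pontryagin dual, both of order prime to $r$; I would likely present it this way to avoid element-chasing.
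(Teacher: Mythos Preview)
Your proposal is correct; the final Pontryagin-dual formulation (restriction on $r$-torsion characters is an isomorphism because kernel and cokernel are controlled by $\mathrm{Cl}(K)$, of order prime to $r$) is clean and complete. The paper takes a different, explicit route: for each finite $v$ it fixes a generator $\beta_v\in\Z_K$ of the principal ideal $v^h$ and \emph{defines} $\chi_v(\pi_v)$ by the formula $\chi_\infty(\beta_v)^{-e}\prod_{w\mid\fa}\chi_w(\beta_v)^{-e}$ (with $eh\equiv 1\bmod r$), then checks directly that $\chi(\beta_v)=1$ for every $v$, so that $\chi^h$ and hence $\chi=\chi^{eh}$ is trivial on $K^\times$; uniqueness is argued essentially as you do. Your approach is shorter and more conceptual; theirs is constructive and yields an explicit description of $\chi_v(\pi_v)$ in terms of $\xi$ and $\chi_\infty$, which they actually invoke later in \S5 to compute values of $\chi$ at specific primes.

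One caution on your middle paragraph: replacing an arbitrary extension $\chi$ by $\chi^{h'}$ with $hh'\equiv 1\bmod r$ does not restrict back to $\psi$, since $\chi^{h'}|_{K^\times J}=\psi^{h'}\ne\psi$ unless $h'\equiv 1\bmod r$. The exponent that works is $fh$ with $er+fh=1$: then $\psi^{fh}=\psi$ (as $fh\equiv 1\bmod r$) while $(\chi^{fh})^r=(\chi^r)^{fh}=1$ because $\chi^r$ factors through $\mathrm{Cl}(K)$ of order $h$. You already flagged this as the delicate step, and your dual formulation sidesteps it entirely, so I would present that version.
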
  
\begin{proof} At each finite place $v$ of $K$, fix a uniformizer $\pi_v$. To extend $\chi_J$ to a character $\chi = \prod_{v \in \Sigma(K)} \chi_v$ on $I_K$, it remains to define $\chi^\infty(\pi_v) = \chi_v(\pi_v)$ so that $\chi$ is trivial on $K^\times$. It will be clear from the definition that the resulting $\chi$ has order $r$ and conductor $\ff_\chi = \fa$. By assumption, the class number $h = h(K)$ is coprime to the order $r$ of $\chi$, so there is a positive integer $e$ such that $eh \equiv 1 \mod r$. By construction, $\chi_J$ has order $r$, hence $(\chi_J)^{he} = \chi_J$. We shall take advantage of the fact that each ideal of $\mathbb Z_K$ raised to the $h$-th power is principal (since $h$ is the class number of $K$) to facilitate our definition of $\chi_v(\pi_v)$. 

For each finite place $v$ of $K$, choose an element $\beta_v \in \mathbb Z_K$ which generates the ideal $v^h$, i.e., $v^h = (\beta_v)$. Note that $\beta_v$ is a unit at all finite places $w \in \Sigma(K)$ outside $v$, and at $v$, $\beta_v = u_v \pi_v^h$ for some unit $u_v$ in $K_v$. For $v \nmid \fa$, define 
$$\chi_v(\pi_v) =\chi_\infty(\beta_v)^{-e}\prod_{w|\fa}\chi_w(\beta_v)^{-e}$$
so that $\chi_v(\pi_v)^h = \chi_\infty(\beta_v)^{-1}\prod_{w|\fa}\chi_w(\beta_v)^{-1}$ and 
$$\chi(\beta_v) = \chi_\infty(\beta_v)\cdot \prod_{w|\fa}\chi_w(\beta_v)\cdot \chi_v(\beta_v)= \chi_\infty(\beta_v)\cdot\prod_{w|\fa}\chi_w(\beta_v)\cdot \chi_v(\pi_v)^h =1;$$  
while for $v | \fa$, define
$$\chi_v(\pi_v) = \chi_\infty(\beta_v)^{-e}\chi_v(u_v)^{-e}\prod_{w | \fa, ~w \ne v}\chi_w(\beta_v)^{-e}$$
so that $\chi_v(\pi_v)^h = \chi_\infty(\beta_v)^{-1}\chi_v(u_v)^{-1}\prod_{w | \fa, ~w \ne v}\chi_w(\beta_v)^{-1}$ and 
$$\chi(\beta_v) = \chi_\infty(\beta_v)\prod_{w | \fa}\chi_w(\beta_v) = \chi_\infty(\beta_v)\chi_v(u_v \pi_v^h)\prod_{w|\fa,~w\ne v}\chi_w(\beta_v) = 1.$$
Thus we have extended $\chi_J$ to a character $\chi$ of $I_K$ trivial on $U_K$ of order $r$, and with conductor $\ff_\chi = \fa$. Moreover $\chi^h$ is trivial on nonzero elements in $\Z_K$ and hence on $K^\times$. Raising it to the $e$-th power shows that $\chi$ is trivial on $K^\times$, as desired.

To prove the uniqueness of $\chi$, let $\chi'$ be another extension of $\chi_J$ with the desired properties. Then $\mu = \chi' \chi^{-1}$ is an idele class character of $K$ which is trivial on $J$. Hence $\mu_\infty$ is trivial, $\mu$ is unramified everywhere and has order dividing $r$. Therefore, at a finite place $v$, we have $1 = \mu(\beta_v) =\mu_v(\beta_v) = \mu_v(\pi_v)^h$ since $\beta_v$ is a unit outside $v$. Raising it to the $e$-th power gives $\mu_v(\pi_v) = 1$ for all finite $v$. Thus $\mu$ is trivial, in other words, $\chi = \chi'$. 
\end{proof}

We illustrate some constraints on $\xi(\chi)$ for an idele class character.

\begin{prop}\label{constraint-on-xi} Let $\chi$ be an idele class character of $K$ of finite order with conductor $\ff_\chi$. Let $\xi=\xi(\chi)$ be the primitive character of $(\Z_K/\ff_\chi)^\times$ so that $\chi^\infty$ on units in $(I_K)^\infty$ is the lift of $\xi$. Then the following hold. 

(i) $\xi(u) = \chi_\infty(u)$ for each $u \in U_K$, $\xi(U_K)\subseteq \langle -1 \rangle$, and $\xi$ is trivial on units in $U_K$ which are positive under all real imbeddings of $K$.

(ii) If $K$ is a CM field, then $\chi_\infty$ is trivial and $\xi(U_K)=1$.

(iii) For $K$ real quadratic, if there is a fundamental unit $\epsilon_K$ with norm $N_{K/\Q}(\epsilon_K) = -1$, then $\chi_\infty$ is uniquely determined by $\xi(\epsilon_K)$ and $\xi(-1)$; if $N_{K/\Q}(U_K)=1$, then $\chi_\infty$ is determined by $\xi(-1)$ and $\chi^\infty(\alpha)$ for any $\alpha \in K^\times$ with negative norm. 
\end{prop}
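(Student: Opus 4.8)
The plan is to exploit the single defining property of an idele class character—triviality on the diagonally embedded $K^\times$, hence on the global units $U_K$—and to test it against carefully chosen elements of $K^\times$. First I would record, for any $u\in U_K$, that the diagonal idele of $u$ lies in $\mathcal U_v$ at every finite $v$, so the unramified factors of $\chi$ ignore it and, via the isomorphism $(\Z_K/\ff_\chi)^\times\cong\prod_{v\mid\ff_\chi}\mathcal U_v/(1+\mathcal M_v^{n(\chi_v)})$, the finite part satisfies $\chi^\infty(u)=\xi(u)$, writing $\xi(u)$ for the value of $\xi$ on $u\bmod\ff_\chi$. Then $1=\chi(u)=\chi_\infty(u)\xi(u)$; since $\chi$ has finite order, $\chi_\infty$ is a product of trivial characters at the complex places and sign characters at some real places, so $\chi_\infty(u)\in\{\pm1\}$ and $\xi(u)=\chi_\infty(u)$. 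This gives the identity in (i) and $\xi(U_K)\subseteq\langle-1\rangle$ at once, and for a $u$ positive under all real embeddings every real-place factor of $\chi_\infty(u)$ is $+1$, so $\xi(u)=1$.

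Part (ii) then follows with no further work: a CM field is totally imaginary, so $(I_K)_\infty$ is a product of copies of $\C^\times$, on which a continuous finite-order character must be trivial; hence $\chi_\infty$ is trivial and $\xi(U_K)=\chi_\infty(U_K)=1$ by (i). For (iii), with $K$ real quadratic I would write $(I_K)_\infty=K_{v_1}^\times\times K_{v_2}^\times\cong\R^\times\times\R^\times$, so $\chi_\infty=\chi_{v_1}\times\chi_{v_2}$ is pinned down by the pair $(a_1,a_2)\in\{0,1\}^2$ recording whether each $\chi_{v_i}$ is trivial ($a_i=0$) or the sign character ($a_i=1$); the task is exactly to recover $(a_1,a_2)$.

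Using $U_K=\langle-1\rangle\times\langle\epsilon_K\rangle$ and (i), the value $\xi(-1)=\chi_\infty(-1)=(-1)^{a_1+a_2}$ determines $a_1+a_2\bmod 2$. If $N_{K/\Q}(\epsilon_K)=-1$, then $\sigma_1(\epsilon_K)$ and $\sigma_2(\epsilon_K)$ have opposite signs, say $\sigma_j(\epsilon_K)<0$; then $\chi_\infty(\epsilon_K)=(-1)^{a_j}$, so $\xi(\epsilon_K)=\chi_\infty(\epsilon_K)$ determines $a_j$, and together with $\xi(-1)$ the other exponent, hence $\chi_\infty$. If instead $N_{K/\Q}(U_K)=1$, then $\sigma_1(\epsilon_K)$ and $\sigma_2(\epsilon_K)$ share a sign and $\chi_\infty(\epsilon_K)=1$ says nothing, so units alone are insufficient; I would then take any $\alpha\in K^\times$ with $N_{K/\Q}(\alpha)<0$, so $\sigma_1(\alpha)$ and $\sigma_2(\alpha)$ have opposite signs, say $\sigma_i(\alpha)<0$. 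Triviality of $\chi$ on $\alpha$ gives $1=\chi_\infty(\alpha)\chi^\infty(\alpha)$ with $\chi_\infty(\alpha)\in\{\pm1\}$, so $\chi^\infty(\alpha)=\chi_\infty(\alpha)=(-1)^{a_i}$; thus $\chi^\infty(\alpha)$ and the known signs of the $\sigma_k(\alpha)$ pin down $a_i$, and with $\xi(-1)$ both exponents, hence $\chi_\infty$.

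I do not expect a real obstacle—the whole argument is bookkeeping around one identity. The points that need care are: the verification that the ramified local factors of $\chi$ at a unit assemble, via the CRT decomposition of $(\Z_K/\ff_\chi)^\times$, into exactly the value of the primitive character $\xi$; and, in the last case of (iii), tracking which real embedding is negative on $\alpha$ (and on $\epsilon_K$), together with the observation that $\chi^\infty(\alpha)$ is automatically $\pm1$ even though individual $\chi_v(\alpha)$ need not be.
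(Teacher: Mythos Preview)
Your proof is correct and follows essentially the same route as the paper's: both hinge on the identity $\chi^\infty(u)=\xi(u)$ for $u\in U_K$, the triviality $\chi(u)=1$, and the fact that each archimedean component of a finite-order character is trivial or the sign, with the same case split in (iii) according to the sign of $N_{K/\Q}(\epsilon_K)$. One tiny slip: in the case $N_{K/\Q}(U_K)=1$ you write ``$\chi_\infty(\epsilon_K)=1$'', but if both real embeddings of $\epsilon_K$ are negative you get $\chi_\infty(\epsilon_K)=(-1)^{a_1+a_2}=\xi(-1)$ instead; either way it carries no information beyond $\xi(-1)$, so your conclusion stands.
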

\begin{proof} Note that $\chi^\infty(u) = \xi(u)$ for any $u \in U_K$ since $\chi$ is unramified outside the support of $\ff_\chi$. 

(i) For a general $K$, $\chi_v$ at each infinite place $v$ has order at most $2$ so that $\chi_\infty^2$ is trivial. Thus $\xi(u) = \chi_\infty(u)^{-1} = \chi_\infty(u)$ for each $u \in U_K$.
Hence $\chi_\infty(-1) = \xi(-1).$ Further, since $\chi_v$ at a real place $v$ is either trivial or the sign function, $\chi_v(u) = 1$ if $u$ is positive under the imbedding $v$. This shows that $\xi(u) = \chi_\infty(u) = 1$ if the unit $u$ has positive images under all real imbeddings of $K$. 

(ii) Suppose $K$ is a CM field. Then all infinite places of $K$ are complex and hence $\chi_\infty$ is trivial.  Given $u \in U_K$, from $\chi(u) = \chi_\infty(u)\chi^\infty(u) = 1$ we conclude  $\chi^\infty(u) = \xi(u)=1$. So $\xi(U_K) = 1$.

(iii) Now assume $K$ is real quadratic. The group of global units $U_K = \langle -1 \rangle \times\langle  \epsilon_K \rangle$, where $\epsilon_K$ is a fundamental unit of infinite order. The field $K$ has two real imbeddings $\infty_1$ and $\infty_2$, and each $\chi_{\infty_i}$ is either trivial or the sign function on $\mathbb R^\times$. From $\chi_\infty(-1)=\xi(-1)=\pm 1$ we conclude 
$\chi_{\infty_1} \ne \chi_{\infty_2}$ if $\xi(-1) = -1$, and $\chi_{\infty_1} = \chi_{\infty_2}$ if $\xi(-1) = 1$. If $N_{K/\Q}(\epsilon_K) = \infty_1(\epsilon_K)\infty_2(\epsilon_K) = -1$, then exactly one of $\infty_1(\epsilon_K), \infty_2(\epsilon_K)$, say, $\infty_1(\epsilon_K)$, is positive so that $\chi_{\infty_1}(\epsilon_K)=1$ always holds, and $\chi_{\infty_2}$ is determined by $\chi_{\infty_2}(\epsilon_K)= \chi_{\infty_1}(\epsilon_K)\chi_{\infty_2}(\epsilon_K) = \chi_\infty(\epsilon_K)= \xi(\epsilon_K)$. This in turn determines $\chi_{\infty_1}$ so that $\chi_\infty (-1) = \xi(-1)$. Hence $\chi_\infty$ is uniquely determined by $\xi(-1)$ and $\xi(\epsilon_K)$ in this case. On the other hand, if $N_{K/\Q}(U_K) = 1$, the information of $\xi$ on $U_K$ is not enough to determine $\chi_\infty$. Instead, we choose any $\alpha \in K^\times$ with negative norm. Then $\chi^\infty(\alpha) = \chi_\infty(\alpha) = \pm 1$ since $\chi(\alpha) = 1$. The above argument for $\epsilon_K$ with norm $-1$ is easily modified to pin down $\chi_\infty$.  
\end{proof} 

Suppose $K$ is quadratic over $\Q$. In particular, given a primitive character $\xi$ on $(\Z_K/\fa)^\times$ satisfying the requirement (ii) for $K$ imaginary quadratic, and requirement (i) for $K$ real quadratic containing a fundamental unit with negative norm, 
 then, by the proposition above, it determines a unique $\chi_\infty$ on $(I_K)_\infty$ such that the character $\chi_J$ on $J$ in Theorem \ref{extension} exists. If, in addition, the order of $\xi$ is coprime to the class number of $K$, then we obtain a unique idele class character $\chi$ of $K$ solely determined by $\xi$. We summarize this discussion in the corollary below, which will be used later.

\begin{cor}\label{quadratic} Let $K$ be a quadratic extension of $\Q$. Let $\fa$ be a nonzero ideal of $\Z_K$ and $\xi$ a primitive character of $(\Z_K/\fa)^\times$ of order $2^e$ for an integer $e \ge 1$. Suppose one of the following two conditions hold:

(a) $K$ is imaginary with odd class number and $\xi(U_K)=1$;

(b) $K=\Q(\sqrt d)$ for a prime $d \equiv 1 \mod 4$ or $d=2$ is real quadratic, and $\xi(U_K) \subseteq \langle -1 \rangle$.

\noindent Then there is a unique idele class character $\chi$ of $K$ with conductor $\ff_\chi = \fa$ and order $2^e$ such that $\chi^\infty$ on units of $(I_K)^\infty$ lifts $\xi$.
\end{cor}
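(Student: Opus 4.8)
The plan is to deduce the corollary directly from Theorem~\ref{extension}, applied with $r = 2^e$. That theorem needs three inputs: (1) $\xi$ has even order --- automatic here; (2) there is an archimedean character $\chi_\infty$ of order $\le 2$ making $\chi_J := \chi_\infty \times \chi_U$ trivial on the global units $U_K$, where $\chi_U$ is the tautological lift of $\xi$ to $\prod_{v \text{ finite}} \mathcal U_v$ (trivial away from the support of $\fa$), which exists unconditionally; and (3) the order of $\xi$ is coprime to $h(K)$. The first thing I would record is that the fields occurring in (a) and in (b) have \emph{odd} class number: in (b) the discriminant of $\Q(\sqrt d)$ is divisible by a single rational prime (namely $d$ if $d \equiv 1 \bmod 4$, and $2$ if $d = 2$), so by genus theory the narrow class group has trivial $2$-torsion, hence odd order, and a fortiori $h(K)$ is odd; in (a) oddness is hypothesized. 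Thus (3) holds in both cases since $\ord(\xi) = 2^e$. Genus theory also gives, in case (b), that the narrow and ordinary class groups coincide, and therefore that $U_K$ contains a fundamental unit $\epsilon_K$ of norm $N_{K/\Q}(\epsilon_K) = -1$ (for $d = 2$ one may take $\epsilon_K = 1 + \sqrt 2$).

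For (a): a quadratic imaginary field is a CM field, so its single archimedean place is complex and the only candidate is $\chi_\infty = 1$, of order $\le 2$. Then for $u \in U_K$, $\chi_J(u) = \chi_U(u) = \xi(u \bmod \fa) = 1$ by the hypothesis $\xi(U_K) = 1$, so $\chi_J$ is trivial on $U_K$, and Theorem~\ref{extension} yields an idele class character $\chi$ with $\ff_\chi = \fa$, order $2^e$, trivial on $K^\times$, and with $\chi^\infty$ on finite units lifting $\xi$. For uniqueness: any idele class character with conductor $\fa$ and $\chi^\infty|_{\text{units}}$ lifting $\xi$ has $\chi_\infty = 1$ by Proposition~\ref{constraint-on-xi}(ii), hence the same $\chi_J$, hence agrees with the one just produced, by the uniqueness part of Theorem~\ref{extension}.

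For (b): $K$ has two real places $\infty_1, \infty_2$; normalise so that $\infty_1(\epsilon_K) > 0$ and write $U_K = \langle -1 \rangle \times \langle \epsilon_K \rangle$. Following the recipe in Proposition~\ref{constraint-on-xi}(iii), I would define $\chi_\infty = \chi_{\infty_1} \chi_{\infty_2}$, each factor trivial or the sign character, by $\chi_{\infty_1}(\epsilon_K) = 1$, $\chi_{\infty_2}(\epsilon_K) = \xi(\epsilon_K)$, and $\chi_\infty(-1) = \xi(-1)$; this is consistent and well defined precisely because $\xi(\epsilon_K), \xi(-1) \in \langle -1 \rangle$ by hypothesis, and it has order $\le 2$. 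Then $\chi_J(-1) = \chi_\infty(-1)\xi(-1) = \xi(-1)^2 = 1$ and $\chi_J(\epsilon_K) = \chi_\infty(\epsilon_K)\xi(\epsilon_K) = \xi(\epsilon_K)^2 = 1$, so $\chi_J$ is trivial on $U_K$, and Theorem~\ref{extension} again delivers the desired $\chi$. Uniqueness follows as before, now using that Proposition~\ref{constraint-on-xi}(iii) pins $\chi_\infty$ down in terms of $\xi$.

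The verifications above are all short and mechanical; the only nontrivial ingredient is the genus-theory fact that these specific quadratic fields have odd (narrow) class number, which is exactly what makes (3) hold and, in the real case, what furnishes the norm $-1$ unit required to apply Proposition~\ref{constraint-on-xi}(iii). I expect that --- rather than any computation --- to be the load-bearing step, and it is in fact precisely why the corollary restricts to this list of fields.
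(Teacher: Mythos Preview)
Your proof is correct and follows essentially the same route as the paper: verify the hypotheses of Theorem~\ref{extension} by (i) using genus theory to get odd class number in case~(b), and (ii) invoking Proposition~\ref{constraint-on-xi} to pin down $\chi_\infty$. The only cosmetic difference is that in case~(b) you deduce the existence of a norm~$-1$ unit \emph{from} the oddness of the narrow class number (via $h^+(K)/h(K)\in\{1,2\}$), whereas the paper cites the norm~$-1$ unit first and then concludes $h(K)=h^+(K)$ is odd; both orderings are valid.
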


\begin{proof} The conclusion will follow from Theorem \ref{extension} provided the assumptions there hold. This is obvious for $K$ in (a) and $K = \Q(\sqrt 2)$ in (b). For $K$ in (b), since $d \equiv 1 \mod 4$ is a prime, there is a fundamental unit $\epsilon_K$ of $K$ with $N_{K/\Q}(\epsilon_K) = -1$ (see \cite[Chapter XI, Theorem 3]{Cohn}), hence the class number $h(K)$ of $K$ agrees with the narrow class number of $K$, whose $2$-rank $r(K)$ is one less than the number of prime factors of the discriminant $d_K = d$ of $K$ by Gauss's genus theory (see \cite[Chapter XIII, \S3]{Cohn}).  Therefore $r(K)=1-1 = 0$, implying $h(K)$ is odd.  
Hence by Proposition \ref{constraint-on-xi}, $\xi(-1)$ and $\xi(\epsilon_K)$ uniquely determine $\chi_\infty$ so that all required conditions in Theorem \ref{extension} are satisfied. 
\end{proof}

\subsection{Characters of Galois groups and idele class characters}  
Let $\chi$ be a character of the absolute Galois group $G_K$ of $K$ of finite order. Then the fixed field $F$ of ker $\chi$ is a finite cyclic extension of $K$ with Galois group $\Gal(F/K)$. Let $S$ be the set of finite places of $K$ ramified in $F$. For each finite place $v$ of $K$ outside $S$, there is the associated Frobenius element $\Frob_v$ in $\Gal(F/K)$. The Chebotarev density theorem says that these Frobenius elements are uniformly distributed among the elements in $\Gal(F/K)$. 
By class field theory, the global Artin reciprocity map $\psi_K$ gives rise to an isomorphism from the quotient $I_K/K^\times N_{F/K}(I_F)$ to $\Gal(F/K)$ and $\chi$ on $\Gal(F/K)$ is transported to the idele class character of $K$, also denoted by $\chi$, with kernel $K^\times N_{F/K}(I_F)$ and $\chi_v(\pi_v) = \chi(\Frob_v)$ for all finite $v$ outside $S$. Moreover all finite order idele class characters of $K$ arise this way. Note that at finite $v$ outside $S$, $\mathcal U_v$ is contained in $N_{F/K}(I_F)$ so that $\chi$ is unramified at $v$, while for $v \in S$, $\mathcal U_v$ is not contained in $N_{F/K}(I_F)$ so that $\chi$ is ramified at $v$. Hence $S$ is the support of the conductor $\ff_\chi$ of $\chi$. By strong approximation theorem, the above information on $\chi$ uniquely determines $\chi_\infty$ and $\chi_v$ for $v \in S$.

\medskip

Now we specify $K$ to be a quadratic extension of $\Q$ with $\Gal(K/\Q) = \langle c \rangle$  as in \S2. We  reinterprete the discussion on characters of Galois groups in \S2 in terms of idele class characters. By class field theory, the character $\chi$ of $H$ and its conjugate $\chi^c$ as before correspond to the idele class characters $\chi$ and $\chi^c$ of $K$ with respective conductors $\ff_\chi$ and $\ff_{\chi^c}$, and, as discussed above, their respective restrictions to the group of units in $(I_K)^\infty$ lift primitive characters $\xi=\xi(\chi)$ of $(\Z_K/\ff_\chi)^\times$ and $\xi' = \xi(\chi^c)$ of $(\Z_K/\ff_{\chi^c})^\times$, respectively. The involution $c$ on $K$ swaps the two maximal ideals $v, v^c$ above a prime $p$ splitting in $K$ and induces an isomorphism between $K_v$ and $K_{v^c}$ (both isomorphic to $\Q_p$), while it fixes the unique maximal ideal $v$ above a prime $p$ inert in $K$ and induces the Frobenius automorphism on $F_v$ over $\Q_p$. Consequently, $c$ maps the conductor $\ff_\chi$ to $(\ff_\chi)^c = \ff_{\chi^c}$ so that $\xi = \xi' \circ c$. Since $c$ is an involution, we also have $\xi' = \xi \circ c$. In terms of the Frobenius conjugacy classes in $H$, $c$ maps $\Frob_v$ to $\Frob_{v^c}$.  So at a finite $v$ outside the support of $\ff_\chi \ff_{\chi^c}$, both $\chi$ and $\chi^c$ are unramified, and we have $(\chi^c)_v(\pi_v) = \chi_{v^c}(\pi_{v^c})$.

\subsection{Reduction of order}

In view of Corollary \ref{sufficientcond}, our problem becomes 

\noindent {\bf Question:} Classify finite order idele class characters $\chi$ of $K$ satisfying
\begin{eqnarray}\label{delta}
\chi^c = \chi \cdot \delta
\end{eqnarray}
for some quadratic idele class character $\delta$ of $K$. 

Recall the following well-known fact  (cf. \cite[Proposition 1]{GL}): 

\begin{prop}\label{chi=chic}A finite order idele class character $\mu$ of a quadratic field $K$ satisfies $\mu = \mu^c$ if and only if $\mu = \nu \circ N_{K/\Q}$ for an idele class character $\nu$ of $\Q$. Equivalently, a finite order character $\mu$ of the Galois group $G_K$ extends to a character $\nu$ of $G_{\Q}$ if and only if $\mu = \mu^c$. 
\end{prop}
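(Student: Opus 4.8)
The statement is the classical ``Hilbert 90 / inflation-restriction'' characterization of which characters of $G_K$ descend to $G_\Q$, and the plan is to prove it twice over: once on the automorphic (idele-theoretic) side and once on the Galois side, the two being interchangeable by class field theory as set up in \S3.2. I will give the idele-theoretic argument as the main one. The easy direction is immediate: if $\mu = \nu \circ N_{K/\Q}$ for an idele class character $\nu$ of $\Q$, then since the norm map $N_{K/\Q}\colon I_K \to I_\Q$ intertwines the Galois action of $c$ on $I_K$ with the trivial action on $I_\Q$ (equivalently $N_{K/\Q}(x^c) = N_{K/\Q}(x)$ for all $x \in I_K$), we get $\mu^c(x) = \nu(N_{K/\Q}(x^c)) = \nu(N_{K/\Q}(x)) = \mu(x)$, so $\mu = \mu^c$.

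For the converse, suppose $\mu = \mu^c$ as an idele class character of $K$. The key exact sequence is
\[
1 \longrightarrow I_\Q/\Q^\times \xrightarrow{\ \iota\ } I_K/K^\times \xrightarrow{\ x \mapsto x/x^c\ } (I_K/K^\times)^{N=1},
\]
where $\iota$ is induced by the inclusion $\Q \hookrightarrow K$ and $(I_K/K^\times)^{N=1}$ denotes the kernel of the norm; the surjectivity of $x \mapsto x/x^c$ onto the norm-one classes, together with exactness at the middle, is exactly Hilbert's Theorem 90 for the cyclic group $\Gal(K/\Q)$ applied to the idele class group, and it identifies $\iota(I_\Q/\Q^\times)$ with the kernel of ``$x \mapsto x/x^c$''. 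Now $\mu = \mu^c$ says precisely that $\mu$ is trivial on every element of the form $x/x^c$, hence (by the exactness just quoted) $\mu$ is trivial on the norm-one classes and so factors through the quotient $(I_K/K^\times)/(I_K/K^\times)^{N=1}$. That quotient is, via the norm map, isomorphic to a subgroup of finite index in $I_\Q/\Q^\times$; so $\mu$ descends to a character $\nu_0$ of an open finite-index subgroup of $I_\Q/\Q^\times$ with $\mu = \nu_0 \circ N_{K/\Q}$ on that subgroup. Finally I would extend $\nu_0$ to a character $\nu$ of all of $I_\Q/\Q^\times$ (possible because the circle group is divisible, hence injective as an abelian group, so characters extend from subgroups), and of finite order after adjusting by a character of the finite quotient; then $\mu$ and $\nu \circ N_{K/\Q}$ are two finite-order idele class characters of $K$ agreeing on a finite-index subgroup, and one checks they actually agree on the nose (again using that $N_{K/\Q}(I_K)$ is the relevant index-$2$, or finite-index, subgroup and that $\mu$ is already known to be a pullback along $N_{K/\Q}$ on it). The translation to the Galois-side statement is then purely formal: a character of $G_K$ extends to $G_\Q$ iff the corresponding idele class character is a norm pullback, by the functoriality of the Artin reciprocity map with respect to $N_{K/\Q}$ and restriction $G_\Q \twoheadrightarrow \Gal(K/\Q)$, as recorded in \S3.2.

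The main obstacle is bookkeeping at the boundary between ``agrees on a finite-index subgroup'' and ``agrees everywhere'': one must be careful that the extension $\nu$ of $\nu_0$ can be chosen of finite order and that no genuine ambiguity (a character of $\Gal(K/\Q)$, i.e.\ the quadratic character cutting out $K$) sneaks in. Concretely, two extensions of $\nu_0$ differ by a character of $I_\Q/(\Q^\times N_{K/\Q}I_K) \cong \Gal(K/\Q)$, which pulls back to the \emph{trivial} character of $G_K$ under $N_{K/\Q}$, so this ambiguity is harmless for the equivalence as stated; making that explicit is the one point that deserves care rather than being waved through. Everything else is the standard cohomology-of-idele-classes package (Hilbert 90 in degree one is the computation $\widehat H^{-1}(\Gal(K/\Q), C_K) = 0$), which I would cite rather than reprove, pointing to \cite{GL} as the excerpt already does.
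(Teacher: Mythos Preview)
Your argument is correct, but it takes a genuinely different route from the paper's. The paper does not give a self-contained proof at all: it simply observes that the global Artin map is the product of the local ones (citing \cite{Tate}) and then invokes local base change, i.e.\ the functoriality of the local reciprocity map under norm and restriction, as in \cite[\S2.4]{Serre2}, together with the reference to \cite{GL}. In other words, the paper reduces to a place-by-place statement and cites the local theory.

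You instead run a direct global argument via the cohomology of the idele class group: $\mu=\mu^c$ forces $\mu$ to kill every $x/x^c$, and the vanishing $\widehat H^{-1}(\Gal(K/\Q),C_K)=0$ identifies $\{x/x^c\}$ with the norm-one classes, so $\mu$ factors through $N_{K/\Q}(C_K)\subset C_\Q$ and can be extended. This is a clean Hilbert~90 argument and has the virtue of being essentially self-contained once one grants the standard class field theory input $H^1(G,C_K)=0$ and $(C_K)^G=C_\Q$. The paper's approach, by contrast, avoids the global cohomology entirely but trades it for a black-box appeal to the local theory.

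One small simplification: your concern about ``agrees on a finite-index subgroup versus agrees everywhere'' is unnecessary. Once you know $\mu=\nu_0\circ N_{K/\Q}$ with $\nu_0$ defined on $N_{K/\Q}(C_K)$, this equality already holds on \emph{all} of $C_K$, because $N_{K/\Q}$ lands in $N_{K/\Q}(C_K)$ by definition. Any extension $\nu$ of $\nu_0$ to $C_\Q$ then satisfies $\nu\circ N_{K/\Q}=\nu_0\circ N_{K/\Q}=\mu$ identically; the ambiguity by $\delta_{K/\Q}$ is killed by the norm and so is irrelevant, exactly as you note at the end. You can drop the intermediate hedging.
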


Such $\mu$ is called the {\it base change} of $\nu$ to $K$.  Since the global Artin reciprocity map $\psi_K$ is the product over places $v$ of $K$ of the local Artin reciprocity map $\psi_{K_v}$ (see, for example \cite[section 6]{Tate}), Proposition \ref{chi=chic} results from  local base change for $\mu_v$, which in turn follows from the functoriality of the local reciprocity map, as explained in \cite[section 2.4]{Serre2}.   Hence if $\chi$ satisfies (\ref{delta}), so does $\chi \cdot \nu \circ N_{K/\Q}$ for all idele class characters $\nu$ of $\Q$. Thus it suffices to study $\chi$ up to multiplication by finite order characters arising from base change. Note that (\ref{delta}) implies $\delta^c = \delta$ so that $\delta$ is from base change.

Clearly $\chi$ and $\chi^c$ have the same order, say, $r$. The condition (\ref{delta}) 
implies $2|r$. Write $r = 2^e r'$, where $e \ge 1$ and $r'$ is odd. Since $2^e$ and $r'$ are coprime, there are integers $a$ and $b$ such that $a2^e + br'=1$. We have $\chi = \chi^{a2^e}\cdot \chi^{br'}$ as a product of two idele class characters $\chi^{a2^e}$ and $\chi^{br'}$ of $K$ of order $r'$ and $2^e$, respectively. Squaring (\ref{delta}) yields 
\begin{eqnarray}\label{delta-squared} 
 (\chi^2)^c = (\chi^c)^2 = \chi^2,
\end{eqnarray} 
implying $(\chi^{a2^e})^c = \chi^{a2^e}$, which comes from base change by Proposition \ref{chi=chic}. This proves

\begin{prop}\label{reduction of order} Let $K/\Q$ be a quadratic extension. Up to multiplication by a character from base change, an idele class character $\chi$ of $K$ of finite order satisfying (\ref{delta}) has order a power of $2$. 
\end{prop}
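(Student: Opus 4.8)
The plan is to decompose $\chi$ according to the $2$-primary and odd parts of its order, and to observe that squaring (\ref{delta}) makes the odd part self-conjugate, hence base change by Proposition~\ref{chi=chic}; what survives is the $2$-primary part, which has order a power of $2$.

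First I would pin down the parity of the order. Let $r=\ord(\chi)$; since $\chi^c$ and $\chi$ have the same order, (\ref{delta}) reads $\chi^c\chi^{-1}=\delta$ with $\delta$ of order exactly $2$ (we may assume $\delta\neq 1$, as otherwise $\chi=\chi^c$ is already base change and the claim is vacuous). Raising to the $r$-th power gives $\delta^r=1$, so $2\mid r$. Write $r=2^e r'$ with $e\ge 1$ and $r'$ odd, and pick $a,b\in\Z$ with $a2^e+br'=1$, so that $\chi=\chi^{a2^e}\cdot\chi^{br'}$. A short order computation shows $\chi^{a2^e}$ has order exactly $r'$ and $\chi^{br'}$ has order exactly $2^e$: indeed $\gcd(a2^e,2^er')=2^e\gcd(a,r')=2^e$ because $a$ is a unit mod $r'$, and symmetrically $\gcd(br',2^er')=r'$ because $b$ is a unit mod $2^e$ and $r'$ is odd.

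Next I would exploit (\ref{delta}) squared. One has $(\chi^c)^2=(\chi\delta)^2=\chi^2\delta^2=\chi^2$, so every \emph{even} power of $\chi$ is self-conjugate; in particular $\chi^{a2^e}$ is self-conjugate since $e\ge 1$. By Proposition~\ref{chi=chic}, a self-conjugate finite-order idele class character of the quadratic field $K$ is the base change $\nu\circ N_{K/\Q}$ of an idele class character $\nu$ of $\Q$. Hence $\chi=(\nu\circ N_{K/\Q})\cdot\chi^{br'}$ with $\chi^{br'}$ of order $2^e$, which is the assertion. As a consistency check I would also note, using the remark preceding the Question (that (\ref{delta}) is preserved under multiplication by base-change characters), that $\chi^{br'}=\chi\cdot(\nu\circ N_{K/\Q})^{-1}$ still satisfies (\ref{delta}) with the same $\delta$, so passing to the $2$-primary part loses nothing relevant to the classification problem.

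I do not expect a genuine obstacle here: the argument is the coprime splitting of a finite cyclic group together with the single external input Proposition~\ref{chi=chic}. The only places needing a moment's care are verifying that $\chi^{a2^e}$ and $\chi^{br'}$ have the stated orders (and not proper divisors thereof), and justifying $\delta\neq 1$ so that $2\mid r$; both are immediate once the parity and coprimality bookkeeping is written out.
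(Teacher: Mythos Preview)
Your proof is correct and follows essentially the same approach as the paper: decompose $\chi=\chi^{a2^e}\cdot\chi^{br'}$ via B\'ezout, square (\ref{delta}) to see $\chi^2$ is self-conjugate so that $\chi^{a2^e}$ is self-conjugate, and apply Proposition~\ref{chi=chic}. Your version adds explicit verification of the orders of the two factors and a closing consistency check that $\chi^{br'}$ still satisfies (\ref{delta}), neither of which appears in the paper but both of which are welcome.
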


Therefore {\it we shall assume $\chi$ has order a power of $2$} and study when it satisfies (\ref{delta}). The condition (\ref{delta-squared}) says that its square comes from base change. 
 
\smallskip

If $\chi$ is quadratic and not equal to $\chi^c$, then they differ by a quadratic idele class character of $K$. This proves

\begin{thm}\label{quadratic chi} An idele class character $\chi$ of $K$ of order $2$ satisfies  $\chi^c = \chi \cdot \delta$ for some quadratic idele class character $\delta$ of $K$ if and only if $\chi$ does not arise from base change.
\end{thm}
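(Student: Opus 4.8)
The plan is to recognize Theorem~\ref{quadratic chi} as essentially a special case of the already-established equivalence between (a) and (b) in Theorem~\ref{arithequiv}, combined with the characterization of self-conjugate characters in Proposition~\ref{chi=chic}. So the proof splits into two implications, and the real content is almost entirely bookkeeping about what it means for a quadratic (order-$2$) character to satisfy \eqref{delta}.

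First I would prove the easy direction: if $\chi$ arises from base change, then $\chi = \chi^c$ by Proposition~\ref{chi=chic}, so there is no nontrivial quadratic $\delta$ with $\chi^c = \chi\cdot\delta$; the only possibility would be $\delta$ trivial, but then $\chi^c = \chi$ contradicts nothing — wait, one must be slightly careful here. The statement requires $\chi^c = \chi\cdot\delta$ for \emph{some} quadratic $\delta$, and a trivial $\delta$ technically is ``quadratic or trivial''; the intended reading (consistent with the irreducibility discussion in \S2) is that $\chi$ is not self-conjugate, equivalently $\delta$ is a genuine nontrivial quadratic character. So the clean way to phrase the forward direction is: $\chi^c = \chi\cdot\delta$ with $\delta$ nontrivial forces $\chi \ne \chi^c$, hence by Proposition~\ref{chi=chic} $\chi$ does not come from base change. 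I would state this explicitly to avoid the degenerate-$\delta$ ambiguity.

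For the converse, suppose $\chi$ has order $2$ and does not arise from base change. Then by Proposition~\ref{chi=chic}, $\chi \ne \chi^c$. Now $\chi^c$ also has order $2$ (conjugation is an automorphism of the group of characters), so the quotient $\delta := \chi^c \chi^{-1} = \chi^c\chi$ is a product of two order-$2$ characters, hence has order dividing $2$; since $\chi \ne \chi^c$ it is nontrivial, so it is genuinely quadratic. This gives $\chi^c = \chi\cdot\delta$ as required. That is the whole argument — it is the observation already flagged in the introduction that ``for $\chi$ quadratic, $\chi\ne\chi^c$ if and only if $\chi$ and $\chi^c$ differ by a quadratic character.''

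There is no serious obstacle here; the only thing to be careful about is the logical status of the trivial character as a ``quadratic character'' — I would handle this by reading \eqref{delta} as requiring $\chi$ non-self-conjugate (which \S3.3 has already set up, since the whole Question is about characters that are \emph{not} base change), so that $\delta$ is automatically nontrivial, and then both directions are immediate from Proposition~\ref{chi=chic} together with the fact that the group of order-$\le 2$ characters is closed under products. I would keep the proof to a few lines and cross-reference Proposition~\ref{chi=chic} and the remark following Corollary~\ref{sufficientcond}.
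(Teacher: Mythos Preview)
Your proposal is correct and matches the paper's own argument almost verbatim: the paper's entire proof is the one-line observation preceding the theorem, namely that if $\chi$ is quadratic and $\chi\ne\chi^c$ then $\delta:=\chi^c\chi^{-1}$ is a nontrivial character of order dividing $2$, together with Proposition~\ref{chi=chic} for the equivalence of ``not self-conjugate'' and ``not base change.'' Your additional care about whether $\delta$ is allowed to be trivial is reasonable but not strictly needed, and your initial framing via Theorem~\ref{arithequiv} is unnecessary---the direct argument you end up giving is exactly what the paper does.
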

 
Such $\chi$'s are not faithful on $H$. 

\subsection{Conductors of idele class characters with order a power of $2$}

Before going further, we explore restrictions on the conductor of an idele class character whose order is a power of $2$.

\begin{prop}\label{conductor} Let $K$ be a quadratic extension of $\Q$ with $\Gal(K/\Q) = \langle c \rangle$. Let $\chi$ be an idele class character of $K$ with conductor $\ff$ and order a power of $2$. Let $v$ be a finite place of $K$ such that $\ord_v \ff = m(v) \ge 1$. Suppose $\chi_v$ on $\mathcal U_v$ has order $2^e$. Denote by $\kappa_v$ the residue field at $v$, and by $\pi_v$ a uniformizer of $K_v$. 

(i) If $v$ is above an odd prime $p$, then $m(v)=1$ and $2^e$ divides $|\kappa_v| - 1$. Moreover, $\chi_v^c = \chi_v^p$ on $\mathcal U_v$ has conductor $v$ and the same order as $\chi_v$ if $p$ is inert in $K$; $\chi_v^c = \chi_v$ on $\mathcal U_v$ if $p$ ramifies in $K$; and $\chi_v^c$ is a character on $K_{v^c}^\times$ of conductor $v^c$ and same order as $\chi_v$ if $p$ splits in $K$. 

(ii) If $v$ is above $2$, then $m(v) \ge 2$. Further, more can be said according to the behavior of $2$:

(iia) $2$ splits in $K$. If $m(v) = 2$, then $\mathcal U_v/(1 + \mathcal M_v^{m(v)}) = \langle  -1 \rangle$ and $e=1$. If $m(v) \ge 3$, then 
\[
\mathcal U_v/(1 + \mathcal M_v^{m(v)}) = \langle  -1 \rangle \times \langle  1 + \pi_v^2 \rangle \cong \Z/2\Z \times \Z/2^{m(v)-2}\Z
\]
and $e = m(v)-2$.
 Moreover $\chi_v^c$ is a character of $K_{v^c}^\times$ with the same order as $\chi_v$. 
 
(iib) $2$ is inert in $K$. Choose $\pi_v = 2$. Then $\mathcal U_v = \langle \mu_3 \rangle (1 + \mathcal M_v)$ for a primitive cubic root of unity $\mu_3$ and $\chi_v$ on $\mathcal U_v$ is the trivial extension of $\chi_v$ on $V_v:= (1 + \mathcal M_v)/(1 + \mathcal M_v^{m(v)})$. For $m(v) = 2$, 
\[
V_v = \langle  -1 \rangle\times\langle1+\mu_3 2 \rangle \cong (\Z/2\Z)\times (\Z/2\Z)
\]
 and $e=1$. For $m(v) \ge 3$, 
 \[
 V_v = \langle  -1 \rangle \times \langle 1 + \mu_3 2\rangle \times \langle 1 - \mu_3 2^2\rangle \cong (\Z/2\Z) \times (\Z/2^{m(v)-1}\Z) \times (\Z/2^{m(v)-2}\Z).
 \]
  We have $e = m(v)-1$ if $\chi_v(1 + \mu_3 2)$ has order $2^{m(v)-1}$; otherwise $e = m(v)-2$ and $\chi_v(1 - \mu_3 2^2)$ has order $2^{m(v) - 2}$. Further, $(1 + \mu_3 2)^c = -(1 + \mu_3 2)$ and $(1 - \mu_3 2^2)^c = 1 - \mu_3^2 2^2$. 

(iic) $2$ ramifies in $K$. Then $\mathcal U_v = 1 + \mathcal M_v$ and $V_v:=\mathcal U_v/(1 + \mathcal M_v^{m(v)})$ has order $2^{m(v)-1}$. We have $V_v = \langle1 + \pi_v \rangle$ cyclic for $m(v) = 2$ or $3$; 
\[
V_v = \langle1 + \pi_v \rangle \times \langle 1 + \pi_v^3 \rangle \cong (\Z/4\Z)\times (\Z/2\Z)
\]
 for $m(v) = 4$; and 
 \[
 V_v = \langle1 + \pi_v \rangle \times \langle 1 + \pi_v^3 \rangle \times \langle1 + \pi_v^4 \rangle \cong (\Z/4\Z)\times (\Z/2\Z)\times (\Z/2\Z)
 \]
  for $m(v) = 5$. Moreover, if $e = 1$, then either $m(v) = 2$ with $\chi_v(1 + \pi_v) = -1$, or $m(v) = 4$ with $\chi_v(1 + \pi_v^3) = -1$, or $m(v) = 5$ with $\chi_v(1 + \pi_v^4) = -1$. If $e \ge 2$, we have $m(v) \le 2e+3$. 
  
\end{prop}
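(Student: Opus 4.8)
The plan is to treat Proposition~\ref{conductor} as a place-by-place computation of the unit filtration of the local field $K_v$, translating the statement that $\chi_v$ has order a power of $2$ into constraints on the conductor exponent $m(v)$. First I would recall that $\chi_v$ trivial on $1+\mathcal M_v^{m(v)}$ but not on $1+\mathcal M_v^{m(v)-1}$ means that $\chi_v$ induces a \emph{faithful} character on the quotient $\mathcal U_v/(1+\mathcal M_v^{m(v)})$ of some cyclic subquotient, so I need the group structure of the principal unit filtration $(1+\mathcal M_v^i)/(1+\mathcal M_v^{m(v)})$ together with the Teichm\"uller part. The key structural input is the standard fact that for a local field with residue characteristic $p$, the successive quotients $(1+\mathcal M_v^i)/(1+\mathcal M_v^{i+1})$ are isomorphic to the additive group of $\kappa_v$, that the map $x\mapsto x^p$ shifts the filtration level up by $e_v$ (the absolute ramification index over $\Q_p$) once $i$ is large enough, and that below that threshold the $p$-th power map behaves differently depending on whether $p\nmid i$ or $p\mid i$. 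Concretely, for $p$ odd the group $1+\mathcal M_v$ is already pro-$p$ and torsion-free modulo the prime-to-$p$ part, which forces $m(v)=1$ since a character of $2$-power order must be trivial on a pro-$p$ group with $p$ odd; the divisibility $2^e\mid |\kappa_v|-1$ comes from $\chi_v$ then factoring through $\mathcal U_v/(1+\mathcal M_v)\cong\kappa_v^\times$.

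For part (i), after establishing $m(v)=1$, the behavior of $\chi_v^c$ follows from the description in \S3.2 of how $c$ acts: inert means $c$ induces the Frobenius $x\mapsto x^p$ on the residue field so $\chi_v^c=\chi_v^p$ on $\mathcal U_v$, and since $\gcd(p,|\kappa_v|-1)$-arguments (with $p$ odd) show raising to the $p$-th power is an automorphism of the cyclic $2$-group $\chi_v(\mathcal U_v)$, the order is preserved; the split and ramified cases are immediate from $c$ swapping $v\leftrightarrow v^c$ or fixing $v$ pointwise. For part (ii), the main work is the explicit structure of $\mathcal U_v$ and its filtration when $v\mid 2$. In the split case $K_v=\Q_2$, so I would just quote the classical description $\Z_2^\times\cong\{\pm1\}\times(1+4\Z_2)$ with $1+4\Z_2\cong\Z_2$ topologically generated by $1+\pi_v^2$ and compute the truncations, reading off $e=m(v)-2$ and $m(v)\ge 2$ from the fact that $\chi_v$ nontrivial on $\{\pm1\}$ already needs conductor exponent $2$. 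In the inert case $K_v=\Q_2(\mu_3)$ is the unramified quadratic extension, so $\mathcal U_v=\langle\mu_3\rangle\times(1+\mathcal M_v)$ and I need the $\Z_2$-module structure of $1+\mathcal M_v$; this requires identifying two independent topological generators (the computations single out $1+\mu_3 2$ and $1-\mu_3 2^2$, with orders determined by the filtration jumps) plus the torsion $-1$, and then the conjugation formulas $(1+\mu_3 2)^c=-(1+\mu_3 2)$ and $(1-\mu_3 2^2)^c=1-\mu_3^2 2^2$ follow from $c$ acting as $\mu_3\mapsto\mu_3^2$. The ramified case $v\mid 2$ with $\pi_v$ a uniformizer of a ramified quadratic extension of $\Q_2$ is the most delicate: here $\mathcal U_v=1+\mathcal M_v$ is already pro-$2$, the $2$-power map sends $1+\mathcal M_v^i$ into $1+\mathcal M_v^{i+2}$ only for $i\ge 3$ (since $e_v=2$) and one must track carefully the low-level jumps to get the isomorphism types listed for $m(v)=2,3,4,5$ and the inequality $m(v)\le 2e+3$.

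The main obstacle I expect is the ramified case (iic): pinning down the exact structure of the truncated principal unit group and which specific generators ($1+\pi_v$, $1+\pi_v^3$, $1+\pi_v^4$) carry which orders is sensitive to the precise ramified quadratic extension of $\Q_2$ under consideration, and the bound $m(v)\le 2e+3$ must be extracted from the observation that once a principal unit $1+x$ with $\ord_v x$ large is raised to a $2$-power it descends the filtration by $2$ each time, so a character of order $2^e$ can be nontrivial only down to level roughly $2e+$ (a bounded correction coming from the wild ramification). I would organize the argument by first handling the ``large $i$'' regime uniformly via the logarithm/$p$-power shift, then doing the finitely many small-level cases by hand, and finally deriving the stated isomorphisms and conjugation identities; the split and inert $v\mid 2$ cases and all of part (i) should be routine given the standard local structure theory, so the genuine effort is concentrated in (iic) and in correctly bookkeeping the action of $c$ throughout.
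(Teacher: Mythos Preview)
Your proposal is correct and follows essentially the same route as the paper's proof: both reduce to a place-by-place analysis of the unit filtration, use that a $2$-power-order character must kill the pro-$p$ part of $1+\mathcal M_v$ for odd $p$ to force $m(v)=1$ in (i), quote the standard structure of $\Z_2^\times$ for (iia), compute the filtration of $1+\mathcal M_v$ in the unramified quadratic extension $\Q_2(\mu_3)$ with the explicit generators $-1,\,1+\mu_3 2,\,1-\mu_3 2^2$ for (iib), and handle (iic) by working out the low-level truncations by hand together with the observation $(1+\mathcal M_v^s)^2\subset 1+\mathcal M_v^{s+2}$ for $s\ge 3$ to obtain $m(v)\le 2e+3$. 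Your anticipation that the exact structure in (iic) for larger $m(v)$ depends on which ramified quadratic extension of $\Q_2$ one has is also explicitly noted in the paper.
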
 

\begin{proof} 
Since $\ord_v \ff = m(v)$, $\chi_v$ on $\mathcal U_v/1 + \mathcal M_v^{m(v)}$ is primitive. 

(i) Assume $v$ is above an odd prime $p$. If $m(v)>1$, $\mathcal U_v/(1 + \mathcal M_v^{m(v)})$ contains the  subgroup $(1 + \mathcal M_v)/(1 + \mathcal M_v^{m(v)})$ whose order is a power of $p$. Since $p$ is coprime to $2^e$, the order of $\chi_v$ on $\mathcal U_v$, so $\chi_v$ is trivial on $(1 + \mathcal M_v)/(1 + \mathcal M_v^{m(v)})$, which implies that $\chi_v$ comes from a character of $\mathcal U_v/(1+\mathcal M_v)$, hence is not primitive on $\mathcal U_v/(1 + \mathcal M_v^{m(v)})$, a contradiction. This proves $m(v)=1$. Note that $\mathcal U_v/(1+\mathcal M_v) \cong \kappa_v^\times$, therefore the order of $\chi_v$ on $\mathcal U_v$, which is $2^e$,  divides $|\kappa_v|-1$. 

If $p$ is inert in $K$, then $\kappa_v$ is a quadratic extension of $\Z/p\Z$ and $c$ induces the Frobenius automorphism on $\kappa_v$. So $\chi_v^c = \chi_v^p$ on $\mathcal U_v$. If $p$ ramifies in $K$, then $\kappa_v \cong \Z/p\Z$ on which $c$ acts trivially, and $\chi_v^c = \chi_v$ on $\mathcal U_v$. If $p$ splits in $K$, then $v$ and $v^c$ are the two places of $K$ above $p$, and $c$ gives rise to the isomorphism $K_v \cong K_{v^c} (\cong \Q_p)$. Thus $\chi_v^c = \chi_v \circ c$ is a character on $K_{v^c}$ with conductor $v^c$ and the same order as $\chi_v$.

(ii) Assume $v$ is above $2$. Let $\pi_v$ be a uniformizer of $K_v$. We distinguish three cases. 

(iia) $2$ splits in $K$. Then $K_v \cong \Q_2$, $\mathcal U_v = 1 + \mathcal M_v$, and $\mathcal U_v/(1 + \mathcal M_v^{m}) \cong (\Z/2^{m}\Z)^\times$, which is $\langle  -1 \rangle$ for $m=2$, and $\langle -1 \rangle \times \langle 1 + \pi_v^2 \rangle$ of order $2^{m-1}$ if $m \ge 3$. So $1 + \mathcal M_v = \langle  -1 \rangle \times (1 + \mathcal M_v^2)$. As $\chi_v$ on $\mathcal U_v$ has order $2^e$, so $\chi_v$ is trivial on $(1 + \mathcal M_v^2)^{2^e}= 1 + \mathcal M_v^{e+2}$.  
Thus  if $m(v) = 2$, then $e=1$ and $\chi_v(-1) = -1$; if $m(v) \ge 3$, then $e = m(v) - 2$ and $\chi_v(1 + \pi_v^2)$ is a primitive $2^e$th root of $1$, while $\chi_v(-1) = \pm 1$. The involution $c$ maps $v$ to $v^c$, we choose $\pi_{v^c}=(\pi_v)^c$, and $\chi_v^c$ is a character on $K_{v^c}^\times$ with conductor equal to $v_c^{m(v)}$ such that $\chi_v^c(-1) = \chi_v(-1)$ and $\chi_v^c(1 + \pi_{v^c}^2) = \chi_v(1 + \pi_v^2)$. 

(iib) $2$ is inert in $K$. Then $K_v$ is a quadratic unramified extension of $\Q_2$ with $2$ as a uniformizer. It contains a primitive cubic root of unity $\mu_3$ such that $\langle \mu_3 \rangle$ represents $\kappa_v^\times$. The involution $c$ induces the Frobenius automorphism on $K_v$, which acts on $\langle \mu_3 \rangle$ by squaring. As $\chi_v$ has order a power $2$, it is trivial on $\langle \mu_3 \rangle$. From $\mathcal U_v = \langle \mu_3 \rangle(1 + \mathcal M_v)$ we deduce that $\chi_v$ on $\mathcal U_v/(1 + \mathcal M_v^{m(v)})$ is a trivial extension of $\chi_v$ on $(1 + \mathcal M_v)/(1 + \mathcal M_v^{m(v)})$, a group of order $4^{m(v)-1}$. 

The group $(1 + \mathcal M_v)/(1 + \mathcal M_v^{2})= \langle -1 \rangle \times \langle 1 + \mu_32 \rangle \cong (\Z/2\Z) \times (\Z/2\Z)$, and  
$(1 + \mathcal M_v)/(1 + \mathcal M_v^{3}) = \langle -1 \rangle \times \langle 1 + \mu_32 \rangle \times\langle 1 - \mu_32^2 \rangle \cong (\Z/2\Z) \times (\Z/4\Z) \times (\Z/2\Z)$ since $(1 + \mu_32)^2 = 1 - 2^2$. One checks inductively on $m\ge 3$ that $(1 + \mathcal M_v)/(1 + \mathcal M_v^{m}) = \langle -1 \rangle \times \langle 1 + \mu_32 \rangle \times\langle 1 - \mu_32^2 \rangle \cong (\Z/2\Z) \times (\Z/2^{m-1}\Z) \times (\Z/2^{m-2}\Z)$. By assumption $\chi_v$ on $\mathcal U_v/(1 + \mathcal M_v^{m(v)})$ has order $2^e$. Then $e=1$ for $m(v)=2$. When $m(v) \ge 3$, either  $\chi_v(1 + \mu_32)$ has order $2^{m(v)-1}$ so that $e= m(v)-1$ or $\chi_v(1 + \mu_32)$ has order $\le 2^{m(v)-2}$ and $\chi_v(1 - \mu_32^2)$ has order $2^{m(v)-2}$ so that $e=m(v)-2$. Note that $(1 + \mu_3 2)^c = 1 + \mu_3^2 2= - (1 + \mu_3 2)$ and $(1 - \mu_32^2)^c = 1 - \mu_3^2 2^2$.  

(iic) If $2$ ramifies in $K$, then $K_v$ is a totally ramified quadratic extension of $\Q_2$ with residue field $\kappa_v \cong \Z/2\Z$, group of units $\mathcal U_v = 1+ \mathcal M_v$ and $2 = u \pi_v^2$ for a unit $u \in \mathcal U_v$. Hence $\mathcal U_v/(1 + \mathcal M_v^{m})$ has order $2^{m-1}$. Note that $(1 + \mathcal M_v^2)^2 = 1 + \mathcal M_v^5$ and $(1+ \pi_v)^4 \in 1 + \mathcal M_v^5$. We find $\mathcal U_v/(1 + \mathcal M_v^{m})$ is $\langle1 + \pi_v \rangle \cong \Z/2^{m-1}\Z$ if $m \le 3$,   
$\langle 1 + \pi_v \rangle \times \langle 1 + \pi_v^3 \rangle \cong (\Z/4\Z) \times (\Z/2\Z)$ if $m = 4$, and 
$\langle 1 + \pi_v \rangle \times \langle 1 + \pi_v^3 \rangle \times \langle1 + \pi_v^4 \rangle \cong (\Z/4\Z) \times (\Z/2\Z) \times (\Z/2\Z)$ if $m = 5$. Further, if $\chi_v$ on $\mathcal U_v$ has order $2$, then there are three possible values for $m(v)$: $m(v) = 2$ with $\chi_v(1 + \pi_v) = -1$ (in which case $\chi_v(-1) = 1$), $m(v) = 4$ with $\chi_v(1+\pi_v^3) = -1$ and $\chi_v(1 + \pi_v) = \pm 1$, and $m(v) = 5$ 
with $\chi_v(1+\pi_v^4) = -1$, $\chi_v(1+\pi_v^3) = \pm 1$ and $\chi_v(1 + \pi_v) = \pm 1$. 

The structure of $\mathcal U_v/(1 + \mathcal M_v^{m})$ for $m \ge 6$ will depend on the field $K$. For example, if the minimal polynomial of $\pi_v$ over $\Q_2$ is $x^2 + 2x + 2$, then $1 + \pi_v$ has order $4$, while if the minimal polynomial is $x^2-2$, then $1 + \pi_v$ has infinite order in $\mathcal U_v$. Since $(1 + \mathcal M_v^s)^2 = 1 + \mathcal M_v^{s+2}$ for $s \ge 3$, if $\chi_v$ on $\mathcal U_v$ has order $2^e$ with $e \ge 2$, we get an upper bound $m(v) \le 2e + 3$. The involution $c$ on $K$ gives the nontrivial automorphism on $K_v$ over $\Q_2$ by sending $\pi_v$ to the other root of its minimal polynomial. 
\end{proof} 

\section{Quadratic idele class characters of quadratic fields up to base change}

In this section we classify quadratic idele class characters of a quadratic field $K$ up to base change and determine the conductors of such characters. We distinguish the discussion according to $K$ being imaginary or real.

\subsection{Classification of quadratic idele class characters over imaginary quadratic fields up to base change}
First consider the case that $K$ is an imaginary quadratic extension of $\Q$ with $\Gal(K/\Q) = \langle c \rangle$. Let $\chi$ be a quadratic idele class character of $K$ with conductor $\ff_\chi$. Then for each $v$ dividing $\ff_\chi$, $\chi_v$ is nontrivial on the group of units $\mathcal U_v$, hence it has order $2$ and thus is trivial on the squares in $\mathcal U_v/(1 + \mathcal M_v^{m(v)})$, where $m(v) = \ord_v \ff_\chi$. Moreover, if $v$ is above an odd prime $p$, then by 
Proposition \ref{conductor} (i), $m(v) = 1$ and $\chi_v$ on $\mathcal U_v/(1 + \mathcal M_v) \cong \kappa_v^\times$ is the unique quadratic character sending the squares in $\kappa_v^\times$ to $1$. If $v$ is above $2$, Proposition \ref{conductor} (ii) says that $m(v) =2$ or $3$ if $2$ is not ramified in $K$, and  $m(v) = 2, 4$, or $5$ otherwise. Further, among these values of $m(v)$, only when $m(v) = 2$ and $2$ is not inert in $K$, the group $\mathcal U_v/(1 + \mathcal M_v^{m(v)})$ is cyclic so that a quadratic character is unique. 

 We proceed to classify such $\chi$ up to multiplication by characters from base change.

\begin{prop}\label{reducing conductor-Kimaginary} 
Let $K= \Q(\sqrt {-d})$ be imaginary quadratic, where $d>0$ is squarefree, and denote  $\Gal(K/\Q)=\langle c \rangle$. Assume the class number of $K$ is odd and  the unit group is $U_K = \{\pm 1\}$ or $U_K = \langle \mu_6 \rangle$, where $\mu_n$ denotes a primitive $n$th root of unity. Then the following hold.

(1) $K= \Q(\sqrt {-d})$ satisfies the assumptions if and only if $d = 2$ or $d$ is a prime $\equiv 3 \mod 4$. Therefore $K/\Q$ is ramified at only one prime. 

(2) Let $v$ be the place of $K$ above an inert odd prime $p$. Then there is a unique quadratic idele class character $\eta$ of $K$ with conductor $\ff_\eta = v$. 


(3) Let $v$ and $v^c$ be the two places of $K$ above a prime $p$ split in $K$. Then there is a unique quadratic idele class character $\eta$ of $K$ with conductor $\ff_\eta$ equal to $vv^c$ for $p$ odd, and $(v v^c)^2$ for $p=2$. Further, for $p=2$, there are two quadratic idele class characters $\eta$ of $K$ with conductor $\ff_\eta = (v v^c)^3$, determined by $\eta_v$ on $\mathcal U_v$. 

(4) Let $v$ be the place of $K$ above $2$ which is either inert or ramified in $K$. Then there is a unique quadratic idele class character $\eta$ of $K$ with conductor $\ff_\eta = v^2$. If $2$ is inert, then there are two quadratic idele class characters $\eta$ of $K$ with conductor  $\ff_\eta = v^3$. 

(5) Let $v$ be the place of $K = \Q(\sqrt {-2})$ above $2$. 
Then there are no quadratic idele class characters $\eta$ of $K$ with conductor  $\ff_\eta = v^4$, and there are two quadratic idele class characters $\eta$ of $K$ with conductor  $\ff_\eta = v^5$, determined by $\eta_v(1 + \pi_v) = \pm 1$, $\eta_v(1+ \pi_v^3) = 1$ and $\eta_v(1 + \pi_v^4) = -1$, where $\pi_v = \sqrt {-2}$. These two characters differ by the quadratic idele class character of $K$ of conductor $v^2$ described in (4).

(6) Let $K = \Q(\sqrt {-d})$ for a prime $d \equiv 3 \mod 4$ and $v$ be the place of $K$ above $d$. Then there are no quadratic idele class characters $\eta$ of $K$ ramified exactly at $v$.

Moreover, the characters $\eta$ in (2)-(4) are from base change, but not (5).
\end{prop}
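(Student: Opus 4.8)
The plan is to reduce every assertion to a count of local characters. Since $K$ is imaginary quadratic, hence CM, Proposition~\ref{constraint-on-xi}(ii) forces $\chi_\infty$ to be trivial and $\xi(\chi)(U_K)=1$ for any idele class character $\chi$; conversely, as the class number is odd and therefore coprime to $2$, Corollary~\ref{quadratic}(a) shows that every primitive quadratic character $\xi$ of $(\Z_K/\fa)^\times$ with $\xi(U_K)=1$ lifts to a \emph{unique} quadratic idele class character of conductor exactly $\fa$. So for each prescribed conductor $\fa$ the task is to count primitive quadratic characters of $(\Z_K/\fa)^\times=\prod_v \mathcal U_v/(1+\mathcal M_v^{m(v)})$ that kill the image of $U_K$, and the local groups are read off directly from Proposition~\ref{conductor}. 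I note once and for all that, since $U_K=\{\pm1\}$ or $\langle\mu_6\rangle$ and $\mu_6^3=-1$, a quadratic $\xi$ has $\xi(\mu_6)=\xi(\mu_6)^3=\xi(-1)$; hence in every case $\xi(U_K)=1$ is equivalent to the single condition $\xi(-1)=1$.

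Part (1) is field-theoretic: by Gauss's genus theory the $2$-rank of the class group of an imaginary quadratic field is one less than the number of ramified primes, so odd class number forces exactly one ramified prime; together with the fact that $U_K=\{\pm1\}$ unless $K=\Q(\sqrt{-1})$ or $\Q(\sqrt{-3})$, the two hypotheses are equivalent to the discriminant being (up to sign) a prime power, i.e.\ $d=2$ or $d\equiv3\bmod4$ prime, and then only one prime ramifies. For parts (2)--(4) I would run the reduction case by case. At an odd prime, Proposition~\ref{conductor}(i) gives $m(v)=1$ and a cyclic group $\kappa_v^\times$, whose unique quadratic character is primitive and has $\xi(-1)=1$ (because $-1$ is a square in $\FF_{p^2}^\times$ in the inert case, and $\xi_v(-1)\xi_{v^c}(-1)=\leg{-1}{p}^2=1$ in the split case), yielding exactly one $\eta$ for conductors $v$ and $vv^c$. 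At the even places I would read the groups $\mathcal U_v/(1+\mathcal M_v^{m})$ from Proposition~\ref{conductor}(iia)--(iic), keep the characters that are primitive at the stated level, and impose $\xi(-1)=1$; expressing $-1$ through the displayed generators produces the asserted counts --- one for $(vv^c)^2$ and $v^2$, and two for $(vv^c)^3$ and for the inert $v^3$ (where the $U_K$-condition reads $\eta_v(-1)=\eta_{v^c}(-1)$, respectively leaves one generator value free).

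Parts (5)--(6) are the boundary cases where the count drops or is pinned. For (6), $v\mid d$ with $d$ odd forces $m(v)=1$ and leaves only the Legendre character of $\kappa_v^\times\cong\FF_d^\times$, which has $\xi(-1)=\leg{-1}{d}=-1$ because $d\equiv3\bmod4$; this violates $\xi(-1)=1$, so no such $\eta$ exists. For (5), with $K=\Q(\sqrt{-2})$ and $\pi_v=\sqrt{-2}$, I compute $(1+\pi_v)^2=-(1+\pi_v^3)$, whence $-1=(1+\pi_v)^2(1+\pi_v^3)$ and $\xi(-1)=\xi(1+\pi_v^3)$ in $\mathcal U_v/(1+\mathcal M_v^m)$. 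At level $m=4$ primitivity forces $\xi(1+\pi_v^3)=-1$, so $\xi(-1)=-1$ and conductor $v^4$ is impossible; at level $m=5$ primitivity forces $\xi(1+\pi_v^4)=-1$, the condition $\xi(-1)=\xi(1+\pi_v^3)=1$ is imposed, and $\xi(1+\pi_v)=\pm1$ remains free, giving exactly the two characters in the statement, which visibly differ by the conductor-$v^2$ character of (4).

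For the final dichotomy I would use Proposition~\ref{chi=chic}: $\eta$ is a base change precisely when $\eta^c=\eta$. In (2)--(4) the conductor is $c$-stable, and either $\eta$ is the unique character of that conductor (forcing $\eta^c=\eta$) or, in the two-element families, $c$-invariance follows from the conjugation formulas of Proposition~\ref{conductor} once $\eta_v(-1)=1$ is used; for instance $(1+\mu_32)^c=-(1+\mu_32)$ gives $\eta_v^c(1+\mu_32)=\eta_v(-1)\eta_v(1+\mu_32)=\eta_v(1+\mu_32)$. The genuinely hard step, and the reason the proposition matters, is showing the two level-$5$ characters of (5) are \emph{not} base change. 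Here $c$ sends $\pi_v\mapsto-\pi_v$, so $\eta^c(1+\pi_v)=\eta(1-\pi_v)=\eta(3)\,\eta(1+\pi_v)$ because $(1+\pi_v)(1-\pi_v)=3$; and from $3\equiv(1+\pi_v)^2(1+\pi_v^3)(1+\pi_v^4)\pmod{1+\mathcal M_v^5}$ together with $\eta(1+\pi_v^3)=1$ one gets $\eta(3)=\eta(1+\pi_v^4)=-1$, so $\eta^c(1+\pi_v)=-\eta(1+\pi_v)$ and $\eta^c\ne\eta$. By Theorem~\ref{quadratic chi} these $\eta$ then satisfy $\eta^c=\eta\cdot\delta$ with $\delta$ a nontrivial quadratic character, which is exactly what feeds Corollary~\ref{sufficientcond}. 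I expect the main obstacle to be precisely this last computation, i.e.\ controlling the multiplicative structure of $\mathcal U_v/(1+\mathcal M_v^5)$ for the ramified prime $2$ finely enough to detect the sign $\eta(3)=-1$ and thereby separate $\eta$ from $\eta^c$.
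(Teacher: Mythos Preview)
Your proposal is correct and follows essentially the same approach as the paper: reduce to counting primitive quadratic characters of $(\Z_K/\fa)^\times$ satisfying $\xi(-1)=1$ via Corollary~\ref{quadratic}, read off the local unit groups from Proposition~\ref{conductor}, and verify base change by checking $\eta^c=\eta$ on generators. Your computation in (5) showing $\eta^c\ne\eta$ via $\eta(3)=-1$ is a slight variant of the paper's, which instead factors $1-\pi_v=1+\pi_v+\pi_v^3\equiv(1+\pi_v)(1+\pi_v^3)(1+\pi_v^4)\bmod(1+\mathcal M_v^5)$ directly; both routes yield the same sign. One small imprecision: your base-change clause ``once $\eta_v(-1)=1$ is used'' covers the inert $v^3$ family but not the split $(vv^c)^3$ family, where $\eta_v(-1)$ may be $-1$; there the correct reason is that the constraint $\eta_v(-1)=\eta_{v^c}(-1)$ together with primitivity forces $\eta_v=\eta_{v^c}$ as characters of $\Z_2^\times$, so $\eta^c=\eta$ automatically.
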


\begin{proof} For a finite place $v$ of $K$, denote by $\kappa_v$ the residue field of $K_v$.

(1) It follows from Gauss's genus theory (cf. \cite[Chapter 8, Section 3, Theorem 4]{Cohn}) that an imaginary quadratic $K = \Q(\sqrt {-d})$ has odd class number if and only if its discriminant is divisible by only one prime, thus $d = 1, 2$ or a prime $\equiv 3 \mod 4$. The case $d = 1$ is  ruled out because its group of units $\langle \mu_4 \rangle$ is not listed. 

Since we are only concerned with quadratic characters $\eta$ in the remaining statements, when $K = \Q(\sqrt {-3})=\Q(\mu_6)$, it is automatic that $\eta(\mu_3) = 1$. Thus $\eta(U_K) = 1$ if and only if $\eta(-1) = 1$. Hence the argument below for $U_K = \{\pm 1\}$, i.e., $d>3$, also applies to the case $U_K = \langle  \mu_6 \rangle$, i.e., $d=3$.  

(2) If $v$ is above an odd inert prime $p$, then $\kappa_v^\times$ is cyclic of order $p^2-1$ so that $-1$ is a square in $\kappa_v^\times$. Thus the primitive quadratic character $\xi$ on $(\Z_K/v)^\times \cong \kappa_v^\times$ is trivial on $U_K$. By Corollary \ref{quadratic} there is a unique quadratic idele class character $\eta$ of $K$ with conductor $\ff_\eta = v$ such that $\eta^\infty$ lifts $\xi$. As $\xi$ is the unique quadratic character of $\kappa_v^\times$, $\eta$ is the unique quadratic idele class character of $K$ with conductor $v$. Since $v$ is the only place of $K$ above such $p$, $\eta^c$ has the same conductor and the same order as $\eta$, hence $\eta^c = \eta$ arises from base change.

(3) Suppose $v$ and $v^c$ are the two places above a split prime $p$. If $p$ is odd, then $\kappa_v^\times$ and $ \kappa_{v^c}^\times$ are cyclic of order $p-1$. Let $\xi$, $\xi^c$ be the unique quadratic character of $\kappa_v^\times$, $\kappa_{v^c}^\times$, respectively. We have $\xi(-1)= \xi^c(-1) = \pm 1$. Then $\xi$ and $\xi^c$ determine a unique primitive character $\tilde \xi$ on $(\Z_K/v v^c)^\times \cong (\Z_K/v)^\times \times (\Z_K/v^c)^\times$ by Chinese remainder theorem. Note that $\tilde \xi(-1) = \xi(-1)\xi^c(-1)=1$. 
By Corollary \ref{quadratic} there is a unique quadratic idele class character $\eta$ of $K$ with conductor $\ff_\eta = v v^c$ such that $\eta^\infty$ lifts $\tilde \xi$ on $(\Z_K/v v^c)^\times$. Clearly $\eta^c = \eta$ is from base change.

If $p=2$, then by Proposition \ref{conductor} (iia), $\mathcal U_v/(1 + \mathcal M_v^2) = \langle  -1 \rangle$ is cyclic of order $2$, the same argument as above shows the existence of a unique quadratic idele class character $\eta$ of $K$ with conductor $\ff_\eta = (v v^c)^2$. Further, by Proposition \ref{conductor} (iia), $\mathcal U_v/(1 + \mathcal M_v^3) = \langle  -1 \rangle \times \langle 1 + \pi_v^2 \rangle$ is a Klein $4$-group. Let $\eta_v$ be a quadratic character on $\mathcal U_v$ with  conductor $v^3$. Then $\eta_v(1 + \pi_v^2) = -1$ and $\eta_v(-1) = \pm 1$ so there are two possibilities. Notice that if $\eta_v$ extends to a quadratic idele class character $\eta$ of $K$ with conductor $(v v^c)^3$, then the extension is unique. This is because $\eta_{v^c}$ is a quadratic character on $\mathcal U_{v^c}$ with conductor $(v^c)^3$. As such, $\eta_{v^c}(1 + \pi_{v^c}^2) = -1$. Moreover, $\eta_{v^c}(-1)$ must equal $\eta_v(-1)$ in order that $\eta$ is trivial at $-1 \in K^\times$. It remains to prove the existence of an extension $\eta$.

Each $\eta_v$ lifts a primitive character $\xi$ on $(\Z_K/v^3)^\times \cong \mathcal U_v/(1 + \mathcal M_v^3)$. The Galois conjugate $\xi^c$ is a primitive character on $(\Z_K/(v^c)^3)^\times$ satisfying $\xi^c(-1) = \xi(-1)$. By Chinese remainder theorem, $\xi$ and $\xi^c$ determine a unique primitive character $\tilde \xi$ on $(\Z_K/(v v^c)^3)^\times \cong (\Z_K/v^3)^\times \times (\Z_K/(v^c)^3)^\times$. Note that $\tilde \xi(-1) = \xi(-1)\xi^c(-1) = 1$, hence $\tilde \xi(U_K) = 1$.  We conclude from  Corollary \ref{quadratic} the existence of a quadratic idele class character $\eta$ of $K$ extending the given $\eta_v$ on $\mathcal U_v$ with conductor $\ff_\eta = (v v^c)^3$. It follows from the construction that $\eta^c = \eta$ so that $\eta$ is from base change. 

(4) If $v$ is above $2$ which ramifies in $K$, then $\mathcal U_v/(1 + \mathcal M_v^2) = \langle1 + \pi_v \rangle$ is cyclic of order $2$ by Proposition \ref{conductor} (iic). There is a unique quadratic character $\xi$ on $(\Z_K/v^2)^\times \cong \mathcal U_v/(1 + \mathcal M_v^2)$. Hence $\xi^c = \xi$. Moreover $\xi(-1) = 1$ since $-1 \in 1 + \mathcal M_v^2$. By Corollary \ref{quadratic} there is a unique quadratic idele class character $\eta$ of $K$ with conductor $\ff_\eta = v^2$ and $\eta^c = \eta$ is from base change.

Next assume $2$ is inert in $K$. A quadratic idele class character $\eta$ of $K$ with conductor a power of $v$ has to satisfy $\eta_v(-1)=1$ and, as shown in Proposition \ref{conductor} (iib),  possibile conductors for $\eta$ are $v^2$ and $v^3$. We discuss each case. By Proposition \ref{conductor} (iib), $(1 + \mathcal M_v)/(1 + \mathcal M_v^2) = \langle  -1 \rangle \times \langle 1 + \mu_3 2\rangle$ is a Klein $4$-group and $(1 + \mu_3 2)^c = -(1 + \mu_3 2)$. Let $\xi$ be a primitive quadratic character on $(\Z_K/v^2)^\times \cong \mathcal U_v/(1 + \mathcal M_v^2)$ satisfying $\xi(-1) = 1$, then $\xi^c = \xi$. In this case $\xi(1 + \mu_3 2) = -1$ in order to be primitive. By Corollary \ref{quadratic} there is a unique quadratic idele class character $\eta$ of $K$ with conductor $\ff_\eta = v^2$ and $\eta^c = \eta$ is from base change. 

For conductor $v^3$, from Proposition \ref{conductor} (iib) we know $(1+ \mathcal M_v)/(1 + \mathcal M_v^3) = \langle  -1 \rangle \times \langle1 + \mu_3 2 \rangle \times \langle1 - \mu_3 2^2 \rangle \cong (\Z/2\Z) \times (\Z/4\Z) \times (\Z/2\Z)$. Our quadratic character $\eta_v$ on $\mathcal U_v$ with conductor $v^3$ has to satisfy 
$\eta_v(-1) = 1$ and $\eta_v(1 - \mu_3 2^2) = -1$, hence leaving two possibilities $\eta_v(1 + \mu_3 2) = \pm 1$. Note that $\eta_v^c(1 + \mu_3 2) = \eta_v(-(1 + \mu_3 2)) =\eta_v(1 + \mu_3 2)$ and $\eta_v^c(1 - \mu_3 2^2) = \eta_v(1 - \mu_3^2 2^2) = \eta_v(1 + (\mu_3 + 1) 2^2) = \eta_v(1 + 2^2)\eta_v(1 + \mu_3 2^2) = \eta_v(1 - 2^2) \eta_v(1 - \mu_3 2^2) = \eta_v(1 - \mu_3 2^2)$ since $\eta_v(1 - 2^2) = \eta_v(1 + \mu_3 2)^2 = 1$. This shows that both choices of $\eta_v$ on $\mathcal U_v$ satisfy $\eta_v = \eta_v^c$. Now either choice of $\eta_v$ lifts a primitive character $\xi$ on $(\Z_K/v^3)^\times \cong \mathcal U_v/(1 + \mathcal M_v^3)$ satisfying $\xi(-1) = 1$ and $\xi^c = \xi$. By Corollary \ref{quadratic}, either choice of $\eta_v$ extends to a unique quadratic idele class character $\eta$ of $K$ with conductor $\ff_\eta = v^3$ and $\eta^c = \eta$ is from base change. 

(5) Let $v$ be the place of $K = \Q(\sqrt {-2})$ above $2$. Choose $\pi_v = \sqrt {-2}$. Then $(1 + \pi_v)^2 = 1 + \pi_v^2 - \pi_v^3$, $-1 = 1-2 = 1+ \pi_v^2$. By Proposition \ref{conductor} (iic), we know $\mathcal U_v/(1 + \mathcal M_v^4) = \langle 1+\pi_v \rangle \times \langle 1 + \pi_v^3 \rangle$. A quadratic character $\eta_v$ on $\mathcal U_v$ with conductor $v^4$ satisfies $\chi_v(1 + \pi_v^3) = -1$. Then $\chi_v(-1) = \chi_v((1 + \pi_v)^2 (1 + \pi_v^3)) = \chi_v(1 + \pi_v^3) = -1$. Hence there are no quadratic idele class characters $\eta$ of $K$ with conductor $v^4$ since $\eta(-1) = \eta_v(-1) = -1$. 

Finally consider characters of conductor $v^5$. We have  $\mathcal U_v/(1 + \mathcal M_v^5) = \langle 1+\pi_v \rangle \times \langle 1 + \pi_v^3 \rangle \times \langle 1 + \pi_v^4 \rangle$ by Proposition \ref{conductor} (iic). A quadratic character $\eta_v$ on $\mathcal U_v$ with conductor $v^5$ satisfies $\chi_v(1 + \pi_v^4) = -1$. Further, $\chi_v(-1) = \chi_v((1 + \pi_v)^2 (1 + \pi_v^3)) = \chi_v(1 + \pi_v^3)$. In order that $\chi_v$ extends to an idele class character $\eta$ of $K$, we need $\chi_v(-1)= \chi_v(1 + \pi_v^3) = 1$, which leaves two possibilities for $\eta_v$ on $\mathcal U_v$, namely $\eta_v(1+ \pi_v) = \pm 1$. It follows from Corollary \ref{quadratic} that either choice of $\eta_v$ extends to a quadratic idele class character $\eta$ of $K$. To finish, we show that $\eta_v^c \ne \eta_v$. Note that $\pi_v^c = -\pi_v$ so that $(1 + \pi_v)^c = 1 - \pi_v = 1 + \pi_v + \pi_v^3$. Then $\eta_v^c(1 + \pi_v) = \eta_v(1 + \pi_v + \pi_v^3) = \eta_v((1+ \pi_v)(1 + \pi_v^3)(1 + \pi_v^4)) = - \eta_v(1+ \pi_v)$. Therefore $\eta^c \ne \eta$. Clearly these two characters differ by the quadratic idele class character of $K$ of conductor $v^2$ discussed in (4).

(6) Let $\eta$ be a quadratic idele class character of $K$ ramified at $v$. As $d$ is an odd prime, by Proposition \ref{conductor} (1), the conductor of $\eta_v$ is $v$.  Further, since $d \equiv 3 \mod 4$, $-1$ is not a square in $\kappa_v$, hence $\eta_v(-1) = -1$. In order that $\eta(U_K) = 1$, $\eta$ has to ramify at least at two places. Therefore there are no quadratic idele class characters of $K$ which ramify only at $v$. 
\end{proof}

\begin{cor}\label{2nonsplit} Let $K$ be as in Proposition \ref{reducing conductor-Kimaginary}. Let $v$ be a place of $K$ above $2$. Let $\chi$ be a quadratic idele class character of $K$ with conductor $\ff$. Then up to multiplication by a quadratic idele class character of $K$ from base change with conductor supported at $v$, the following hold: 

(i) If $2$ is inert in $K$, then $\ord_v \ff = 0$ or $2$. In the latter case $\chi_v$ satisfies $\chi_v(-1) = -1$ and $\chi_v(1 + \mu_3 2) = 1$ hence is unique;

(ii) If $2$ ramifies in $K$, then $\ord_v \ff = 0, 4$ or $5$. In the latter two cases, $\chi_v$ satisfies $\chi_v(1 + \pi_v) = 1$ so that it is unique if it has conductor $v^4$. 
\end{cor}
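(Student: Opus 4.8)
The plan is to observe that the statement is entirely local at $v$: multiplying $\chi$ by a base-change character whose conductor is supported at $v$ changes only the local component $\chi_v$ and leaves every $\chi_w$, hence every $\ord_w\ff$ with $w\neq v$, untouched. So I would reformulate the corollary as a statement about the finite group $Q_v$ of quadratic characters of $\mathcal U_v$ modulo the subgroup $B_v$ formed by the local components at $v$ of the base-change quadratic idele class characters of $K$ that are unramified outside $v$. First I would record, from Proposition~\ref{conductor}, the structure of $\mathcal U_v$ modulo squares and the conductor of each quadratic $\chi_v$: for $2$ inert the relevant quotients are generated by $-1$, $1+\mu_3 2$, $1-\mu_3 2^2$ (with $(1+\mu_3 2)^2=1-2^2$), and the conductor exponent lies in $\{0,2,3\}$; for $2$ ramified, so $K=\Q(\sqrt{-2})$ and $\pi_v=\sqrt{-2}$, the generators are $1+\pi_v$, $1+\pi_v^3$, $1+\pi_v^4$, the relation $(1+\pi_v)^2=-(1+\pi_v^3)$ forces $\chi_v(-1)=\chi_v(1+\pi_v^3)$, and the conductor exponent lies in $\{0,2,4,5\}$. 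In each case $Q_v$ has order $8$, and the primitivity condition of Proposition~\ref{conductor} at each level tells exactly which element of $Q_v$ has which conductor.

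Next I would pin down $B_v$ using Proposition~\ref{reducing conductor-Kimaginary}. For $2$ inert, part (4) and the closing sentence of that proposition say the base-change quadratic characters unramified outside $v$ are the trivial one, the unique one of conductor $v^2$, and the two of conductor $v^3$; since such a character is trivial on $-1$, reading off local components shows $B_v$ is precisely the index-$2$ subgroup of $Q_v$ consisting of the characters trivial on $-1$. For $2$ ramified, parts (4)--(5) show the only nontrivial such character is the one of conductor $v^2$, with local component $\eta_v$ satisfying $\eta_v(1+\pi_v)=-1$, $\eta_v(-1)=1$, and trivial on $1+\mathcal M_v^2$; thus $B_v=\{1,\eta_v\}$, and multiplying by $\eta$ flips the value of $\chi_v$ on $1+\pi_v$ while leaving its values on $1+\pi_v^3$ and $1+\pi_v^4$ unchanged.

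Then I would go through the cosets of $Q_v/B_v$. For $2$ inert there are two. When $\chi_v(-1)=1$ one has $\chi_v\in B_v$, so the coset contains the trivial character and I take the representative with $\ord_v\ff=0$. When $\chi_v(-1)=-1$, multiplying by the element of $B_v$ that matches $\chi_v$ on $1+\mu_3 2$ and $1-\mu_3 2^2$ yields the character that is nontrivial on $-1$ and trivial on $1+\mathcal M_v^2$; this has conductor $v^2$ and, together with triviality on $\mu_3$, is the unique character with the values listed in~(i). For $2$ ramified, the coset of the trivial character reduces to $\ord_v\ff=0$ (this absorbs the conductor-$v^2$ characters), and in every other coset I multiply by $\eta$ if necessary to arrange $\chi_v(1+\pi_v)=1$ without changing the conductor, which leaves $\ord_v\ff\in\{4,5\}$; when $\ord_v\ff=4$ the remaining values ($\chi_v(1+\pi_v^3)=-1$ and $\chi_v$ trivial on $1+\mathcal M_v^4$) are forced, so $\chi_v$ is then unique.

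The hard part will be the bookkeeping that links the first two steps: I will need to use the relations among $-1$ and the other topological generators of $1+\mathcal M_v$ carefully enough to be sure that multiplication by a base-change character has exactly the advertised effect on the conductor — in particular that it never raises it — and that the normalized $\chi_v$ genuinely has the claimed conductor and is the unique character with the listed values. Getting $B_v$ right is where Proposition~\ref{reducing conductor-Kimaginary} does the essential work: in the ramified case the crucial point is that the conductor-$v^5$ characters, not arising from base change, are unavailable for the reduction, so one cannot push a conductor-$v^5$ character down to a smaller conductor.
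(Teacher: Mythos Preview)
Your proposal is correct and follows essentially the same approach as the paper's own proof: both use Proposition~\ref{conductor} to list the possible local quadratic characters at $v$ and then use Proposition~\ref{reducing conductor-Kimaginary} to identify which of them arise as local components of base-change characters, reducing case by case. Your framing in terms of the coset space $Q_v/B_v$ is a slightly more systematic way of organizing the same case analysis the paper carries out explicitly, but the substance and the key inputs are identical.
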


\begin{proof} Suppose $\chi_v$ is ramified. 

(i) $2$ is inert in $K$. Then the conductor of $\chi_v$ is either $v^2$ or $v^3$ by Proposition \ref{conductor} (iib). If $\chi_v(-1) = 1$, then the character $\eta$ in Proposition \ref{reducing conductor-Kimaginary} (4) is from base change and $\eta_v = \chi_v$ on the group of units $\mathcal U_v$ so that $v$ is coprime to the conductor of $\chi \cdot \eta^{-1}$.   

Now assume $\chi_v(-1) = -1$. If it has conductor $v^3$, then the proof of Proposition \ref{reducing conductor-Kimaginary} (4) above shows the existence of a quadratic idele class character $\eta$ of $K$ with conductor $v^3$ and is from base change such that $\eta_v(1 - \mu_3 2^2) = \chi_v(1 - \mu_3 2^2)=-1$ and $\eta_v(1 + \mu_3 2) = \chi_v(1 + \mu_3 2) = \pm 1$. In this case $\chi_v \cdot \eta_v^{-1}$  has conductor $v^2$, and $\chi_v \cdot \eta_v^{-1}(-1) = -1$. If 
$\chi_v$ has conductor $v^2$ and $\chi_v(1 + \mu_3 2) = -1$, then the quadratic idele class character $\eta$ of $K$ with conductor $v^2$ as in Proposition \ref{reducing conductor-Kimaginary} (4) is from base change and it is such that $\chi_v \cdot \eta_v^{-1}(1 + \mu_3 2) = 1$ and it has conductor $v^2$. So for the case $\chi_v(-1) = -1$, after multiplying by an idele class character of $K$ arising from base change supported at $v$, we may assume $\ord_v \ff = 2$, $\chi_v(-1) = -1$, and $\chi_v(1 + \mu_3 2) = 1$.  

(ii) $2$ ramifies in $K$. Then the conductor of $\chi_v$ is either $v^2$, $v^4$ or $v^5$ by Proposition \ref{conductor} (iic). If $\chi_v$ has conductor $v^2$, then the character $\eta$ in Proposition \ref{reducing conductor-Kimaginary} (4) with conductor $v^2$ is from base change and $\eta_v = \chi_v$ on the group of units $\mathcal U_v$, therefore $v$ is coprime to the conductor of $\chi \cdot \eta^{-1}$. For the remaining cases, no such $\eta$ with conductor $v^4$ or $v^5$ from base change exists by Proposition \ref{reducing conductor-Kimaginary} (5), hence $\ord_v \ff$ is either $4$ or $5$. Finally, multiplying $\chi$ by the $\eta$ from Proposition \ref{reducing conductor-Kimaginary} (4) with conductor $v^2$ if necessary, we may assume $\chi_v(1 + \pi_v) = 1$ without affecting the conductor of $\chi_v$.  
\end{proof}

Given  a quadratic idele class character $\chi$ of $K$ with conductor $\ff$, Proposition \ref{conductor} describes possible powers of each prime ideal dividing $\ff$. Using Proposition \ref{reducing conductor-Kimaginary}, by multiplying $\chi$ by suitable quadratic idele class characters of $K$ obtained from base change from $\Q$, we can reduce the factors in $\ff$ and limit the places occurring in $\ff$. More precisely, by Proposition \ref{reducing conductor-Kimaginary} (2) we can remove those places above an inert odd prime; for the two places above a split prime, by Proposition \ref{reducing conductor-Kimaginary} (3), we can either remove both of them, or make $\chi$ ramified at one of the designated place; Proposition \ref{reducing conductor-Kimaginary} (4) allows us to simplify the factors above $2$ by either removing or putting further restrictions on $\chi$; while Proposition \ref{reducing conductor-Kimaginary} (5) says that if $v$ is the place above $2$ which ramifies in $K$, then no further reduction at $v$ is possible if $v^4$ or $v^5$ divides $\ff$, and similarly Proposition \ref{reducing conductor-Kimaginary} (6) says that no further reduction at $v$ above the (at most one) ramified odd prime in $K$ is possible. The result at the place above $2$ which does not split in $K$ is summarized in Corollary \ref{2nonsplit}. This proves the first statement of the following classification theorem for quadratic characters.

\begin{thm}\label{2-classification} Let $K$ be an imaginary quadratic extension of $\Q$. Assume the class number of $K$ is odd and $U_K = \{\pm 1\}$ or $\langle \mu_6 \rangle$. For each prime $p$ split in $K$, choose one place $v$ of $K$ above $p$ and let $S_K$ be the collection of these chosen places. Then  

(I) Up to multiplication by characters from base change, a quadratic idele class character $\chi$ of $K$ with conductor $\ff$ satisfies the following conditions:

(A)$_i$ No places above an odd inert prime divide $\ff$;

(B)$_i$ At most one place $v$ above a ramified prime $\equiv 3 \mod 4$ divides $\ff$, and $\ord_v \ff = 1$; 

(C)$_i$ If a place $v$ above a split prime $p$ divides $\ff$ to the power $m(v)$, then $v \in S_K$ and $m(v) = 1$ for $p$ odd, and $m(v) = 2$ or $3$ for $p=2$; 

(D)$_i$ If a place $v$ above $2$ divides $\ff$ to the power $m(v)$ and $2$ does not split in $K$, then $m(v) = 4$ or $5$ and $\chi_v(1 + \pi_v) = 1$ if $2$ ramifies in $K$, and $m(v) = 2$ and $\chi_v(-1) = -1$ and $\chi_v(1 + \mu_3 2) = 1$ if $2$ is inert in $K$. 

(II) Any quadratic idele class character $\chi$ of $K$ with nontrivial conductor $\ff$ satisfying (A)$_i$-(D)$_i$ does not arise from base change. In other words, $\chi^c = \chi \cdot \delta$ for some quadratic idele class character $\delta$ of $K$.

(III) No two distinct quadratic idele class characters of $K$ satisfying (A)$_i$-(D)$_i$ differ by multiplication by an idele class character of $K$ from base change.

(IV) Let $\ff = \prod_{v~{\rm finite}} v^{m(v)}$ be an integral ideal of $K$ with $m(v)$ satisfying (A)$_i$-(D)$_i$. Denote by $r(\ff)$ the number of places $v$ occurring in $\ff$ such that $v$ is above a prime $p \equiv 3 \mod 4$. Then there is a quadratic idele class character $\chi$ of $K$ with conductor $\ff$ satisfying the conditions (A)$_i$-(D)$_i$ if and only if $r(\ff)$ is even if no $v|\ff$ is above $2$, and $r(\ff)$ is odd if there is a prime $v | \ff$ above $2$ with $m(v) = 2$ or $4$. 
\end{thm}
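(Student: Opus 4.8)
Since statement~(I) is already established by the discussion preceding the theorem, the plan is to prove (II), (III) and (IV), using throughout the local structure results of Proposition~\ref{conductor}, the normalizations of Proposition~\ref{reducing conductor-Kimaginary} and Corollary~\ref{2nonsplit}, and the lifting criterion Corollary~\ref{quadratic}(a). Recall that $K=\Q(\sqrt{-d})$ with $d=2$ or $d\equiv 3\bmod 4$ prime, so $K/\Q$ ramifies at a single prime ($d$ if $d$ is odd, $2$ if $d=2$); and that $\chi_\infty$ is trivial for every idele class character $\chi$ of $K$ (Proposition~\ref{constraint-on-xi}(ii)), so the constraint $\xi(\chi)(U_K)=1$ amounts to $\chi(-1)=\prod_{v\mid\ff_\chi}\chi_v(-1)=1$ (the extra condition $\chi(\mu_3)=1$ when $d=3$ being automatic for quadratic $\chi$).

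For (II), suppose $\chi$ satisfies (A)$_i$--(D)$_i$ with $\ff:=\ff_\chi\neq(1)$ and, for contradiction, that $\chi$ arises from base change, i.e.\ $\chi^c=\chi$, so $\ff=\ff^c$. If a place $v$ above a split prime divides $\ff$, then (C)$_i$ forces $v\in S_K$ and $v^c\nmid\ff$, contradicting $\ff=\ff^c$; hence $\ff$ involves no split place and, by (A)$_i$, no inert odd place. Thus $\ff$ is divisible only by the place $v_d$ above the ramified odd prime (to the first power, by (B)$_i$) and by the place $v_2$ above $2$ when $2$ is non-split. Proposition~\ref{reducing conductor-Kimaginary}(6) rules out $\ff=v_d$, and $\chi(-1)=1$ rules out $\ff=v_2^2$ ($2$ inert) or $\ff=v_2^4$ ($2$ ramified), since there $\chi_{v_2}(-1)=-1$ by Corollary~\ref{2nonsplit}; the only survivors are $\ff=v_dv_2^2$ ($2$ inert) and $\ff=v_2^5$ ($K=\Q(\sqrt{-2})$). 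In each survivor $\chi_{v_2}$ is pinned down by the normalization of Corollary~\ref{2nonsplit}, and the explicit $c$-action on the generators of $\mathcal U_{v_2}/(1+\mathcal M_{v_2}^{m(v_2)})$ in Proposition~\ref{conductor}(iib)--(iic) — e.g.\ $(1+\mu_3 2)^c=-(1+\mu_3 2)$ with $\chi_{v_2}(-1)=-1$, $\chi_{v_2}(1+\mu_3 2)=1$ in the inert case, or $(1+\pi_{v_2})^c=1+\pi_{v_2}+\pi_{v_2}^3$ in the ramified case — yields $\chi_{v_2}^c\neq\chi_{v_2}$, a contradiction. So $\chi$ is not a base change, and Theorem~\ref{quadratic chi} gives $\chi^c=\chi\cdot\delta$ for a quadratic $\delta$, which is (II).

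For (III), let $\chi,\chi'$ both satisfy (A)$_i$--(D)$_i$ with $\lambda:=\chi\chi'$ coming from base change, so $\lambda^c=\lambda$ and $\ff_\lambda$ is $c$-invariant; I will show $\lambda=1$ by checking that $\lambda$ is unramified at every finite place, which forces $\lambda=\nu\circ N_{K/\Q}$ with $\nu$ everywhere unramified on $\Q$, hence $\nu=1=\lambda$. At an inert odd place, $\chi,\chi'$ are unramified by (A)$_i$. At a split place $v\in S_K$, (C)$_i$ gives $v^c\nmid\ff_\chi\ff_{\chi'}$, so $\lambda$ is unramified at $v^c$ and $c$-invariance of $\ff_\lambda$ forces $\ord_v\ff_\lambda=\ord_{v^c}\ff_\lambda=0$. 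At the ramified odd place $v_d$, each of $\chi_{v_d},\chi'_{v_d}$ restricts on $\mathcal U_{v_d}$ either trivially or to the unique quadratic character of $\kappa_{v_d}^\times$, the local component of the character cutting out $K/\Q$; since that character has trivial base change to $K$, its norm group $N_{K_{v_d}/\Q_d}(\mathcal U_{v_d})$ lies in the kernel of that quadratic character, so any base-change $\lambda$ is trivial on $\mathcal U_{v_d}$. At the place $v_2$ above $2$ when $2$ is non-split, $\lambda_{v_2}$, being base change, is trivial on every unit whose norm to $\Q_2$ is a square (e.g.\ $N(-1)=1$, and in the ramified case $N(1+\pi_{v_2}^3)=9$, $N(1+\pi_{v_2}^4)=25$, with the inert case analogous), while the normalizations of Corollary~\ref{2nonsplit} make $\chi_{v_2}$ and $\chi'_{v_2}$ agree on the one remaining generator ($\chi_{v_2}(1+\pi_{v_2})=1$ in the ramified case, $\chi_{v_2}(1+\mu_3 2)=1$ in the inert case); as these generators generate $\mathcal U_{v_2}/(1+\mathcal M_{v_2}^{m})$ for the relevant $m\le 5$ and a quadratic character of $K_{v_2}$ has conductor exponent $\le m$, $\lambda_{v_2}$ is trivial on $\mathcal U_{v_2}$. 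Hence $\lambda$ is everywhere unramified, $\lambda=1$, and $\chi=\chi'$, proving (III).

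For (IV), by Corollary~\ref{quadratic}(a) a primitive quadratic character $\xi=\prod_{v\mid\ff}\xi_v$ of $(\Z_K/\ff)^\times$ lifts to a quadratic idele class character $\chi$ with conductor exactly $\ff$ if and only if $\xi(U_K)=1$, i.e.\ $\prod_{v\mid\ff}\xi_v(-1)=1$; such $\chi$ then automatically satisfies (A)$_i$--(D)$_i$ (conductor $\ff$, normalizations built into $\xi$), and conversely any $\chi$ satisfying (A)$_i$--(D)$_i$ with conductor $\ff$ has $\prod_{v\mid\ff}\xi(\chi)_v(-1)=\chi(-1)=1$. So the theorem reduces to a parity count of the local signs $\xi_v(-1)$: from Proposition~\ref{conductor} and (A)$_i$--(D)$_i$, $\xi_v(-1)=\leg{-1}{p}=-1$ at every $v\mid\ff$ above a prime $p\equiv 3\bmod 4$ (exactly the $r(\ff)$ places counted, whether split odd or the ramified $v_d$), $\xi_v(-1)=+1$ at a split place above $p\equiv 1\bmod 4$, $\xi_v(-1)=-1$ at the place above $2$ when $m(v)\in\{2,4\}$ (forced by primitivity together with the normalization), and $\xi_v(-1)$ is a free choice in $\{\pm1\}$ (both realizable) at the place above $2$ when $m(v)\in\{3,5\}$. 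Hence $\prod_{v\mid\ff}\xi_v(-1)=(-1)^{r(\ff)}\varepsilon$ with $\varepsilon=+1$ if no $v\mid\ff$ lies above $2$, $\varepsilon=-1$ if some $v\mid\ff$ above $2$ has $m(v)\in\{2,4\}$, and $\varepsilon$ at our disposal otherwise; so the lifting condition holds exactly when $r(\ff)$ is even in the first case and odd in the second (and unconditionally in the third), which is statement (IV). The main obstacle throughout is the place above $2$: its unit filtration $\mathcal U_{v_2}/(1+\mathcal M_{v_2}^m)$ and the action of $c$ on it (Proposition~\ref{conductor}(iia)--(iic)) drive both the list of admissible conductor exponents and the delicate sign/normalization bookkeeping, and in (III) one must additionally control which base-change quadratic characters of $K$ can ramify at $2$; everything away from $2$ is governed by the Legendre symbol and by the single fact that the quadratic character attached to $K/\Q$ restricts trivially to $G_K$.
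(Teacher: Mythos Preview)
Your proofs of (II) and (IV) follow essentially the same line as the paper's: for (II) you reduce to the cases where $\ff=\ff^c$ is supported only at non-split places and dispatch each survivor via the explicit $c$-action on the local unit groups, and for (IV) you reduce existence to the parity of $\prod_{v\mid\ff}\chi_v(-1)$ and tabulate the local signs. These are correct and match the paper.

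For (III), however, you take a genuinely different route from the paper, and your argument has a gap. The paper proves (III) very cleanly by observing that $\delta:=\chi\chi'$ \emph{again satisfies} (A)$_i$--(D)$_i$ (the only nontrivial check is (D)$_i$ at the place above a non-split $2$, handled by a short case split on whether each of $\chi,\chi'$ is ramified there), and then invoking the already-proved (II): if $\delta$ were a nontrivial base change satisfying (A)$_i$--(D)$_i$, (II) would be violated. Your approach instead tries to show directly that $\lambda=\chi\chi'$ is everywhere unramified. The problem is your key local claim that a base-change character $\lambda=\nu\circ N_{K/\Q}$ is ``trivial on every unit whose norm to $\Q_2$ is a square'': this would follow if $\nu$ were quadratic, since then $\nu_2(s^2)=1$, but a priori one only knows $\nu^2\in\{1,\delta_{K/\Q}\}$, and you have not ruled out $\nu^2=\delta_{K/\Q}$. (It \emph{can} be ruled out for imaginary $K$, since $\delta_{K/\Q}(-1)=-1$ while $\nu(-1)^2=1$ for any Dirichlet character $\nu$; but this step is missing from your argument.) Similarly, your reasoning at the ramified odd place $v_d$ --- that $N_{K_{v_d}/\Q_d}(\mathcal U_{v_d})$ lies in the kernel of the character cutting out $K/\Q$, ``so any base-change $\lambda$ is trivial on $\mathcal U_{v_d}$'' --- is a non sequitur as written: knowing $N(\mathcal U_{v_d})\subset\ker(\delta_{K/\Q})_d$ tells you only that the base change of $\delta_{K/\Q}$ itself is unramified at $v_d$, not that an arbitrary base-change $\lambda$ is. Again the fix is to first argue that $\nu$ must be quadratic. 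Once that is supplied your approach does go through, but the paper's reduction to (II) is both shorter and avoids these local norm computations entirely.
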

\begin{proof} It remains to prove assertions (II)-(IV).

(II) The second statement follows from Theorem \ref{quadratic chi}. The first statement is equivalent to $\chi \ne \chi^c$ since both are quadratic idele class characters of $K$. 
This is obvious if some place $v$ above a split prime $p$ divides $\ff$ because $v^c$ does not divide $\ff$ by condition (C)$_i$ and it divides the conductor of $\chi^c$. Now suppose no places in $S_K$ divide $\ff$. Then the only places $v$ dividing $\ff$ are above primes $p$ which are either ramified or inert in $K$. Condition (A)$_i$ implies that $p$ cannot be an odd inert prime. If $K = \Q(\sqrt {-2})$, this forces $p=2$ and $\ff = v^4$ or $v^5$ by condition (D)$_i$. We conclude from  Proposition \ref{reducing conductor-Kimaginary} (5) that $\ff = v^5$ and $\chi \ne \chi^c$. The remaining case is $K = \Q(\sqrt {-d})$ for $d>3$ a prime $\equiv 3 \mod 4$ by Proposition \ref{reducing conductor-Kimaginary} (1). So $d$ is the only prime ramified in $K$. If $v$ is above $p=d$, then $\chi_v(-1) = -1$ since $-1$ is not a square in the residue field $\kappa_v$. In order that $\chi(-1) = 1$, one place $v'$ above $2$ has to divide $\ff$. Therefore $2$ is inert in $K$. Condition (D)$_i$ implies that $\chi_{v'}(-1) = -1$, as it should, and $v'$ divides $\ff$ to the power $2$. Using Proposition \ref{conductor} (iib), we get $\chi_{v'}^c(1 + \mu_3 2) = \chi_{v'}(-(1 + \mu_3 2)) = -\chi_{v'}(1 + \mu_3 2)$, showing $\chi^c \ne \chi$. 

(III) Let $\chi$ and $\eta$ be two distinct quadratic idele class characters of $K$ satisfying (A)$_i$-(D)$_i$. Then $\delta :=\chi \eta^{-1} = \chi \eta$ is also a quadratic idele class character of $K$ whose conductor $\ff_\delta$ divides the least common multiple of $\ff_\chi$ and $\ff_\eta$. It clearly satisfies conditions (A)$_i$-(C)$_i$. If (D)$_i$ is also satisfied, then (III) will follow from (II). To check the condition (D), suppose $v$ is a place of $K$ above $2$ dividing $\ff_\chi \ff_\eta$ and $2$ does not split in $K$. We distinguish two cases. 

Case (a) $2$ is inert in $K$. If one of $\chi$ and $\eta$, say, $\eta$, is unramified at $v$, then on $\mathcal U_v$, we have $\delta_v = \chi_v$. If both $\chi$ and $\eta$ are ramified at $v$, then, $\chi_v = \eta_v$ on $\mathcal U_v$ by (D)$_i$ so that $\delta_v$ is unramified at $v$. In both cases the condition (D)$_i$ holds for $\delta_v$.

Case (b) $2$ ramifies in $K$. Then $\delta_v = \chi_v \eta_v$ has conductor at most $v^5$. If it has conductor $v^4$ or $v^5$, then $\delta_v(1 + \pi_v) = 1$ follows from $\chi_v(1 + \pi_v) = \eta_v(1 + \pi_v) = 1$. If $\delta_v$ has conductor less than $v^4$, then it is trivial on $\mathcal U_v$ since $\mathcal U_v/(1 + \mathcal M_v^3) = \langle 1 + \pi_v \rangle$ and $\chi_v(1 + \pi_v) = \eta_v(1 + \pi_v) = 1$. Hence (D)$_i$ holds for $\delta_v$ in both situations.

(IV) First assume $\chi$ exists. Since $K$ is imaginary, the local component $\chi_\infty$ is the trivial character of $\mathbb C^\times$. Therefore $\chi^\infty(-1) = \prod_{v | \ff} \chi_v(-1) = 1$. Suppose $v | \ff$ is above the prime $p$. If $p$ is odd, then $m(v) = 1$ and $\chi_v(-1) = -1$ if and only if $p \equiv 3 \mod 4$. If $p=2$ and $m(v)=2$, then $\chi_v(-1) = -1$ by Proposition \ref{conductor}, (iia) for $2$ splits, and (D)$_i$ for $2$ inert. Further if $p=2$ and $m(v) = 4$ (hence $2$ ramifies), we have $\chi_v(-1) = -1$ as shown in the proof of Proposition \ref{reducing conductor-Kimaginary}, (5). 
Hence $1 = (-1)^{r(\ff)}$ if no prime above $2$ divides $\ff$, and $1 = (-1)^{r(\ff) + 1}$ if there is a prime $v$ above $2$ divides $\ff$ and $m(v) = 2$ or $4$. For the remaining case that some $v|\ff$ with $m(v) = 3$ or $5$ (hence $p=2$), there are two choices of $\chi_v$ satisfying (D)$_i$ so that $\chi_v(-1) = \pm 1$. With given $r(\ff)$ there is a unique choice to make $\chi^\infty(-1) = 1$. This proves the necessity. 

Conversely, assume $\ff$ and $r(\ff)$ satisfy the hypotheses, we proceed to prove the existence of a quadratic idele class character $\chi$ of $K$ as described. For each $v$ occurring in $\ff$, let $\chi_v$ be a quadratic character of $\mathcal U_v$ with conductor $v^{m(v)}$ and satisfying (D)$_i$ if $v$ is above $2$ which does not split in $K$. Such $\chi_v$ is unique except for $v$ above $2$ which either splits in $K$ with $m(v) = 3$ or ramifies in $K$ with $m(v) = 5$. In each case there are two choices for $\chi_v$ and we choose the one so that $\prod_{v|\ff} \chi_v(-1) = 1$. When there is no choice, this identity follows from the assumption on $r(\ff)$.   

Since $(\mathbb Z_K/\ff)^\times \simeq \prod_{v~{\rm finite}}\mathcal U_v/(1 + \mathcal M_v^{m(v)})$, the product $\prod_{v |\ff}\chi_v$ lifts a unique quadratic primitive character $\xi$ of $(\mathbb Z_K/\ff)^\times$ such that $\xi(U_K)=1$. By Corollary \ref{quadratic}, $\xi$ lifts to a quadratic idele class character $\chi$ of $K$ satisfying (A)$_i$-(D)$_i$.
 \end{proof}

The remaining imaginary quadratic $K$ over $\Q$ with odd class number is $K =\Q(\sqrt {-1}) = \Q(\mu_4)$. Its group of units is $U_K=\langle \mu_4 \rangle$. We go over the statements in Proposition \ref{reducing conductor-Kimaginary} one by one. Let $v$ be a place $v$ of $K$ above a prime $p$. For $p$ inert in $K$, i.e. $p \equiv 3 \mod 4$, 
 since $\mu_4$ is a square in $\kappa_v^\times$ which is cyclic of order $p^2-1$, divisible by $8$, statement (2) holds. For $p$ split in $K$, i.e., $p\equiv 1 \mod 4$, the argument in the proof for (3) goes through with $-1$ replaced by $\mu_4$, so (3) also holds. Finally for $p=2$ which ramifies in $K$, choose $\pi_v = \mu_4 - 1$ so that $1 + \pi_v = \mu_4$. Then $\mu_4^c = - \mu_4$ and $\pi_v^c = -\mu_4 - 1 = \mu_4 \pi_v$. By Proposition \ref{conductor} (iic), a quadratic character $\eta_v$ on $\mathcal U_v$ has conductor $v^2, v^4$, or $v^5$. If $\eta_v$ has conductor $v^2$, then $\eta_v(\mu_4) = -1$ and it cannot be extended to an idele class character of $K$ with conductor $v^2$. If $\eta_v$ has conductor $v^4$, then $\eta_v(1 + \pi_v^3) = -1$ and $\eta_v(\mu_4) = 1$ is the only such character which can be extended to a quadratic idele class character $\eta$ of $K$ with conductor $v^4$ by Corollary \ref{quadratic}. In this case $\eta^c = \eta$ so that it is from base change. Similarly, there are two quadratic idele class characters $\eta$ of $K$ with conductor $v^5$, given by $\eta_v(\mu_4) = 1, \eta_v(1 + \pi_v^3) = \pm 1$, and $\eta_v(1 + \pi_v^4) = -1$. We check that $\eta_v^c(1 + \pi_v^3) = \eta_v(1 + \mu_4^3 \pi_v^3) = \eta_v(1 + (1 + \pi_v)^3\pi_v^3) = \eta_v(1 + \pi_v^3 +\pi_v^4)
 =\eta_v((1+ \pi_v^3)(1+\pi_v^4))= - \eta_v(1 + \pi_v^3)$ since $\eta_v$ has conductor $\mathcal M_v^5$. So none of the characters $\eta$ with conductor $v^5$ is from base change. 
 The order of the conductor $\ff$ at $v$ of a quadratic idele class character $\chi$ of $K$ is among $0, 2, 4, 5$.  Multiplying it by the above idele class character $\eta$ of conductor $v^4$ from base change if necessary, we may assume that $\chi$ satisfies $\chi_v(1 + \pi_v^3) = 1$ and  $\ord_v \ff = 5, 2$ or $0$.  
 
 Similar arguments as before with the unit $-1$ replaced by $\mu_4$ prove the following classification theorem for $K = \Q(\sqrt {-1})$. Note that condition (B)$_i$ is automatically satisfied since no odd primes ramify in $\Q(\sqrt {-1})$. 
 
 \begin{thm}\label{2-Q(i)} Let $K = \Q(\sqrt {-1})$. For each prime $p$ split in $K$, choose one place $v$ of $K$ above $p$ and let $S_K$ be the collection of these chosen places. Then    

(I) Up to multiplication by characters from base change, a quadratic idele class character $\chi$ of $K$ with conductor $\ff$ satisfies the conditions (A)$_i$-(C)$_i$ in Theorem \ref{2-classification} and

(D)$_i$' If a place $v$ above $2$ divides $\ff$ to the power $m(v)$, then $m(v) = 2$ or $5$. Further $\chi_v(\mu_4) = -1$ if $m(v)=2$, and $\chi_v(1 + \pi_v^3) = 1$ if $m(v) = 5$.
Here $\pi_v = \mu_4 - 1$.

(IV) Let $\ff = \prod_{v~{\rm finite}} v^{m(v)}$ be an integral ideal of $K$ with $m(v)$ satisfying (A)$_i$-(C)$_i$ and (D)$_i$'. Denote by $r(\ff)$ the number of places $v$ occurring in $\ff$ such that $v$ is above a prime $p \equiv 5 \mod 8$. Then there is a quadratic idele class character $\chi$ of $K$ with conductor $\ff$ satisfying the conditions (A)$_i$-(C)$_i$ and (D)$_i$' if and only if $r(\ff)$ is even if no $v|\ff$ is above $2$, and $r(\ff)$ is odd if there is a prime $v | \ff$ above $2$ with $m(v) = 2$. 

Moreover, (II) and (III) in Theorem \ref{2-classification} hold with (D)$_i$ replaced by (D)$_i$'. 

\end{thm}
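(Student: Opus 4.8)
The plan is to reuse the proof of Theorem~\ref{2-classification} almost verbatim, with the unit group $U_K=\{\pm1\}$ there replaced everywhere by $U_K=\langle\mu_4\rangle$; the local analysis underlying that theorem (Proposition~\ref{reducing conductor-Kimaginary}) has already been redone for $K=\Q(\sqrt{-1})$ in the paragraph immediately preceding the statement, so only the bookkeeping remains. For part~(I), one observes that that paragraph is exactly the analogue for $\Q(\sqrt{-1})$ of Proposition~\ref{reducing conductor-Kimaginary}(2)--(5): a place above an odd inert prime is removed by multiplying by a base-change character; for a split odd prime one keeps only the chosen place of $S_K$, to the first power; and at the prime $2$, which ramifies, after multiplying by the base-change quadratic character of conductor $v^4$ one is left with $\ord_v\ff\in\{0,2,5\}$, with $\chi_v(\mu_4)=-1$ forced when $\ord_v\ff=2$ and $\chi_v(1+\pi_v^3)=1$ achievable when $\ord_v\ff=5$. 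Condition (B)$_i$ is vacuous for $\Q(\sqrt{-1})$. Assembling these exactly as in the proof of part~(I) of Theorem~\ref{2-classification} gives (I) with (D)$_i$ replaced by (D)$_i$'.

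Parts~(II) and~(III) I would obtain by transcribing the corresponding arguments of Theorem~\ref{2-classification}, reading $\mu_4$ for $-1$ and (D)$_i$' for (D)$_i$. For~(II), Theorem~\ref{quadratic chi} reduces the claim to $\chi\ne\chi^c$: if a chosen split place $v\in S_K$ divides $\ff$, then $v^c$ divides $\ff_{\chi^c}$ but not $\ff=\ff_\chi$; otherwise $\ff$ is supported only at~$2$ (odd inert primes are excluded by (A)$_i$ and no odd prime ramifies in $\Q(\sqrt{-1})$), and since $\chi(\mu_4)=1$ rules out $\ff=v^2$, we must have $\ff=v^5$, where the congruence $(1+\pi_v)^3\pi_v^3\equiv\pi_v^3+\pi_v^4\bmod\mathcal M_v^5$ from the preceding paragraph gives $\chi^c_v(1+\pi_v^3)=-\chi_v(1+\pi_v^3)\ne\chi_v(1+\pi_v^3)$. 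For~(III), if $\chi\ne\eta$ both satisfy (A)$_i$--(D)$_i$', then $\delta=\chi\eta^{-1}$ is a nontrivial quadratic idele class character with conductor dividing $\lcm(\ff_\chi,\ff_\eta)$; a short case analysis at the place above~$2$ (the possible conductor exponents after~(I) being only $0,2,5$) shows $\delta$ again satisfies (D)$_i$', and then~(II) prevents $\delta$ from arising from base change.

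For part~(IV), the key point is that $K$ is a CM field, so $\chi_\infty$ is trivial and $\chi$ being an idele class character forces $\prod_{v\mid\ff}\chi_v(\mu_4)=\chi(\mu_4)=1$. One then computes the factors place by place: for $v$ above a split prime $p$ one has $\ord_v\ff=1$ and $\chi_v$ the unique quadratic character of $\kappa_v^\times$, under which the image of $\mu_4$ (of order $4$) is a square precisely when $8\mid p-1$, so $\chi_v(\mu_4)=1$ if $p\equiv1\bmod8$ and $\chi_v(\mu_4)=-1$ if $p\equiv5\bmod8$; for $v$ above $2$ with $\ord_v\ff=2$, (D)$_i$' forces $\chi_v(\mu_4)=-1$; for $v$ above $2$ with $\ord_v\ff=5$, the value $\chi_v(\mu_4)=\pm1$ is not constrained by (D)$_i$'. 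Hence $\prod_{v\mid\ff}\chi_v(\mu_4)$ equals $(-1)^{r(\ff)}$ when no $v\mid\ff$ lies above~$2$ and $(-1)^{r(\ff)+1}$ when a $v\mid\ff$ above~$2$ has $\ord_v\ff=2$, while it can be adjusted to $1$ for arbitrary $r(\ff)$ when a $v\mid\ff$ above~$2$ has $\ord_v\ff=5$; this shows the stated parity condition is necessary. Conversely, given $\ff$ and $r(\ff)$ as in the hypothesis, I would choose for each $v\mid\ff$ the quadratic local character of the prescribed conductor satisfying (D)$_i$' (unique except for the $\mu_4$-value at a place above~$2$ with $\ord_v\ff=5$, which I fix so that $\prod_{v'\mid\ff}\chi_{v'}(\mu_4)=1$), obtaining a primitive quadratic character $\xi$ of $(\Z_K/\ff)^\times$ with $\xi(U_K)=1$; then Corollary~\ref{quadratic}(a) lifts $\xi$ to the desired idele class character $\chi$.

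The main obstacle is the local bookkeeping at the ramified prime~$2$: cataloguing, for each admissible conductor exponent, the quadratic characters of $\mathcal U_v$, determining which of them extend to idele class characters of~$K$ and which come from base change. That is precisely the content of the paragraph preceding the theorem; once it is available, the rest is a mechanical transcription of the proof of Theorem~\ref{2-classification}, the only steps requiring real care being the two elementary square/non-square computations --- whether $\mu_4$ is a square in $\kappa_v^\times$ for split~$p$ (governed by $p\bmod 8$) and the structure of $\mathcal U_v/(1+\mathcal M_v^m)$ at the place above~$2$.
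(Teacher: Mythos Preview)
Your proposal is correct and follows the same approach as the paper, which simply states that ``similar arguments as before with the unit $-1$ replaced by $\mu_4$ prove'' the theorem, noting that (B)$_i$ is vacuous since no odd prime ramifies in $\Q(\sqrt{-1})$. You have in fact spelled out more of the bookkeeping (the ruling out of $\ff=v^2$ alone via $\chi(\mu_4)=1$, the case analysis for (III), and the $p\bmod 8$ computation of $\chi_v(\mu_4)$ for (IV)) than the paper does, but the underlying strategy---transcribe the proof of Theorem~\ref{2-classification} using the local analysis at $2$ from the preceding paragraph---is identical.
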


\subsection{Classification of quadratic idele class characters over real quadratic fields up to base change}

Next we consider the case for real quadratic $K = \Q(\sqrt d)$ with $d>0$ square-free integer along the same vein. Gauss' genus theory says that the narrow class number $h^+(K)$ of $K$ is odd if and only if its discriminant $d_K$ has one prime factor, which holds precisely when $d$ is a prime $\equiv 1 \mod 4$ or $d=2$. In this case $K$ contains a unit with norm $-1$, hence the class number $h(K) = h^+(K)$. The remaining case for $h(K)$ odd is when $h^+(K) = 2m$ with $m$ odd and there is no unit of negative norm so that $h(K) = h^+(K)/2 = m$ is odd. In this case $d_K$ has two prime factors, hence either $d_K = 4d$ for a prime $d \equiv 3 \mod 4$ or $d = p_1p_2$ with two distinct primes $p_1 \equiv p_2 \mod 4$ since $d_K \equiv 1 \mod 4$. In the former case $d$ a prime, the field $K$ does not contain a unit with norm $-1$, hence $h(K)$ is odd. In the latter case $d = p_1p_2$, if $p_1 \equiv p_2 \equiv 3 \mod 4$, again $K$ has no unit with negative norm so that $h(K)$ is odd. If $p_1 \equiv p_2 \equiv 1 \mod 4$, then the situation is more complicated. Dirichlet (1834) showed that if the Legendre symbol $(\frac{p_1}{p_2})= -1$, then there is a unit of negative norm, so that $h(K) = h^+(K)$ is even. But if $(\frac{p_1}{p_2})= 1$ and the product of the quartic residue symbos $(\frac{p_1}{p_2})_4 (\frac{p_2}{p_1})_4=-1$ then there is no unit of negative norm, so $h(K)$ is odd. 

To illustrate the key points, we consider the simplest case of real quadratic fields and state conclusions similar to the case of imaginary quadratic fields studied above.

\begin{prop}\label{reduction of conductor-Kreal}Let $K=\Q(\sqrt d)$, where $d= 2$ or a prime $\equiv 1 \mod 4$, be real quadratic with Galois group $\Gal(K/\Q)=\langle c \rangle$. Then the following hold.

(1) $d$ is the only prime ramified in $K$. Further, the class number of $K$ is odd and there is a unit in $U_K$ with norm $-1$. 

(2) Let $v$ be the place of $K$ above an inert or ramified odd prime $p$. Then there is a unique quadratic idele class character $\eta$ of $K$ with conductor $v$. It is from base change. 


(3) Let $v$ be a place of $K$ above a prime $p$ split in $K$. Then there is a unique quadratic idele class character $\eta$ of $K$ with conductor equal to $v$ for $p$ odd, and $v^2$ for $p=2$. Further, for $p=2$, there are two quadratic idele class characters $\eta$ of $K$ with conductor $v^3$, determined by $\eta_v(-1) = \pm 1$ and $\eta_v(1 + \pi_v^2) = -1$. None of these characters $\eta$ are from base change, but all $\eta\eta^c$ are.

(4) Let $v$ be the place of $K$ above $2$ which is inert in $K$. Then there are three quadratic idele class character $\eta$ of $K$ with conductor $v^2$. Exactly one of them, satisfying $\eta_v(-1) = 1$ and $\eta_v(1 + \mu_3 2) = - 1$, is from base change. There are four quadratic idele class characters $\eta$ of $K$ with conductor  $v^3$. Exactly two of them, satisfying $\eta_v(-1) = 1$, $\eta_v(1 + \mu_3 2) = \pm 1$ and $\eta_v(1 - \mu_3 2^2)=-1$, are from base change.

(5) Let $v$ be the place of $K = \Q(\sqrt {2})$ above $2$. There is one quadratic idele class character $\eta(2)$ of $K$ of conductor $v^2$, satisfying $\eta_v(1 + \pi_v) = -1$, and it is from base change. 
Up to multiplication by $\eta(2)$, there is one quadratic idele class character $\eta$ of $K$ with conductor $v^4$, satisfying $\eta_v(1 + \pi_v) = 1$ and $\eta_v(1 + \pi_v^3) = -1$, and it is not from base change. Up to multiplication by $\eta(2)$, there are two quadratic idele class characters $\eta$ of $K$ with conductor  $v^5$. 
Exactly one of them, satisfying $\eta_v(1 + \pi_v) = 1$, $\eta_v(1+ \pi_v^3)=-1$ and $\eta_v(1 + \pi_v^4) = -1$, is from base change. Here $\pi_v = \sqrt {2}$. 

\end{prop}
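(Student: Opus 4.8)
The plan is to run the same argument as in the proof of Proposition~\ref{reducing conductor-Kimaginary}, but taking advantage of a feature special to the fields in the hypothesis. Since $d=2$ or $d\equiv 1\bmod 4$ is prime, $K$ contains a fundamental unit $\epsilon_K$ with $N_{K/\Q}(\epsilon_K)=-1$, so by Proposition~\ref{constraint-on-xi}(iii) the archimedean part $\chi_\infty$ of an idele class character is determined by $\xi(-1)$ and $\xi(\epsilon_K)$, and the hypothesis $\xi(U_K)\subseteq\langle-1\rangle$ of Corollary~\ref{quadratic}(b) is automatic for a \emph{quadratic} primitive character $\xi$ of $(\Z_K/\fa)^\times$. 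Consequently --- and this is why the counts here exceed those in the imaginary case, where one needs the more restrictive $\xi(-1)=1$ --- \emph{every} primitive quadratic character $\xi$ of $(\Z_K/\fa)^\times$ lifts, by Corollary~\ref{quadratic}, to a unique quadratic idele class character $\chi$ of $K$ with conductor $\ff_\chi=\fa$. Thus (2)--(5) reduce to: enumerating primitive quadratic characters of the relevant finite abelian groups $\mathcal U_v/(1+\mathcal M_v^{m(v)})$, and then deciding which of the resulting $\chi$ satisfy $\chi=\chi^c$, which by Proposition~\ref{chi=chic} and Theorem~\ref{quadratic chi} is precisely the base-change condition.

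For (1) I would quote Gauss's genus theory (\cite[Chapter~XIII]{Cohn}): $d$ is the unique prime dividing the discriminant of $K$, so the narrow class group has $2$-rank $0$; since $N_{K/\Q}(\epsilon_K)=-1$ forces $h(K)=h^+(K)$, the class number $h(K)$ is odd. For (2), by Proposition~\ref{conductor}(i) a place $v$ above an odd inert or ramified prime $p$ has conductor exponent $1$ with $\mathcal U_v/(1+\mathcal M_v)\cong\kappa_v^\times$ cyclic of even order, so there is a unique primitive quadratic $\xi_v$; as $v$ is the only place of $K$ above $p$, its conjugate $\xi_v^c$ has the same conductor and order, hence $\xi_v^c=\xi_v$ and the resulting $\eta$ is from base change. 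For (3), at a split prime $p$ one builds $\eta$ from a primitive quadratic $\xi_v$ on $\mathcal U_v/(1+\mathcal M_v^{m})$, leaving $\eta$ unramified at $v^c$; by Proposition~\ref{conductor}(i),(iia) the admissible exponents are $m=1$ ($p$ odd) and $m\in\{2,3\}$ ($p=2$), with $\mathcal U_v/(1+\mathcal M_v^2)=\langle-1\rangle$ cyclic and $\mathcal U_v/(1+\mathcal M_v^3)=\langle-1\rangle\times\langle1+\pi_v^2\rangle$ a Klein four-group, giving the stated counts; since $v\mid\ff_\eta$ but $v^c\nmid\ff_\eta$ we have $\eta\neq\eta^c$, while $\eta\eta^c$ has conductor $(vv^c)^m$ and equals $(\eta\eta^c)^c$, hence is from base change.

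For (4) and (5) I would use the explicit presentations of $\mathcal U_v/(1+\mathcal M_v^{m})$ from Proposition~\ref{conductor}(iib) and (iic), together with the $c$-action on the listed generators recorded there ($(1+\mu_3 2)^c=-(1+\mu_3 2)$, $(1-\mu_3 2^2)^c=1-\mu_3^2 2^2$ in the inert case; $\pi_v^c=-\pi_v$ for $K=\Q(\sqrt2)$). For each admissible exponent $m(v)$ I would: (a) pass to the quadratic quotient and impose primitivity to count the characters; (b) express $-1$, and the $c$-translates of the generators, in terms of the chosen generators modulo $1+\mathcal M_v^{m(v)}$, using relations such as $(1+\mu_3 2)^2=1-2^2$ (inert case) and, for $\Q(\sqrt2)$, $(1+\pi_v)^2=1+\pi_v^2+\pi_v^3$ together with $(1+\pi_v)^2\equiv(1+\pi_v^2)(1+\pi_v^3)$; (c) read off which $\xi_v$ satisfy $\xi_v^c=\xi_v$, and for $\Q(\sqrt2)$ divide out by the conductor-$v^2$ character $\eta(2)$ to normalise $\eta_v(1+\pi_v)$. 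The place where $d=2$ or $d\equiv1\bmod4$ really enters is that now $\xi(-1)=-1$ is permitted, which is exactly why $\Q(\sqrt2)$ carries characters of conductor $v^4$ and why the inert prime above $2$ yields three, respectively four, characters of conductor $v^2$, respectively $v^3$.

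I expect the main obstacle to be the $2$-adic bookkeeping in (5) at conductor $v^5$ for $\Q(\sqrt2)$: one works in $\mathcal U_v/(1+\mathcal M_v^5)\cong\Z/4\Z\times\Z/2\Z\times\Z/2\Z$, must correctly locate $-1$ there (it equals $(1+\pi_v)^{-2}(1+\pi_v^3)(1+\pi_v^4)^{-1}$), and then use $(1+\pi_v)^c=1-\pi_v=-\,(1+\pi_v)^{-1}$ to conclude that $\eta^c=\eta$ forces $\eta_v(1+\pi_v^3)=-1$; a sign slip here flips which of the two characters is declared to come from base change. The remaining steps --- trivially extending each $\chi_v$ over $\langle\mu_3\rangle$ in the inert case, checking consistency of $\chi_\infty$ via Proposition~\ref{constraint-on-xi}(iii), and assembling the global $\chi$ through Corollary~\ref{quadratic} --- are routine.
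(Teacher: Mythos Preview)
Your proposal is correct and follows essentially the same route as the paper: both isolate the key simplification that, for these real quadratic fields, every primitive quadratic $\xi$ on $(\Z_K/\fa)^\times$ automatically satisfies $\xi(U_K)\subseteq\{\pm1\}$ and hence lifts uniquely by Corollary~\ref{quadratic}, after which the task reduces to enumerating primitive quadratic characters of $\mathcal U_v/(1+\mathcal M_v^{m(v)})$ via Proposition~\ref{conductor} and testing $\eta_v=\eta_v^c$ on the listed generators. The only cosmetic difference is in the $v^5$ bookkeeping for $\Q(\sqrt 2)$: the paper expands $1-\pi_v=1+\pi_v-\pi_v^3$ additively and factors it as $(1+\pi_v)(1-\pi_v^3)(1+\pi_v^4)$ modulo $1+\mathcal M_v^5$, whereas you use the multiplicative identity $(1+\pi_v)^c=-(1+\pi_v)^{-1}$ together with your expression for $-1$; both lead to the same conclusion that $\eta^c=\eta$ forces $\eta_v(1+\pi_v^3)=-1$.
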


\begin{proof} (1) is already discussed in the paragraph before the proposition. 
In view of Corollary \ref{quadratic} and Theorem \ref{extension}, for our $K$, any quadratic character on the group of units in $I_K^\infty$ has a unique extension to a quadratic idele class character of $I_K$. This makes the proof of the remaining statements simpler than their counterparts in Proposition \ref{reducing conductor-Kimaginary} because there are no constraints for the character on $U_K$. 

(2) For $v$ being the unique place of $K$ above an inert or ramified odd prime $p$, there is a unique quadratic idele class character $\eta$ of $K$ with conductor $v$. Since $c$ stabilizes $v$, we have $\eta = \eta^c$. 

(3) For $v$ a place of $K$ above a split prime $p$, the existence of quadratic idele class characters $\eta$ of $K$ as stated follows from Proposition \ref{conductor} (i) and (iia). As $v^c \ne v$, $\eta$ is not from base change, but the character $\eta \eta^c$ is. 

(4) For $v$ the only place above a split prime $2$, Proposition \ref{conductor} (iib) describes the three (resp. four) quadratic idele class characters of $K$ of conductor $v^2$ (resp. $v^3$). The same argument as in the proof of Proposition \ref{reducing conductor-Kimaginary} (4) shows that $\eta = \eta^c$ if and only if $\eta_v(-1) = 1$ in both cases. 

(5) Choose $\pi_v = \sqrt 2$ so that $\pi_v^c = -\pi_v = \pi_v - \pi_v^3$. By Proposition \ref{conductor} (iic), there is one quadratic idele class character $\eta(2)$ of $K$ of conductor $v^2$ determined by $\eta(2)_v(1 + \pi_v) = -1$. Since $\eta(2)_v^c(1 + \pi_v) = \eta(2)_v(1 - \pi_v) = \eta(2)_v(1 + \pi_v)$, we have $\eta(2) = \eta(2)^c$, hence is from base change. 

 Up to multiplication by $\eta(2)$, for quadratic idele class characters $\eta$ of $K$ of conductor $v^r$ with $r \ge 3$, we may assume $\eta_v(1 + \pi_v) = 1$. By Proposition \ref{conductor} (iic), there is one quadratic character $\eta$ of $K$ with conductor  $v^4$, satisfying $\eta_v(1 + \pi_v^3) = -1$ and $\eta_v(1 + \pi_v) = 1$. We show that $\eta \ne \eta^c$ so that $\eta$ is not a base change from $\Q$. This is because $\eta_v^c(1 + \pi_v^3)= \eta_v(1 - \pi_v^3) = \eta_v(1 + \pi_v^3)$ since $\eta_v$ has conductor $v^4$, and $\eta_v^c(1 +\pi_v) = \eta_v(1-\pi_v) = \eta_v(1 + \pi_v - \pi_v^3) = \eta_v(1 + \pi_v)\eta_v(1 - \pi_v^3) = -\eta_v(1 + \pi_v)$. 

Similarly, by Proposition \ref{conductor} (iic), up to multiplication by $\eta(2)$, there are two quadratic characters $\eta$ of $K$ with conductor  $v^5$, satisfying $\eta_v(1 + \pi_v) = 1$, $\eta_v(1 + \pi_v^4) = -1$ and $\eta(1 + \pi_v^3) = \pm 1$. We check $\eta^c$. Indeed, we have $\eta_v^c(1 + \pi_v^4)= \eta_v(1 + \pi_v^4)$, $\eta_v^c(1 + \pi_v^3)= \eta_v(1 - \pi_v^3) = \eta_v(1 + \pi_v^3)$, and $\eta_v^c(1 +\pi_v) = \eta_v(1-\pi_v) = \eta_v(1 + \pi_v - \pi_v^3) = \eta_v((1 + \pi_v)(1 - \pi_v^3)(1 + \pi_v^4)) = -\eta_v(1 + \pi_v)\eta_v(1 + \pi_v^3)$, which is equal to $\eta_v(1 + \pi_v) = 1$ if and only if $\eta_v(1 + \pi_v^3) = -1$. 
\end{proof}

The conclusion below follows immediately from Proposition \ref{reduction of conductor-Kreal} above by an argument similar to the proof of Corollary \ref{2nonsplit}.

\begin{cor}\label{2nonsplit-Kreal} Let $K$ be as in Proposition \ref{reduction of conductor-Kreal}. Let $v$ be a place of $K$ above $2$. Let $\chi$ be a quadratic idele class character of $K$ with conductor $\ff$. Then up to multiplication by a quadratic idele class character of $K$ from base change with conductor supported at $v$, the following hold: 

(i) If $2$ is inert in $K$, then $\ord_v \ff = 0$ or $2$. In the latter case $\chi_v$ satisfies $\chi_v(-1) = -1$ and $\chi_v(1 + \mu_3 2) = 1$, hence is unique;

(ii) If $2$ ramifies in $K$, then $\ord_v \ff = 0$ or $4$. In the latter case, $\chi_v$ satisfies $\chi_v(1 + \pi_v) = 1$ and $\chi_v(1 + \pi_v^3)=-1$, hence is unique. 
\end{cor}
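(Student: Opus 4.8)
The plan is to reuse the argument of Corollary~\ref{2nonsplit} essentially verbatim, replacing the input Proposition~\ref{reducing conductor-Kimaginary} by Proposition~\ref{reduction of conductor-Kreal}. Fix the place $v$ above $2$ and a quadratic idele class character $\chi$ of $K$ with conductor $\ff$, and suppose $\chi$ is ramified at $v$. By Proposition~\ref{conductor}(ii), $\ord_v \ff$ is $2$ or $3$ when $2$ is inert in $K$, and $2$, $4$, or $5$ when $2$ ramifies in $K$. The task is to exhibit, in each case, a quadratic idele class character of $K$ coming from base change and with conductor a power of $v$, such that after multiplying $\chi$ by it one lands in the asserted normal form; the base change characters available for this are precisely those listed in Proposition~\ref{reduction of conductor-Kreal}(4) for the inert case and (5) for the ramified case (where $K=\Q(\sqrt 2)$).

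Consider (i), $2$ inert. If $\ord_v \ff = 3$, then $\chi_v$ on $\mathcal U_v$ is a quadratic character of conductor $v^3$, so $\chi_v(1-\mu_3 2^2)=-1$ and $\chi_v(1+\mu_3 2)=\pm 1$ by Proposition~\ref{conductor}(iib). If $\chi_v(-1)=1$, then $\chi_v$ agrees on $\mathcal U_v$ with one of the two base change characters of conductor $v^3$ in Proposition~\ref{reduction of conductor-Kreal}(4), and multiplying $\chi$ by it makes $\chi$ unramified at $v$. If $\chi_v(-1)=-1$, then multiplying $\chi$ by the base change character of conductor $v^3$ having the same value on $1+\mu_3 2$ reduces to the case $\ord_v \ff = 2$ with $\chi_v(-1)=-1$ and $\chi_v(1+\mu_3 2)=1$. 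If $\ord_v \ff = 2$ to begin with, the same dichotomy applies with the unique base change character of conductor $v^2$: it clears the ramification at $v$ when $\chi_v(-1)=1$, and when $\chi_v(-1)=-1$ one multiplies $\chi$ by it if necessary to force $\chi_v(1+\mu_3 2)=1$. In every case one reaches $\ord_v \ff \in\{0,2\}$, and in the exponent-$2$ case the description of $\mathcal U_v/(1+\mathcal M_v^2)$ in Proposition~\ref{conductor}(iib) shows that $\chi_v$ on $\mathcal U_v$ is then uniquely determined.

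Now (ii), $2$ ramified, so $K=\Q(\sqrt 2)$. If $\ord_v \ff = 2$, then $\chi_v$ is the unique quadratic character of $\mathcal U_v$ of conductor $v^2$, namely $\eta(2)_v$, and multiplying $\chi$ by the base change character $\eta(2)$ of Proposition~\ref{reduction of conductor-Kreal}(5) makes $\chi$ unramified at $v$. If $\ord_v \ff = 5$, first multiply $\chi$ by $\eta(2)$ if necessary to arrange $\chi_v(1+\pi_v)=1$; then $\chi_v(1+\pi_v^4)=-1$ and $\chi_v(1+\pi_v^3)=\pm 1$ by Proposition~\ref{conductor}(iic). If $\chi_v(1+\pi_v^3)=-1$, then $\chi_v$ agrees on $\mathcal U_v$ with the base change character of conductor $v^5$ and multiplying by it kills the ramification at $v$; if $\chi_v(1+\pi_v^3)=1$, then multiplying $\chi$ by that same conductor-$v^5$ character reduces to the case $\ord_v \ff = 4$ with $\chi_v(1+\pi_v)=1$ and $\chi_v(1+\pi_v^3)=-1$. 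If $\ord_v \ff = 4$ to begin with, multiplying $\chi$ by $\eta(2)$ if necessary gives $\chi_v(1+\pi_v)=1$, and then $\chi_v(1+\pi_v^3)=-1$ is forced by primitivity. So $\ord_v \ff \in\{0,4\}$, and in the exponent-$4$ case Proposition~\ref{conductor}(iic) shows $\chi_v$ is uniquely determined.

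I do not foresee a genuine obstacle: the argument is the bookkeeping of the values of $\chi_v$ on the explicit generators of $\mathcal U_v/(1+\mathcal M_v^{m(v)})$ furnished by Proposition~\ref{conductor}, together with the remark that every character we multiply by is, by its construction in Proposition~\ref{reduction of conductor-Kreal}, a base change character with conductor a power of $v$. The one feature that differs from Corollary~\ref{2nonsplit} is that in the ramified case the conductor exponent $5$ does not persist; this relies on the existence of a base change character of conductor $v^5$ for $K=\Q(\sqrt 2)$ supplied by Proposition~\ref{reduction of conductor-Kreal}(5), in contrast to $\Q(\sqrt{-2})$, where Proposition~\ref{reducing conductor-Kimaginary}(5) shows no such character exists.
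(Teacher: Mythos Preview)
Your proposal is correct and follows exactly the approach the paper indicates: the paper's proof is the single sentence that the result ``follows immediately from Proposition~\ref{reduction of conductor-Kreal} above by an argument similar to the proof of Corollary~\ref{2nonsplit},'' and what you have written is precisely that argument spelled out, with the correct substitution of the base-change characters from Proposition~\ref{reduction of conductor-Kreal}(4),(5) in place of those from Proposition~\ref{reducing conductor-Kimaginary}. Your closing remark pinpointing why exponent~$5$ disappears in the real ramified case (existence of a base-change character of conductor $v^5$ for $\Q(\sqrt 2)$, absent for $\Q(\sqrt{-2})$) is the key structural difference and is handled correctly.
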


Now we are ready to state the classification of quadratic idele class characters for real quadratic fields parallel to its counterpart Theorem \ref{2-classification} for imaginary quadratic fields. The proof is similar and hence is omitted.

\begin{thm}\label{2-classification-Kreal} Let $K = \Q(\sqrt d)$ where $d=2$ or a prime $\equiv 1 \mod 4$ be a real quadratic extension of $\Q$. For each prime $p$ split in $K$, choose one place $v$ of $K$ above $p$ and let $S_K$ be the collection of these chosen places. Then  

(I) Up to multiplication by characters from base change, a quadratic idele class character $\chi$ of $K$ with conductor $\ff$ satisfies the following conditions:

(A)$_r$ No places above an odd inert or ramified prime divide $\ff$;


(C)$_r$ If a place $v$ above a split prime $p$ divides $\ff$ to the power $m(v)$, then $v \in S_K$ and $m(v) = 1$ for $p$ odd, and $m(v) = 2$ or $3$ for $p=2$; 

(D)$_r$ If a place $v$ above $2$ divides $\ff$ to the power $m(v)$ and $2$ does not split in $K$, then $m(v) = 4$ and $\chi_v(1 + \pi_v) = 1$ and $\chi_v(1 + \pi_v^3) = -1$ if $2$ ramifies in $K$, and $m(v) = 2$ and $\chi_v(-1) = -1$ and $\chi_v(1 + \mu_3 2) = 1$ if $2$ is inert in $K$. 

(II) Any quadratic idele class character $\chi$ of $K$ with nontrivial conductor $\ff$ satisfying (A)$_r$, (C)$_r$, (D)$_r$ does not arise from base change. In other words, $\chi^c = \chi \cdot \delta$ for some quadratic idele class character $\delta$ of $K$.

(III) No two distinct quadratic idele class characters of $K$ satisfying (A)$_r$, (C)$_r$, (D)$_r$ differ by multiplication by an idele class character of $K$ from base change.

(IV) Given an integral ideal $\ff = \prod_{v~{\rm finite}} v^{m(v)}$ of $K$ with $m(v)$ satisfying (A)$_r$, (C)$_r$ and (D)$_r$, there is a quadratic idele class character $\chi$ of $K$ with conductor $\ff$ so that the conditions (A)$_r$, (C)$_r$ and (D)$_r$ hold. 
\end{thm}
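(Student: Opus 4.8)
The plan is to transcribe the proof of Theorem~\ref{2-classification} to the real quadratic setting, with Proposition~\ref{conductor}, Proposition~\ref{reduction of conductor-Kreal}, and Corollary~\ref{2nonsplit-Kreal} replacing the ingredients Proposition~\ref{conductor}, Proposition~\ref{reducing conductor-Kimaginary}, and Corollary~\ref{2nonsplit} used there. For part (I), I would start from an arbitrary quadratic idele class character $\chi$ of $K$ with conductor $\ff$. Proposition~\ref{conductor} already pins down, at each $v\mid\ff$, the exponent $m(v)=1$ when $v$ lies over an odd prime, $m(v)\in\{2,3\}$ when $v$ lies over a split or inert $2$, and $m(v)\in\{2,4,5\}$ when $2$ ramifies. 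Then I would multiply $\chi$ by suitable quadratic idele class characters from base change, supplied by Proposition~\ref{reduction of conductor-Kreal}: part (2) strips the places over inert and ramified odd primes, yielding (A)$_r$; part (3) either removes both places over a split odd prime or leaves $\chi$ ramified at the chosen place $v\in S_K$ with $m(v)=1$, and at a split $2$ forces $m(v)\in\{2,3\}$, yielding (C)$_r$; parts (4) and (5), packaged in Corollary~\ref{2nonsplit-Kreal}, normalize the exponent and local component at the non-split place over $2$ to the form in (D)$_r$. This gives (I).

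For (II), I would use Proposition~\ref{chi=chic}: a quadratic idele class character is a base change exactly when it is $c$-invariant, so it suffices to show $\chi\ne\chi^c$ for $\chi$ as in (I) with $\ff\ne(1)$, and then the formulation $\chi^c=\chi\cdot\delta$ follows from Theorem~\ref{quadratic chi}. If some $v\in S_K$ over a split prime divides $\ff$, then $v^c\notin S_K$, hence $v^c\nmid\ff_\chi$ while $v^c$ divides $\ff_{\chi^c}$, so $\chi\ne\chi^c$. Otherwise (A)$_r$ and (C)$_r$ leave $\ff$ a power of the unique place $v$ over a non-split $2$: if $2$ is inert, (D)$_r$ gives $m(v)=2$, $\chi_v(-1)=-1$, $\chi_v(1+\mu_3 2)=1$, and $(1+\mu_3 2)^c=-(1+\mu_3 2)$ from Proposition~\ref{conductor}(iib) yields $\chi_v^c(1+\mu_3 2)=\chi_v(-1)\chi_v(1+\mu_3 2)=-1\ne\chi_v(1+\mu_3 2)$; if $2$ ramifies (so $K=\Q(\sqrt 2)$), (D)$_r$ gives $m(v)=4$ with $\chi_v(1+\pi_v)=1$, and the identity $\chi_v^c(1+\pi_v)=-\chi_v(1+\pi_v)$ from the proof of Proposition~\ref{reduction of conductor-Kreal}(5) gives $\chi_v^c\ne\chi_v$. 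Either way $\chi\ne\chi^c$.

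For (III), if two distinct characters $\chi,\eta$ satisfying (A)$_r$, (C)$_r$, (D)$_r$ differed by a base-change character, then $\delta:=\chi\eta^{-1}=\chi\eta$ would be a quadratic base-change character with nontrivial conductor dividing $\lcm(\ff_\chi,\ff_\eta)$, visibly obeying (A)$_r$ and (C)$_r$; I would check (D)$_r$ at the non-split place over $2$ by noting that if exactly one of $\chi,\eta$ ramifies there then $\delta_v$ equals it on $\mathcal U_v$, and if both ramify then (D)$_r$ forces them to agree on $\mathcal U_v$ so $\delta_v$ is unramified there — either way $\delta$ lies in the family of (II), contradicting that it is a base change. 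For (IV), given admissible $\ff$, I would choose for each $v\mid\ff$ a quadratic character $\chi_v$ of $\mathcal U_v$ of conductor $v^{m(v)}$ satisfying (D)$_r$ at places over $2$; such a $\chi_v$ exists by Proposition~\ref{conductor} and is unique except at a split $2$ with $m(v)=3$, where both choices work. Then $\prod_{v\mid\ff}\chi_v$ lifts a primitive quadratic character $\xi$ of $(\Z_K/\ff)^\times$ with $\xi(U_K)\subseteq\langle-1\rangle$ automatic, and Corollary~\ref{quadratic}(b) produces a quadratic idele class character $\chi$ of conductor $\ff$ lifting $\xi$, which satisfies (A)$_r$, (C)$_r$, (D)$_r$ by construction. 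This is precisely where the real case is simpler than the imaginary one: a unit of norm $-1$ makes the condition on $\xi(U_K)$ vacuous, so there is no parity restriction on $\ff$.

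The only genuine work I anticipate is the $2$-adic bookkeeping in (II) and (III): carrying along, via Proposition~\ref{conductor}, the explicit generators of $\mathcal U_v/(1+\mathcal M_v^{m(v)})$ for $v$ over $2$ together with the action of the Galois involution $c$ on them, so as to certify $\chi^c\ne\chi$ and that products of admissible characters stay admissible at the prime over $2$. The rest is a routine transcription of the imaginary case.
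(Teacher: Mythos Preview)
Your proposal is correct and follows exactly the approach the paper intends: the paper's own proof reads in full ``The proof is similar and hence is omitted,'' together with the remark that (IV) is simpler because $\xi(U_K)\subseteq\{\pm 1\}$ is automatic for quadratic $\xi$, which is precisely the point you isolate at the end. Your detailed transcription of the argument of Theorem~\ref{2-classification}, with Proposition~\ref{reduction of conductor-Kreal} and Corollary~\ref{2nonsplit-Kreal} substituted for their imaginary analogues and the $2$-adic checks in (II) and (III) carried out via Proposition~\ref{conductor}(iib) and the computations in the proof of Proposition~\ref{reduction of conductor-Kreal}(5), is exactly what the omitted proof would contain.
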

  Statement (IV) above is simpler than its counterpart because in the parallel construction, the character $\xi$ is quadratic so that $\xi(U_K)\subset \{\pm 1\}$ automatically holds and Corollary \ref{quadratic} applies.

\section{Examples of arithmetically equivalent pairs with quadratic characters }\label{sec:Dihedral forms} 
In this section we construct two families of examples of arithmetically equivalent pairs with quadratic characters, which give rise to families of holomorphic weight one cusp forms and Maass cusp forms arising from characters of two different fields, respectively. 

\subsection{Holomorphic weight one cusp forms arising from two different fields}\label{sec:holomorphic forms}

Let $K = \Q(\sqrt{-1})$. Denote by $c$ the complex conjugation so that $\Gal(K/\Q) = \langle c \rangle$. Let $q$ be a prime $\equiv 1 \mod 8$, then $q$ splits in $K$. Let $Q$ be a place of $K$ above $q$ with residue field $\kappa_Q \simeq \mathbb Z_K/Q \simeq \mathbb Z/(q)$. Since $|\kappa_Q^\times| = q-1$ is divisible by $8$, $\sqrt{-1}$ is a square in $\kappa_Q$. The quadratic character $\xi$ of $(\mathbb Z_K/Q)^\times$ is trivial on the group of global units $U_K = \langle \sqrt{-1} \rangle$. By Corollary \ref{quadratic}, there is a unique quadratic idele class character $\chi$ of $K$ with conductor $\ff_\chi = Q$ lifting $\xi$. Then $\chi \ne \chi^c$ and hence is not from base change. 

We take a closer look at $\chi_v(\pi_v)$ for $v \ne Q$. Since $K$ has class number $1$, every maximal ideal $v$ in $\mathbb Z_K$ is principal, that is, $v = (\pi_v)$ for some element $\pi_v \in \mathbb Z_K$. For $v \ne Q$, the value $\chi_v(\pi_v)$ is given by $\xi$ evaluated at $\pi_v \mod Q$. Specifically, the image of a prime number $p \ne q$ in $\mathbb Z[i]/Q$ is the same as its image in $\mathbb Z/(q)$. So $p$ is a quadratic residue in $\kappa_Q$ if and only if $p$ is a quadratic residue in $\mathbb Z/(q)$, i.e., the Legendre symbol $(\frac{p}{q}) = 1$, which holds if and only if $p$ is a quadratic residue in $\kappa_{Q^c}$. 

If $v$ is above $p \equiv 3 \mod 4$, then 
$$\chi_v(\pi_v) = (\chi^c)_v(\pi_v) = (\frac{p}{q})= (\frac{q}{p})$$
 since $q \equiv 1 \mod 8$. If $v$ is above $p \equiv 1 \mod 4$, then $(p) = (\pi_v)(\pi_v)^c = (\pi_v)(\pi_{v^c})$, so $$\chi_v(\pi_v)(\chi^c)_v(\pi_v) = \chi_v(\pi_v)\chi_{v^c}(\pi_{v^c})= (\frac{p}{q})= (\frac{q}{p}).$$ We have shown that $\chi^c = \chi \cdot \delta_{KM/K}$ for $M = \Q(\sqrt q)$. By Theorem \ref{arithequiv}, there is an idele class character $\eta$ of $M$, not self-conjugate, such that $\rho_\chi :=\Ind_K^{\Q} \chi = \Ind_M^{\Q} \eta =: \rho_\eta$ and hence
 $$L(s, \chi, K) = L(s, \eta, M).$$
 
The above computation gives the following description of $L(s, \chi, K)$ for $\Re(s)>1$:

\begin{eqnarray*}
L(s, \chi, K) &=& \frac{1}{1 - \chi_{(1-i)}(\pi_{1-i})2^{-s}} \cdot \frac{1}{1 - \chi_{Q^c}(\pi_{Q^c})q^{-s}} \times \\
&~&\prod_{p \equiv 1 \mod 4,~(\frac{p}{q})=1,~{\rm any}~v|p}\frac{1}{(1 - \chi_v(\pi_v)p^{-s})^2}\times \\
&~&\times \prod_{p \equiv 1 \mod 4, ~(\frac{p}{q})=-1}\frac{1}{1 + (\frac{q}{p})p^{-2s}}\prod_{p \equiv 3 \mod 4}\frac{1}{1 - (\frac{q}{p})p^{-2s}}\\
&=& \sum_{n \ge 1} a_\chi(n) n^{-s}.
\end{eqnarray*}

The representation $\rho_\chi$ has conductor $4q$ (where $4$ comes from the discriminant of $K$ over $\Q$ and $q$ comes from the norm of the conductor of $\chi$). The field $M$ has odd class number and a fundamental unit with norm $-1$. It follows from $q \equiv 1 \mod 8$ that $2$ splits in $M$. In order that  $\rho_\eta$ has conductor $4q$, and $\eta$ not self-conjugate, $\eta$ has conductor $\ff_{\eta} = R^2$ for a place $R$ of $M$ above $2$. By Corollary \ref{quadratic}, $\eta$ is the unique quadratic idele class character of $M$ lifting the unique quadratic character of $(\mathbb Z_M/R^2)^\times$. At $R$, $\eta_R$ takes value $-1$ (resp. $1$) on units congruent to $-1$ (resp. $1$) mod $R^2$. In particular $\eta_R(-1) = -1$, which implies that the two local components of $\eta$ at the two real places of $M$ take opposite values at $-1$. This shows that $\rho_\eta = \rho_\chi$ is odd. Therefore the cusp form $g_\chi=g_\eta$ with $L(s, g_\chi) = L(s, \chi, K)$ is a holomorphic weight $1$ cusp form with level $4q$ and character $(\frac{-4q}{\cdot})$; its Fourier expansion is $$g_\chi(z) = \sum_{n \ge 1}a_\chi(n) e^{2\pi i nz}.$$ 

As $q$ varies, this gives a family of examples of holomorphic weight $1$ cusp forms arising from idele class characters of two different quadratic fields.

\bigskip
\subsection{Maass cusp forms arising from two different fields}

Consider two real quadratic fields $K=\mathbb Q(\sqrt t)$ and $M = \mathbb Q(\sqrt q)$, where  $t $ and $q$ are two distinct primes $\equiv 1 \mod 4$ such that the Legendre symbol $(\frac{q}{t}) = (\frac{t}{q}) = 1$. Then $K$ has odd class number, and it has a fundamental unit $\epsilon_K$ with norm $-1$. The same holds for $M$. Denote by $\sigma$ the generator of the Galois group $\Gal(K/\mathbb Q)$ and $\tau$ that of $\Gal(M/\Q)$. By choice, $q$ splits in $K$, that is, $(q) = Q Q^{\sigma}$ in $\mathbb Z_K$ and $t$ splits in $M$, namely $(t) = T T^\tau$ in $\mathbb Z_M$.  

By Corollary \ref{quadratic} there is a unique quadratic idele class character $\chi$ of $K$ with conductor $Q$. It lifts the quadratic character of $(\mathbb Z_K/Q)^\times \cong (\mathbb Z/q\mathbb Z)^\times$. Further, since $\chi_Q(-1)=1$, the local components of $\chi$ at the two infinite places $\infty_1$ and $\infty_2$ of $K$ agree, hence the induced representation $\rho_\chi :=\Ind_{G_K}^{G_{\Q}} \chi$ is even. In fact, at the complex conjugation $c$ in $G_\Q$, we have $\rho_\chi(c) = \pm Id$ with the sign given by the value $\chi_{\infty_1}(-1)=\chi_{\infty_2}(-1)=\chi_Q(\epsilon_K)$, where $\epsilon_K$ is a fundamental unit of $K$ with norm $-1$. Therefore, the sign is $+$ if and only if $\epsilon_K$ is a square in the residue field $\mathbb Z_K/Q$ at $Q$.

We compare $\chi$ and its conjugate $\chi^{\sigma}$. 

\begin{proposition} $\chi \chi^{\sigma} = \delta_{KM/K}$.
\end{proposition}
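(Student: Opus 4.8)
The plan is to prove the identity of quadratic idele class characters $\chi\chi^{\sigma}=\delta_{KM/K}$ by checking that both sides agree on $\Frob_v$ for all but finitely many places $v$ of $K$, mirroring the computation carried out for $K=\Q(\sqrt{-1})$ in \S\ref{sec:holomorphic forms}. As a preliminary observation, $\chi\chi^{\sigma}$ is self-conjugate because $\sigma^2=1$, so by Proposition \ref{chi=chic} it is the base change of a quadratic Dirichlet character of $\Q$; it is nontrivial since $\chi$ and $\chi^{\sigma}$ have distinct conductors $Q\ne Q^{\sigma}$ (equivalently $\chi\ne\chi^{\sigma}$), and its conductor divides $QQ^{\sigma}=(q)$. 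This already points to $\delta_{M/\Q}\circ N_{K/\Q}=\delta_{KM/K}$, but I would nail it down by the following self-contained Frobenius computation.

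Fix a rational prime $p\notin\{q,t\}$ and a place $v$ of $K$ above $p$ with uniformizer $\pi_v$, and recall from \S3.2 that, $\chi$ being unramified at $v$, one has $(\chi^{\sigma})_v(\pi_v)=\chi_{v^{\sigma}}(\pi_{v^{\sigma}})$ with $\pi_{v^{\sigma}}=\pi_v^{\sigma}$. If $p$ is inert in $K$, take $\pi_v=p$; then $p^{\sigma}=p$, so $(\chi\chi^{\sigma})_v(p)=\chi_v(p)^2=1$ as $\chi$ is quadratic, matching $\delta_{KM/K}(\Frob_v)=(\frac{q}{p})^2=1$ (the residue degree being $2$). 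If $p$ splits in $K$, write $(p)=vv^{\sigma}$ with $p=\pi_v\pi_{v^{\sigma}}$ up to a global unit, and evaluate the relation $\chi(p)=1$ (valid because $\chi$ is trivial on $K^{\times}$) place by place: $\chi$ is trivial at the two real places on the positive rational $p$, takes the value $\xi(p\bmod Q)=(\frac{p}{q})$ at $Q$, is trivial at $Q^{\sigma}$ and at every finite place not lying above $p$ or $q$, and contributes $\chi_v(\pi_v)\chi_{v^{\sigma}}(\pi_{v^{\sigma}})$ at the two places above $p$; this forces $\chi_v(\pi_v)\chi_{v^{\sigma}}(\pi_{v^{\sigma}})=(\frac{p}{q})$, whence $(\chi\chi^{\sigma})_v(\pi_v)=(\frac{p}{q})=(\frac{q}{p})$ by quadratic reciprocity (using $q\equiv 1 \mod 4$), which is exactly $\delta_{KM/K}(\Frob_v)=\delta_{M/\Q}(p)$ since $v$ has residue degree $1$ over $p$.

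Having matched $\chi\chi^{\sigma}$ and $\delta_{KM/K}$ at $\Frob_v$ for all $v$ above a rational prime $p\ne q,t$, a set of density one, I would conclude that $\chi\chi^{\sigma}\cdot\delta_{KM/K}^{-1}$ is a finite order idele class character of $K$ whose associated Galois character is trivial on a density-one set of Frobenius classes, hence trivial by the Chebotarev density theorem; therefore $\chi\chi^{\sigma}=\delta_{KM/K}$. The only delicate point is the bookkeeping---tracking $(\chi^{\sigma})_v$ through $\chi_{v^{\sigma}}$ and checking that the archimedean and the $Q,\,Q^{\sigma}$ contributions to $\chi(p)=1$ are exactly as stated---and I expect no genuine obstacle, the argument being essentially a transcription of the $\Q(\sqrt{-1})$ case to the real quadratic setting.
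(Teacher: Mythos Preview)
Your proof is correct and follows essentially the same strategy as the paper's: both verify that $\chi\chi^{\sigma}$ and $\delta_{KM/K}$ agree on Frobenius elements at almost all places of $K$, treating the inert and split cases separately, and then conclude by Chebotarev. The only cosmetic difference is that the paper routes the split-prime computation through the generators $\beta_v$ of $v^{h(K)}$ from the proof of Theorem~\ref{extension}, whereas you evaluate the product formula $\chi(p)=1$ directly for the rational prime $p\in K^{\times}$; the underlying idea is identical, and your version avoids the class-number bookkeeping.
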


\begin{proof} Denote by $h(K)$ the class number of $K$. Let $v$ be a place of $K$ above a prime $p$. If $p$ splits in $K$, that is, $(p) = v v^{\sigma}$ in $\mathbb Z_K$, we have $(p)^{h(K)} = v^{h(K)}(v^\sigma)^{h(K)} = (\beta_v)(\beta_{v^{\sigma}})$ and $(\beta_{v^{\sigma}})= ((\beta_v)^{\sigma})$. Recall from the  definition of $\chi$ in the proof of Theorem \ref{extension} that, for $p \ne q$, we have
\begin{eqnarray*}
\chi_v(\pi_v)\chi_v^{\sigma}(\pi_v)&=& \chi_v(\beta_v)\chi_v^{\sigma}(\beta_v)=\chi_v(\beta_v)\chi_v((\beta_v)^{\sigma})\\
& =& \chi_v(p)^{h(K)} = \chi_v(p) = (\frac{q}{p}) = \delta_{M/\mathbb Q}(p)=\delta_{KM/K}(\pi_v)
\end{eqnarray*} since $p$ splits completely in $K$ and is unramified in $M$, so it splits completely in $KM$ (i.e. $v$ splits in $KM$) if and only if $p$ splits in $M$. If $v$ is inert in $K$, then $v=(p)$ and
$$\chi_v(\pi_v)\chi_v^{\sigma}(\pi_v)=\chi_v(p)^2=1=\delta_{KM/K}(\pi_v)$$ since such $v$ is unramified in $KM$ and its residue field already has $p^2$ elements, it has to split in $KM$ or else $KM$ would have a place with $p^4$ elements in its residue field, which is impossible because $Gal(KM/\mathbb Q)$ is a Klein $4$ group. Now both $\chi \chi^{\sigma}$ and $\delta_{KM/K}$ are idele class characters of $K$ which agree at all but finitely many places, they agree. 
\end{proof}

It then follows from the proof of Theorem \ref{arithequiv} that there is an idele class character $\eta$ of $M$ such that $L(s, \chi, K) = L(s, \eta, M)$. This in turn implies that $\eta$ is quadratic and ramified exactly at one of the places above $t$, say, $T$, and it has conductor $T$. Thus $\eta$ is unique by Corollary \ref{quadratic} and it is not self-conjugate. 

Let $g = g_\chi = g_\eta$ be the Maass cusp form with associated L-function $L(s, g) = L(s, \chi, K) = L(s, \eta, M) = \sum_{n \ge 1}a_\chi(n)n^{-s}$. There are two possible explicit Fourier expansions for $g(z)$, depending on the sign of $\rho_\chi(c) = \pm Id$ given by $\chi_Q(\epsilon_K)$:   
\[
g_\chi(z) = \sum_{\mathfrak a} \chi(\mathfrak a) \sqrt{y}K_0(2\pi N(\mathfrak a) y) 2 \begin{cases} \cos(2\pi N(\mathfrak a) x),& {\rm if~} \rho_\chi(c) = Id; \\ \sin(2\pi N(\mathfrak a)x),& {\rm if~} \rho_\chi(c) = -Id ,\end{cases} 
\]
 where  $K_0$ is the $K$-Bessel function (see \cite[Theorem 1.9.1]{Bump}). 
We see from the two examples below that both signs can occur.
\bigskip

\noindent Example 1. $t=5$ and $q = 29$ so that $K = \Q(\sqrt 5)$, $M = \Q(\sqrt {29})$ and $(\frac{29}{5})=1$. We know that $\mathbb Z_K$ has class number $1$ (in fact, it is a Euclidean domain) and  
\[
\epsilon_K = \frac {1+\sqrt{5}}2
\]
is a fundamental unit of norm $-1$. The ideal $(29)$ factors as $Q\cdot Q^\sigma$ with 
\[
Q =(7+2\sqrt{5}).
\]
To compute $\chi_{Q}(\epsilon_K)$, we use Euler's criterion: If $Q$ is a prime ideal and $\alpha\in \mathbb Z_K$ is coprime to $Q$, then
\[
\chi_Q(\alpha) \equiv \alpha^{(N(Q)-1)/2} \bmod Q.
\]
As our $Q$ has norm  $N(Q)=29$, we compute by using the Euclidean division algorithm in $\mathbb Z_K$
\[
\epsilon_K^{14} = \frac{843 + 377 \sqrt{5}}2 = (7+2\sqrt{5}) \cdot \Big(20+33 \frac {1+\sqrt{5}}2\Big) +1
\]
so that
\[
\epsilon_K^{(29-1)/2} = +1\bmod Q
\]
which shows that 
\[
\chi_{Q}(\epsilon_K) = +1.
\]
Therefore $\rho_\chi(c) = Id$ and the Maass form $g_\chi$ has Fourier expansion
\[
g_\chi(x+iy) = \sum_{n\ge 1} a_\chi(n) \sqrt{y}K_0(2\pi n y) 2\cos(2\pi nx),
\]
 where $a_\chi(n) = \sum_{ N(\fa) = n} \chi(\fa)$ is the coefficient of $n^{-s}$ in $L(s, \chi, K)$. 

\noindent Example 2. $t=5$ and $q = 41$ so that $K = \Q(\sqrt 5)$, $M = \Q(\sqrt {41})$ and $(\frac{41}{5})=1$. The ideal $(41)$ factors as $Q\cdot Q^{\sigma}$ with
\[
Q =(6+\frac {1+\sqrt{5}}2)
\] and $N(Q)=41$ in this case. Then
\[
\epsilon_K^{(41-1)/2} = \frac{1}{2} \left(15127+6765 \sqrt{5}\right) = (6+\frac {1+\sqrt{5}}2) \cdot (549 +888 \frac {1+\sqrt{5}}2) -1
\]
so that 
\[
\chi_{Q}(\epsilon_K) \equiv \epsilon_K^{(41-1)/2} = -1\bmod Q.
\] Therefore $\rho_\chi(c) = -Id$ and $g_\chi$ is a Maass form with Fourier expansion
\[
g_\chi(x+iy) = \sum_{n\ge 1} a_\chi(n) \sqrt{y}K_0(2\pi n y) 2\sin(2\pi nx).
\]


%
%



\end{document}